\newtheorem{theorem}{Theorem}
\newtheorem{proposition}[theorem]{Proposition}
\newtheorem{lemma}[theorem]{Lemma}
\newtheorem{corollary}[theorem]{Corollary}
\newtheorem*{theorem*}{Theorem}
\newtheorem*{proposition*}{Proposition}
\newtheorem*{lemma*}{Lemma}
\newtheorem*{corollary*}{Corollary}
\newtheorem*{definition*}{Definition}
\newcommand{\bigO}{\mathcal{O}}
\newcommand{\exactbigO}{\Theta}
\newcommand{\smallo}{o}
\newcommand{\integers}{\mathds{Z}}
\newcommand{\reals}{\mathds{R}}
\newcommand{\indic}{\mathds{1}}
\newcommand{\vect}[1]{\bm{#1}}%{\mathbf{#1}}
\newcommand*{\eg}{\textit{e.g.}\@\xspace}
\newcommand{\Real}{\operatorname{Re}}
\renewcommand{\Im}{\operatorname{Im}}
\newcommand{\exphad}{\odot}
\newcommand{\sg}{\operatorname{SG}}
\newcommand{\sgk}{\sg^{(k)}}
\newcommand{\csg}{\operatorname{CSG}}
\newcommand{\csgk}{\csg^{(k)}}
\newcommand{\csgkn}{\csgk_n}
\newcommand{\tsg}{\tilde{\sg}}
\newcommand{\tsgk}{\tilde{\sg}^{(k)}}
\newcommand{\tcsg}{\tilde{\csg}}
\newcommand{\tcsgk}{\tilde{\csg}^{(k)}}
\newcommand{\tA}{\tilde{A}}
\newcommand{\tB}{\tilde{B}}
\newcommand{\tF}{\tilde{F}}
\newcommand{\tS}{\tilde{S}}
\newcommand{\vt}{\vect{t}}
\newcommand{\vx}{\vect{x}}
\newcommand{\vz}{\vect{z}}
\newcommand{\vone}{\vect{1}}
\newcommand{\valpha}{\vect{\alpha}}
\newcommand{\sgkn}{\sg^{(k)}_n}
\begin{document}
\title{Asymptotic expansion of regular and connected regular graphs}
\author{\'Elie de Panafieu}%\\ \url{depanafieuelie@gmail.com}}
\date{Nokia Bell Labs, France\\[0.2cm] \today}
\maketitle

\begin{abstract}
We derive the asymptotic expansion (asymptotics with an arbitrary number of error terms) of $k$-regular graphs by applying the Laplace method on a recent exact formula from Caizergues and de Panafieu (2023). We also deduce the asymptotic expansion of connected $k$-regular graphs using standard techniques for divergent series developed by Wright (1970) and Bender (1975), and quantify its closeness to the asymptotic expansion of $k$-regular graphs.
\end{abstract}

        \section{Introduction}
        \label{sec:introduction}

A graph $G$ is a pair $(V, E)$,
where $V = \{1, \ldots, n(G)\}$
denotes the set of labeled vertices,
and $E$ is the set of unlabeled unoriented edges.
Loops and multiple edges are forbidden.
The \emph{degree} of a vertex is
the number of edges containing it.
Graphs where all vertices have the same degree $k$  
are called \emph{$k$-regular}.
For example $0$-regular graphs are
graphs without any edge,
$1$-regular graphs are sets of
pairs of vertices linked by an edge,
$2$-regular graphs are sets of cycles of length at least $3$,
because cycles of length $1$ are loops,
and cycles of length $2$ are double edges.

Given a formal power series $\tF(z)$,
let us write
\[
    f_n \approx a_n \tF(n^{-1})
\]
if for any $r \geq 0$, we have
\[
    f_n = a_n
    \bigg(
    \sum_{\ell=0}^{r-1} [z^\ell] \tF(z) n^{-\ell}
    + \bigO(n^{-r})
    \bigg).
\]
We then say that $a_n \tF(n^{-1})$
is the \emph{asymptotic expansion} of $f_n$.
Our main result, \cref{th:main:result},
is the asymptotic expansion
for the number $\sgkn$ of $k$-regular on $n$ vertices
for $k \geq 2$
\[
    \sgkn \approx
    \frac{(n k / e)^{n k / 2}}{k!^n}
    \frac{e^{-(k^2-1)/4}}{\sqrt{2}}
    \tsgk(n^{-1})
\]
for a formal power series $\tsgk(z)$
with $\tsgk(0) = 2$,
whose coefficients are functions of $k$,
explicitly given in the theorem.
This extends several previous results.
\cite{read1959some} obtained this asymptotic expansion for $k=3$.
For a general fixed $k$, \cite{bender_asymptotic_1978} and \cite{Bo80}
derived the main asymptotics of $\sgkn$,
and \cite{mckay1983applications} and \cite{mckay1991asymptotic}
conjectured the first three terms of the asymptotic expansion.
A linear recurrence with polynomial coefficients for $\sgkn$
has been obtained by \cite{read1960enumeration} and \cite{chen1999enumeration} for $k=3$,
by \cite{read1980number} and \cite{goulden_hammond_1983} for $k=4$
and up to $k=7$ by \cite{chyzak2024differential}.
From those recurrences, an asymptotic expansion
is likely computable (\cite{kauers_mathematica_2011}, \cite{zeilberger_asyrec_2008})
but requires some additional work.

We also investigate the number $\csgkn$
of connected $k$-regular graphs on $n$ vertices,
proving in \cref{th:csgkn}
\[
    \csgkn \approx
    \frac{(n k / e)^{n k / 2}}{k!^n}
    \frac{e^{-(k^2-1)/4}}{\sqrt{2}}
    \tcsgk(n^{-1})
\]
for any $k \geq 3$.
This extends the result $\csgkn \sim \sgkn$
from \cite{Bollobas} and \cite{wormald1981asymptotic}
(main asymptotics).
Going further, we prove
for any $k \geq 3$
\[
    \csgkn =
    \sgkn
    \left(1 + \exactbigO(n^{-(k+1)(k-2)/2}) \right)
\]
in \cref{th:link:asympt}.

Our work relies on a recent exact formula
for $k$-regular graphs
from \cite{caizergues2023exact},
to which the Laplace method is applied
(a classic technique for extracting the asymptotics of parametric integrals, see \eg \cite{bruijn58}, \cite{FS09} \cite{erdelyi1956asymptotic}, \cite{Olver74}, \cite{wong2001asymptotic,PW13}).
The asymptotic expansion of
connected $k$-regular graphs,
stated in \cref{th:csgkn},
then follows by application of techniques
for extracting the asymptotics of the coefficients
of divergent series,
due to \cite{Wright70} and \cite{Be75}.

After introducing the problem,
presenting related work
and setting some definitions
in \cref{sec:introduction},
\cref{sec:regular:graphs} presents our main result
on the asymptotic expansion of regular graphs,
then \cref{sec:connected:regular:graphs}
extends this result to connected regular graphs.
To simplify the reading of the paper,
we moved all the proofs at the end,
in \cref{sec:proofs}.

    \subsection{Related work}

The enumeration of regular graphs
started in 1959 when Read
obtained linear recurrences with polynomial coefficients
for the numbers of $3$-regular graphs
and connected $3$-regular graphs (\cite{read1959some}, \cite{read1959enumeration}, \cite{read1960enumeration}, \cite{read1970some}),
and showed how to extract asymptotic expansions from them.
Since then, those results have been extended
in various directions,
including graphs with a given degree sequence,
graphs with degrees restricted to a given set,
and $k$-regular graphs where $k$ varies
with the number of vertices
(see \eg \cite{mckay1991asymptotic}).
This area of research is too rich
to be fairly covered in this paper,
so we refer the interested reader to the surveys
by \cite{wormald2018asymptotic} and \cite{Wo99},
and restrict our review of the literature
to the exact and asymptotic enumeration
of $k$-regular and connected $k$-regular graphs
for fixed $k$.

Let us start with results on the number $\sgkn$
of $k$-regular graphs on $n$ vertices.
\cite{read1959enumeration} and \cite{read1959some}
provided a general exact expression
for $\sgkn$, that was not directly amenable
to asymptotic analysis.
Using a probabilistic approach and introducing what is
known today as the \emph{configuration model},
\cite{bender_asymptotic_1978} and \cite{Bo80}
derived the main asymptotics of $\sgkn$ for any fixed $k$.
A recurrence for $\sg^{(4)}_n$
was obtained by \cite{read1980number},
and a different recurrence was derived by
\cite{goulden_hammond_1983}.
\cite{mckay1983applications} and \cite{mckay1991asymptotic}
correctly conjectured the first three terms
of the asymptotic expansion of $\sgkn$.
\cite{chen1999enumeration} found the recurrence for $\sg^{(3)}_n$ using a different approach.
The next important step was the proof by \cite{rf1990gessel}
that the generating function of $\sgkn$ is D-finite.
This is equivalent with the fact that for any $k$,
the sequence $\sgkn$ satisfies a linear recurrence
with polynomial coefficients.
The corresponding differential equations were calculated by
\cite{CMS05} and \cite{mishna_automatic_2005},
and \cite{chyzak2024differential}
recently obtained the differential equation
up to $k = 7$.
As mentioned earlier, an asymptotic expansion
is likely computable from this recurrence
(\cite{kauers_mathematica_2011}, \cite{zeilberger_asyrec_2008}).
However, the completeness of the theory for the computation
of an asymptotic expansion from a linear recurrence
(\cite{birkhoff1933analytic})
is questioned (\cite[Section VIII.7 p.~581]{FS09}, \cite{wong1992asymptotic}).
Finally, \cite{caizergues2023exact} provided
a new exact expression for $\sgkn$.
The present article extracts
an asymptotic expansion from it.

We now turn to connected $k$-regular graphs.
The exact and asymptotic enumeration of connected $3$-regular graphs
was first achieved by \cite{read1960enumeration} and \cite{read1970some}
and was extended by \cite{wormald1979enumeration}
to more general notions of connectivity.
Then \cite{Bollobas} and \cite{wormald1981asymptotic} proved independently
that $k$-regular graphs are asymptotically almost surely connected.
Finally, \cite{Wright70} linked the asymptotic expansions
of a combinatorial family and sets of objects from this family.
Thus, it is no surprise that our asymptotic expansion of regular graphs
translates to an asymptotic expansion for connected regular graphs.
We chose to explicit this link in \cref{sec:regular:graphs}.

\paragraph{What does it mean to consider a formal value $k$?}
The results of \cite{rf1990gessel}, \cite{mishna_automatic_2005}
show that, for any fixed $k$
and given enough computational power,
one can compute the differential equation
characterizing the generating function $\sgk(z)$
of $k$-regular graphs,
from which an asymptotic expansion for the number $\sgkn$
of $k$-regular graphs on $n$ vertices
can often be extracted.
For example, stopping at order $1$ for the sake of simplicity
and keeping ``$k$'' non-evaluated in part of the formulas
to highlight their similarities, one finds
for $k=3$
\[
    \sg^{(3)}_n =
    \frac{(nk/e)^{nk/2}}{k!^n}
    \frac{e^{-(k^2-1)/4}}{\sqrt{2}}
    \left(
    2 - \frac{71}{18} n^{-1} + \bigO(n^{-2})
    \right)
\]
and for $k=4$
\[
    \sg^{(4)}_n =
    \frac{(nk/e)^{nk/2}}{k!^n}
    \frac{e^{-(k^2-1)/4}}{\sqrt{2}}
    \left(
    2 - \frac{235}{24} n^{-1} + \bigO(n^{-2})
    \right).
\]
In contrast, our result provides
explicit expressions for the terms
of the asymptotic expansion,
where the value $k$ appears as formal.
We will prove in particular for any $k \geq 3$
\[
    \sgkn =
    \frac{(nk/e)^{nk/2}}{k!^n}
    \frac{e^{-(k^2-1)/4}}{\sqrt{2}}
    \left(
    2 - \frac{1}{6} (k^4 - 2 k^2 + 3 k - 1) (k n)^{-1} + \bigO(n^{-2})
    \right).
\]
More terms of the asymptotic expansion
are provided in \cref{sec:numerical},
and terms corresponding to connected regular graphs
in \cref{sec:connected:numerical}.

\paragraph{Could the asymptotic expansion be derived using previous techniques?}
To our knowledge, this is the first time the asymptotic expansion
of $k$-regular graphs is obtained for a general value $k$.
Could previous methods have obtained the same result?
%The fact that the asymptotic expansion was derived
%by \cite{read1959some} for $3$-regular graphs,
%and the conjecture from \cite{mckay1983applications}
%and \cite{mckay1991asymptotic}
%on the first terms of the asymptotic expansion for a general $k$,
%both suggest the interest of community for this problem.
%
Many different techniques have been developed
to enumerate regular graphs.
In fact, we suspect regular graphs have become
a playground to test mathematical techniques
and illustrate their scope.
This makes it difficult to answer categorically
the question of this paragraph.
We can however highlight some key differences
with previous approaches.

While counting $k$-regular graphs is challenging,
enumerating some models of $k$-regular multigraphs
(where loops and multiple edges are allowed)
is simple.
This classical observation
(see \eg the configuration model \cite{bender_asymptotic_1978},
\cite{Bo80} or \cite{caizergues2023exact})
leads to counting $k$-regular graphs
by deformation from $k$-regular multigraphs,
removing loops and multiple edges.
Previous works have used inclusion-exclusion
to remove loops and multiple edges from multigraphs,
but they did not have access
to the full description of all possible ways
loops and multiple edges can overlap.
The exact formula from \cite{caizergues2023exact}
enumerating $k$-regular graphs contains this information
(using inversion instead of inclusion-exclusion),
and the present work builds upon it to extract
the asymptotic expansion.

Some previous works did not rely on a combinatorial decomposition
to enumerate graphs with constrained degrees.
For example, \cite{mckay1991asymptotic}
consider the generating function of all graphs
with $x_j$ marking the degree of vertex $j$
\[
    \prod_{1 \leq i < j} (1 + x_i x_j)
\]
and write the number of graphs
with degree sequence $(d_1, d_2, \ldots)$ on $n$ vertices
as a coefficient extraction, expressed by a Cauchy integral.
For $k$-regular graphs, this leads to
\[
    \sgkn
    =
    [x_1^d \cdots x_n^d]
    \prod_{1 \leq i < j \leq n} (1 + x_i x_j)
    =
    \frac{1}{(2 i \pi)^n}
    \oint^n
    \prod_{1 \leq i < j \leq n} (1 + z_i z_j)
    \frac{d z_1}{z_1^{d+1}}
    \cdots
    \frac{d z_n}{z_n^{d+1}}.
\]
The main difference with the present work
is that the dimension $n$ of the domain of integration
tends to infinity,
while our integrals are on a fixed dimension $k$,
simplifying the extraction of an asymptotic expansion.

    \subsection{Definitions and elementary properties}
    \label{sec:def}

\paragraph{Double factorial.}
The double factorial notation for odd numbers stands for
\[
    (2n-1)!! =
    \frac{(2n)!}{2^n n!}
    =
    (2n-1) (2n-3) (2n-5) \cdots 1.
\]
It has the following integral representation
\[
    (2n-1)!! =
    \frac{1}{\sqrt{2 \pi}}
    \int_{\reals}
    t^{2n} e^{-t^2/2} dt.
\]

\paragraph{Hadamard product.}
The \emph{exponential Hadamard product} is denoted by and defined as
\[
    \sum_n a_n \frac{z^n}{n!}
    \odot_z
    \sum_n b_n \frac{z^n}{n!}
    =
    \sum_n a_n b_n \frac{z^n}{n!}.
\]
For example, we have
\[
    F(z) \odot_z e^z = F(z).
\]
The exponential Hadamard product was introduced in
\cite[Section 2.1, p.~64]{BLL97}
as \emph{Cartesian product} or just \emph{Hadamard product}.
This notion (``\emph{cet ami oublié}'') can be
traced back to \cite[Theorem 3, Equation (8)]{Joy81}.
To avoid confusion with the more common ordinary Hadamard product,
we keep the word ``\emph{exponential}''.
We define the evaluated exponential Hadamard product as
\[
    F(z) \odot_{z = x} G(z)
    =
    \left( F(z) \odot_z G(z) \right)_{|z = x}.
\]
It satisfies the elementary property
\[
    F(z) \odot_{z = x} G(z)
    =
    F(z x) \odot_{z = 1} G(z)
    =
    F(z) \odot_{z = 1} G(z x).
\]

\begin{lemma}
\label{th:gaussian_transform}
For any polynomial $P(x)$, we have
\[
    e^{x^2/2}
    \odot_x
    P(x)
    =
    \sum_n
    (2n-1)!!
    [z^{2n}] P(z)
    x^{2n}
    =
    \frac{1}{\sqrt{2 \pi}}
    \int_{\reals}
    P(x t)
    e^{-t^2/2}
    d t.
\]
\end{lemma}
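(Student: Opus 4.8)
The plan is to verify the two claimed equalities separately, using linearity in $P$ to reduce to monomials $P(x) = x^m$. First I would establish the left equality. By definition of the exponential Hadamard product, writing $e^{x^2/2} = \sum_{n \geq 0} \frac{x^{2n}}{2^n n!} = \sum_{n \geq 0} \frac{(2n-1)!!}{(2n)!} x^{2n}$ (using the identity $(2n)! = 2^n n! (2n-1)!!$ from the double factorial paragraph), the coefficient of $\frac{x^j}{j!}$ in $e^{x^2/2}$ is $(j-1)!!$ when $j$ is even and $0$ when $j$ is odd. Hence $e^{x^2/2} \odot_x P(x)$, with $P(x) = \sum_j [z^j]P(z)\, x^j$, picks out $\sum_n (2n-1)!! [z^{2n}]P(z)\, x^{2n}$ — all odd-degree terms of $P$ are annihilated. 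This is exactly the middle expression, so the first equality is immediate from unwinding definitions.

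Next I would establish the equality between the middle sum and the integral. Again by linearity it suffices to treat $P(z) = z^{2n}$ on the left and $P(xt) = (xt)^{2n} = x^{2n} t^{2n}$ on the right (odd monomials contribute $0$ to both sides — to the sum by the coefficient extraction, and to the integral because $\int_{\reals} t^{2m+1} e^{-t^2/2}\, dt = 0$ by oddness). For $P(z) = z^{2n}$ the middle expression equals $(2n-1)!!\, x^{2n}$, while the right-hand side equals $x^{2n} \cdot \frac{1}{\sqrt{2\pi}} \int_{\reals} t^{2n} e^{-t^2/2}\, dt$, and these agree precisely by the integral representation of the double factorial quoted just before the lemma. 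Summing over the monomials of a general polynomial $P$ and invoking linearity of both the finite sum and the integral completes the argument.

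The main thing to be careful about — rather than a genuine obstacle — is the bookkeeping of the Hadamard product convention: the product is taken with respect to the \emph{exponential} generating function grading, so one must consistently express $e^{x^2/2}$ in the form $\sum_j c_j \frac{x^j}{j!}$ before reading off $c_j$, and likewise view $P$ through its ordinary coefficients $[z^j]P$ versus its exponential coefficients $j!\,[z^j]P$. Once the even/odd split is handled and the two quoted facts about $(2n-1)!!$ are applied, everything is routine; there is no analytic subtlety since $P$ is a polynomial, so the integral is a finite linear combination of absolutely convergent Gaussian moments and all interchanges of sum and integral are trivially justified.
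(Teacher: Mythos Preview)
Your proof is correct and is exactly the intended argument: the paper states this lemma without proof, as it follows directly from unwinding the definition of the exponential Hadamard product together with the integral representation of $(2n-1)!!$ quoted just above. Your bookkeeping of the exponential grading (reading off $n![x^n]e^{x^2/2}=(n-1)!!$ for even $n$ and $0$ for odd $n$) and the reduction to monomials by linearity are precisely what is needed.
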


When $\vz = (z_1, \ldots, z_k)$ is a vector of variables,
and $F_j(z)$ are series with $F_j(0) = 0$
(to ensure that $e^{F_j(z)}$ is a well defined formal power series,
see \eg \cite{kauers2011formal}),
we define the compact notation $\odot_{\vz}$ as
\[
    e^{\sum_{j=1}^k F_j(z_j)}
    \odot_{\vz}
    G(\vz)
    :=
    e^{F_1(z_1)}
    \odot_{z_1}
    \left(
    e^{F_2(z_2)}
    \odot_{z_2}
    \left(
    \cdots
    \odot_{z_{k-1}}
    \left(
    e^{F_k(z_k)}
    \odot_{z_k}
    G(\vz)
    \right)
    \cdots
    \right)
    \right).
\]
The notation $\odot_{\vz = \vx}$
is defined in the same way,
replacing each $\odot_{z_j}$
with $\odot_{z_j = x_j}$.

\begin{corollary}
\label{th:had_to_int}
For any multinomial $P(x_1, \ldots, x_k)$
and positive values $\alpha_1, \ldots, \alpha_k$,
we have
\[
    e^{\sum_{j=1}^k \alpha_j x_j^2 / 2}
    \odot_{\vx = \vone}
    P(\vx)
    =
    \frac{1}{(2 \pi)^{k/2} \sqrt{\alpha_1 \cdots \alpha_k}}
    \int_{\reals^k}
    P(\vt) e^{- \sum_{j=1}^k t_j^2 / (2 \alpha_j)}
    d \vt.
\]
\begin{comment}
where $I(\valpha)$ denotes the following Cartesian product of complex lines
\[
    I(\valpha) =
    \prod_{j=1}^k
    \begin{cases}
    (-\infty, +\infty) & \text{if } \alpha_j > 0,\\
    (- i \infty, + i \infty) & \text{if } \alpha_j < 0.
    \end{cases}
\]
\end{comment}
\end{corollary}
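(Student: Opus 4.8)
The plan is to reduce \cref{th:had_to_int} to \cref{th:gaussian_transform} (or rather its natural multivariate consequence) by a sequence of one-variable changes of variable, applied inside the nested definition of $\odot_{\vx}$. First I would observe that it suffices to prove the identity when $P$ is a monomial $\vx^{\vm} = x_1^{m_1} \cdots x_k^{m_k}$, since both sides are linear in $P$. For a monomial the left-hand side factors completely through the nested definition of $\odot_{\vx = \vone}$: indeed, $e^{\alpha_j x_j^2/2} \odot_{x_j} x_j^{m_j}$ only acts on the $x_j$-variable, so the whole expression becomes $\prod_{j=1}^k \left( e^{\alpha_j x_j^2 / 2} \odot_{x_j = 1} x_j^{m_j} \right)$.

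Next I would handle one factor. Using the elementary rescaling property $F(z) \odot_{z=1} G(zx) = F(zx) \odot_{z=1} G(z)$ stated just before \cref{th:gaussian_transform}, I would write $e^{\alpha_j x^2/2} = e^{(\sqrt{\alpha_j} x)^2 / 2}$, absorb the $\sqrt{\alpha_j}$ into the argument so as to match the normalized kernel $e^{x^2/2}$ of \cref{th:gaussian_transform}, and then apply that lemma to get the single integral $\frac{1}{\sqrt{2\pi}} \int_\reals (\sqrt{\alpha_j}\, s)^{m_j}\, e^{-s^2/2}\, ds$. A substitution $t_j = \sqrt{\alpha_j}\, s$ turns this into $\frac{1}{\sqrt{2\pi \alpha_j}} \int_\reals t_j^{m_j} e^{-t_j^2/(2\alpha_j)} dt_j$. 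Taking the product over $j$ and using Fubini to recombine the $k$ one-dimensional integrals into a single integral over $\reals^k$ yields exactly the claimed formula for $P(\vx) = \vx^{\vm}$, and then linearity finishes the general multinomial case. A small point worth spelling out is that $e^{\alpha_j x_j^2/2}$ is $e^{F_j(x_j)}$ with $F_j(z) = \alpha_j z^2/2$ satisfying $F_j(0)=0$, so the compact notation $\odot_{\vx}$ is well-defined here, and that $\alpha_j > 0$ is exactly what makes the substitution $t_j = \sqrt{\alpha_j}\, s$ and the resulting Gaussian integral legitimate.

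The only mild obstacle is bookkeeping: making sure the nested, right-associated definition of $\odot_{\vx}$ really does factor as a product over coordinates when $P$ is a monomial, and tracking the normalizing constants through the two rescalings (the one inside the Hadamard product and the one inside the integral) so that the final constant is $(2\pi)^{-k/2}(\alpha_1 \cdots \alpha_k)^{-1/2}$. Everything else is a direct appeal to \cref{th:gaussian_transform} coordinate by coordinate, together with linearity in $P$ and Fubini. The commented-out generalization to signed $\alpha_j$ via rotated contours is not needed for the statement as given, so I would not pursue it.
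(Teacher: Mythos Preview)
Your proposal is correct and is exactly the natural argument the paper implicitly intends: the statement is labeled a Corollary of \cref{th:gaussian_transform} and given no separate proof, so reducing to monomials by linearity, factoring the nested Hadamard product over coordinates, applying \cref{th:gaussian_transform} with the rescaling $x_j \mapsto \sqrt{\alpha_j}\, x_j$, and recombining via Fubini is precisely what is expected. The bookkeeping you flag (factorization of the nested $\odot_{\vx}$ on monomials and tracking the constants) is routine and your handling is accurate.
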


\paragraph{Asymptotic expansion.}
The theory of asymptotic series was created independently
by Poincaré and Stieltjes (\cite{jahnke2003history}).
Consider functions $\varphi_j(n)$
satisfying
$\varphi_{j+1}(n) \underset{n \to +\infty}{=} \smallo(\varphi_j(n))$
for all $j$.
The notation
\[
    a_n \approx
    \sum_{j \geq 0} c_j \varphi_j(n)
\]
then means that for any $r \geq 0$, we have
\[
    a_n =
    c_0 \varphi_0(n) + \cdots + c_{r-1} \varphi_{r-1}(n)
    + \bigO(\varphi_r(n)).
\]
We then say that $\sum_{j \geq 0} c_j \varphi_j(n)$
is the \emph{asymptotic expansion} of $a_n$
with respect to the \emph{asymptotic scale}
$(\varphi_j(n))_{j \geq 0}$.
The mention of the asymptotic scale
is often clear from the context and omitted.
In particular, given a formal power series
$\tF(z) = \sum_{n \geq 0} f_n z^n$,
we write
\[
    a_n \approx b_n \tF(n^{-1})
\]
if for any $r \geq 0$, we have
\[
    a_n =
    b_n (f_0 + f_1 n^{-1} + \cdots + f_{r-1} n^{-r+1} + \bigO(n^{-r})).
\]
Note that this asymptotics holds for any fixed $r$,
as $n$ tends to infinity.
The series $\tF(z)$ might have a zero radius of convergence.
Thus, there is no guarantee that for a given $n$,
\[
    a_n = \lim_{r \to +\infty} b_n \sum_{k=0}^{r-1} f_k n^{-k}.
\]
Stirling approximation is a famous example of divergent asymptotic expansion
\[
    n! \approx
    n^n e^{-n} \sqrt{2 \pi n}
    \left(
    1
    + \frac{1}{12} n^{-1}
    + \frac{1}{288} n^{-2}
    - \frac{139}{51840} n^{-3}
    - \cdots
    \right)
\]
(see \eg many different proofs in \cite{FS09},
a proof due to Wench available in \cite[p.~267]{comtet2012advanced},
\cite{marsaglia_new_1990}
or the end of \cref{sec:laplace}).
An important property of asymptotic expansions
is that if $c_n$ converges exponentially fast to $0$,
then it can be added without impact to the asymptotic expansion
\[
    b_n (\tF(n^{-1}) + c_n) \approx
    b_n \tF(n^{-1}).
\]
Indeed, for each $r \geq 0$, we have $c_n = \bigO(n^{-r})$, so
\begin{align*}
    b_n (\tF(n^{-1}) + c_n)
    &=
    b_n (f_0 + f_1 n^{-1} + \cdots + f_{r-1} n^{-r+1} + \bigO(n^{-r}) + c_n)
    \\&=
    b_n (f_0 + f_1 n^{-1} + \cdots + f_{r-1} n^{-r+1} + \bigO(n^{-r})).
\end{align*}

        \section{Regular graphs}
        \label{sec:regular:graphs}

Since the sum of the degrees of a graph
is twice its number of edges,
there is no $k$-regular graph on $n$ vertices
when $n k$ is odd.
Thus, in the following,
we assume without loss of generality
that $n k$ is even.
The $k$-regular graphs for $k \in \{0,1\}$ being trivials,
we also assume $k \geq 2$.

We proved in a previous work (\cite{caizergues2023exact})
an exact expression enumerating regular graphs.
We will use the following equivalent formulation.

\begin{proposition}[\cite{caizergues2023exact}]
\label{th:sgkn}
The number $\sg^{(k)}_n$
of $k$-regular graphs with $n$ vertices
is equal to
\[
    \sg^{(k)}_n =
    (-1)^{n k / 2}
    e^{\sum_{j=1}^k x_j^2/(2j)}
    \exphad_{\vx = \vone}
    \left(
    [y^k]
    \frac{e^{- i \sum_{j=1}^k x_j y^j}}
        {\sqrt{1 - y^2}}
    \right)^n.
\]
\end{proposition}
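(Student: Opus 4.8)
The plan is to give a self-contained proof (rather than translating the original formulation of \cite{caizergues2023exact}): unfold the $n$-th power inside the evaluated Hadamard product into a coefficient extraction over $n$ auxiliary variables, evaluate the resulting Gaussian-type Hadamard product in closed form, and then recognise the surviving terms as the signed generating function of graphs by degree sequence.

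First I would set $Q(\vx) := [y^k]\, e^{-i\sum_{j=1}^k x_j y^j}/\sqrt{1-y^2}$, a polynomial in $\vx = (x_1,\dots,x_k)$. Since the operators $[y_a^k]$ act on disjoint variables,
\[
    Q(\vx)^n
    = [y_1^k \cdots y_n^k] \prod_{a=1}^n \frac{e^{-i\sum_{j=1}^k x_j y_a^j}}{\sqrt{1-y_a^2}}
    = [y_1^k \cdots y_n^k]\left( e^{-i\sum_{j=1}^k x_j p_j} \prod_{a=1}^n \frac{1}{\sqrt{1-y_a^2}} \right),
\]
where $p_j := \sum_{a=1}^n y_a^j$. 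All of this takes place in the ring of formal power series in $(\vx,\vy)$, where everything is well defined because each $p_j$ has zero constant term, and where $\exphad_{\vx=\vone}$ commutes with the extraction $[y_1^k \cdots y_n^k]$ since the two act on disjoint groups of variables.

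Next I would evaluate the Hadamard product. Both $e^{\sum_j x_j^2/(2j)}$ and $e^{-i\sum_j x_j p_j}$ factor as products over $j$ of functions of a single $x_j$, so $\exphad_{\vx=\vone}$ reduces to a product of one-variable evaluated Hadamard products, and a termwise computation (the Fourier transform of a Gaussian in disguise, cf. \cref{th:gaussian_transform}) gives
\[
    e^{x^2/(2j)} \exphad_x e^{-i\lambda x}
    = \sum_{m \ge 0} \frac{(2m-1)!!}{j^m}\,(-i\lambda)^{2m}\,\frac{x^{2m}}{(2m)!}
    = e^{-\lambda^2 x^2/(2j)},
\]
hence $e^{x^2/(2j)} \exphad_{x=1} e^{-i\lambda x} = e^{-\lambda^2/(2j)}$. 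Taking $\lambda = p_j$ and multiplying over $j$ yields
\[
    e^{\sum_{j=1}^k x_j^2/(2j)} \exphad_{\vx=\vone} Q(\vx)^n
    = [y_1^k \cdots y_n^k]\left( e^{-\sum_{j=1}^k p_j^2/(2j)} \prod_{a=1}^n \frac{1}{\sqrt{1-y_a^2}} \right).
\]

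The combinatorial heart, which I expect to be the main obstacle, is to show this coefficient equals $[y_1^k \cdots y_n^k] \prod_{a<b}(1-y_a y_b)$. Using $p_j^2 = \sum_{a,b}(y_a y_b)^j$ and writing $L_k(u) := \sum_{j=1}^k u^j/j$, the exponent becomes $\sum_{j=1}^k p_j^2/(2j) = \frac{1}{2} \sum_a L_k(y_a^2) + \sum_{a<b} L_k(y_a y_b)$, so the argument factors as $\prod_a \big( e^{-\frac{1}{2} L_k(y_a^2)}/\sqrt{1-y_a^2}\big) \cdot \prod_{a<b} e^{-L_k(y_a y_b)}$. Since $e^{-\sum_{j \ge 1} u^j/j} = 1-u$ exactly, one has $e^{-L_k(u)} \equiv 1-u$ and $e^{-\frac{1}{2} L_k(u)}/\sqrt{1-u} \equiv 1$ modulo terms of degree $> k$ in $u$; therefore choosing a correction term from any of these factors forces degree $> k$ in some $y_a$ and contributes nothing to $[y_1^k \cdots y_n^k]$ — this is the step requiring care. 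It then remains to expand $\prod_{a<b}(1-y_a y_b) = \sum_G (-1)^{e(G)} \prod_v y_v^{d_G(v)}$ over all graphs $G$ on $\{1,\dots,n\}$, note that the coefficient of $y_1^k \cdots y_n^k$ retains exactly the $k$-regular graphs each weighted by $(-1)^{nk/2}$, and multiply through by $(-1)^{nk/2}$ (using that $nk$ is even) to obtain $\sgkn$. The remaining points to check — the legitimacy of interchanging $\exphad_{\vx=\vone}$ with coefficient extraction, and the one-variable Gaussian evaluation — are routine but rely on the precise shape of the formula: the factor $i$, the weights $1/(2j)$, and the truncation of the sums at $j = k$.
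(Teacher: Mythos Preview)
Your proof is correct and takes a genuinely different route from the paper. The paper's proof is a two-line rewriting: it starts from the formula already established in \cite{caizergues2023exact} (with $\sqrt{1+y^2}$ in the denominator and alternating signs $(-1)^{j+1}$ in the Gaussian exponent) and applies the rescalings $x_j \mapsto i^{j-1} x_j$ and $y \mapsto i y$ to reach the stated form. You instead give a self-contained derivation from the degree-sequence generating function $\prod_{a<b}(1-y_a y_b)$, which amounts to re-proving the cited result directly in its reformulated version. Your argument makes the mechanism transparent: the one-variable identity $e^{x^2/(2j)} \exphad_{x=1} e^{-i\lambda x} = e^{-\lambda^2/(2j)}$ converts the Hadamard product into the Newton power sums $p_j^2$, and the truncation $\sum_{j=1}^k$ is harmless precisely because each correction term carries degree $\ge k+1$ in some $y_a$ and is annihilated by $[y_a^k]$. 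The step you flag as delicate is indeed fine: a diagonal correction has $y_a$-degree $\ge 2(k+1)$, an off-diagonal one has $y_a$-degree $\ge k+1$, and all other factors contribute only non-negative powers, so any single correction already pushes the total degree past $k$. The paper's approach is shorter because it offloads the substance to the citation; yours is more informative but essentially reproduces the content of that earlier paper.
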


The asymptotic expansion of $\sgkn$
will be extracted using the Laplace method,
presented in the next subsection.

    \subsection{Laplace method} \label{sec:laplace}

The Laplace method is a central tool of asymptotic analysis
that provides asymptotics for integrals of the form
\[
    \int_I A(t) e^{- n \phi(t)} dt,
\]
as $n$ tends to infinity.
This type of integrals appears commonly in probability theory and physics,
hence the notation $A(t)$ for the amplitude
and $\phi(t)$ for the phase.
Many sources present the main asymptotics
(\eg \cite{bruijn58}, \cite{FS09})
or a way to compute the asymptotic expansion
(\cite{erdelyi1956asymptotic}, \cite{Olver74}, \cite{wong2001asymptotic}, \cite{PW13}).
Some authors
(\cite{campbell1987explicit}, \cite{wojdylo2006coefficients}, \cite{nemes2013explicit})
focus on deriving explicit expressions for the coefficients.
In contrast, we prefer working with generating functions (series)
rather than sequences.
Recent works on asymptotic expansions
(\cite{Bo16}, \cite{dovgal2023asymptotics})
show that gathering the coefficients
of an asymptotic expansion into a generating function
is convenient for their manipulation.
We present a version of the Laplace method
following closely the proof by \cite{PW13},
adding only a more explicit expression
for the generating function of the coefficients
of the asymptotic expansion.

\begin{proposition}
\label{th:laplace}
Consider a compact interval $I$ neighborhood of $0$
and functions $A(x)$ and $\phi(x)$ analytic on $I$
such that $A(x)$ is not the zero function,
$\phi'(0) = 0$,
$\phi''(0) \neq 0$
and $\Real(\phi(x))$ reaches its minimum on $I$
only at $0$.
Then there exists a formal power series $\tF(z)$
such that the following asymptotic expansion holds
\[
    \int_I A(t) e^{- n \phi(t)} d t
    \approx
    e^{-n \phi(0)}
    \sqrt{\frac{2 \pi}{\phi''(0) n}}
    \tF(n^{-1}).
\]
Define the series $\psi(t)$ and $T(x)$ as
\[
    \psi(t) = \left( \frac{\phi(t) - \phi(0)}{\phi''(0) t^2/2} \right)^{-1/2},
    \qquad
    T(x) = x \psi(T(x)).
\]
then
\[
    \tF(z) =
    e^{z x^2/ (2 \phi''(0))}
    \odot_{x = 1}
    A(T(x)) T'(x)
\]
or, if one wants more explicit expressions for the coefficients,
\[
    [z^k] \tF(z) =
    \frac{(2k-1)!!}{\phi''(0)^k}
    [t^{2k}] A(t) \psi(t)^{2k+1}
\]
for all $k \geq 0$.
\end{proposition}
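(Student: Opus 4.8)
The plan is to reduce the statement to a Gaussian-integral computation via the standard substitution that "flattens" the phase, then recognize the resulting expansion as an exponential Hadamard product. First I would use the hypotheses to write $\phi(t) - \phi(0) = \tfrac12 \phi''(0) t^2 / \psi(t)^2$ near $0$, where $\psi(t) = (\,(\phi(t)-\phi(0))/(\phi''(0)t^2/2)\,)^{-1/2}$ is analytic with $\psi(0) = 1$; this is where analyticity of $\phi$ and $\phi''(0) \neq 0$ are used. Then I would introduce the new variable $x$ by $x = t/\psi(t)$, equivalently $t = T(x)$ where $T(x) = x\psi(T(x))$ is the (analytic, $T(0)=0$, $T'(0)=1$) solution guaranteed by the implicit function theorem / Lagrange inversion. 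Under this change of variables the phase becomes exactly quadratic: $\phi(T(x)) - \phi(0) = \tfrac12 \phi''(0) x^2$, and the integral becomes
\[
    \int_I A(t) e^{-n\phi(t)}\,dt
    = e^{-n\phi(0)} \int_{I'} A(T(x))\,T'(x)\, e^{-n \phi''(0) x^2/2}\,dx,
\]
with $I'$ a compact neighborhood of $0$.

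The next step is to justify that the tails outside any fixed small neighborhood of $0$ contribute only exponentially small error (hence, by the remark on asymptotic expansions in \cref{sec:def}, nothing to the expansion): this follows because $\Real(\phi)$ attains its minimum on $I$ only at $0$, so $\Real(\phi(t)) - \phi(0) \geq c > 0$ away from $0$, and $A$ is bounded on the compact $I$. So it suffices to analyze $\int_{-\varepsilon}^{\varepsilon} G(x)\, e^{-n\phi''(0)x^2/2}\,dx$ where $G(x) := A(T(x))T'(x)$ is analytic near $0$. Writing $G(x) = \sum_{m\geq 0} g_m x^m$ and, again up to exponentially small error, extending the integral back to $\reals$ and applying the Gaussian moment identity from \cref{th:gaussian_transform} (with variance $1/(\phi''(0)n)$), the odd terms vanish and the even terms give
\[
    \int_\reals G(x) e^{-n\phi''(0)x^2/2}\,dx
    = \sqrt{\frac{2\pi}{\phi''(0) n}} \sum_{m\geq 0} g_{2m} \frac{(2m-1)!!}{(\phi''(0) n)^m},
\]
which is exactly $\sqrt{2\pi/(\phi''(0)n)}\,\tF(n^{-1})$ with $\tF(z) = \sum_{m\geq 0} g_{2m} (2m-1)!! \, z^m / \phi''(0)^m$. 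Term by term this is an honest asymptotic expansion in the sense of \cref{sec:def}. Finally, by \cref{th:gaussian_transform} read in the $\odot$ direction, $\sum_m g_{2m}(2m-1)!! (z/\phi''(0))^m x^{2m}$ evaluated at $x=1$ equals $e^{z x^2/(2\phi''(0))} \odot_{x=1} G(x)$, giving the first stated formula for $\tF$; and the explicit coefficient formula is immediate since $[z^k]\tF(z) = g_{2k}(2k-1)!!/\phi''(0)^k$ and $g_{2k} = [x^{2k}] A(T(x))T'(x) = [t^{2k}] A(t)\psi(t)^{2k+1}$ — the last equality by a one-line Lagrange-inversion / substitution check, since $x = t/\psi(t)$ gives $dx = d(t/\psi(t))$ and the change of variables turns $A(T(x))T'(x)\,dx$ back into $A(t)\,d(t/\psi(t))$, whose $[t^{2k}]$-extraction produces the factor $\psi(t)^{2k+1}$ after accounting for $[x^{2k}] H(x) = [t^{2k}] H(t/\psi(t))\cdot(\text{Jacobian})$ computed via Lagrange inversion for $T$.

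The main obstacle, and the part deserving genuine care rather than routine manipulation, is the rigorous control of the two "up to exponentially small error" steps together with the interchange of the (divergent) series expansion of $G$ with the integral: one cannot simply expand $G$ and integrate termwise over $\reals$, since $G$ is only analytic in a neighborhood of $0$. The correct argument, following \cite{PW13}, is to fix $r$, Taylor-expand $G$ to order $2r$ with an analytic remainder bounded by $C|x|^{2r}$ on $[-\varepsilon,\varepsilon]$, integrate the polynomial part exactly against the Gaussian (extending to $\reals$ at exponentially small cost) and bound the remainder integral by $O(n^{-r-1/2}\cdot\sqrt{1/n})$-type estimates via the standard $\int |x|^{2r} e^{-cnx^2}dx = O(n^{-r-1/2})$ scaling; combined with the exponentially small tail bound, this yields exactly the statement that for every $r$ the integral equals $e^{-n\phi(0)}\sqrt{2\pi/(\phi''(0)n)}$ times the partial sum of $\tF(n^{-1})$ up to order $r$ plus $O(n^{-r})$. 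Everything else — existence and analyticity of $\psi$ and $T$, the quadratic flattening, the moment computation, and the translation into $\odot$-notation — is bookkeeping supported by \cref{th:gaussian_transform} and Lagrange inversion.
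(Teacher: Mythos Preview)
Your proposal is correct and follows essentially the same route as the paper's proof: localize near $0$ by the exponential tail bound, flatten the phase via $t=T(x)$ with $T(x)=x\psi(T(x))$, Taylor-expand $G(x)=A(T(x))T'(x)$ to order $2r$, evaluate the Gaussian moments, and recover the coefficient formula by Lagrange inversion---exactly as in the paper (which likewise cites \cite{PW13}). The only point the paper treats with slightly more care than your sketch is that $\phi''(0)$ may be complex (only $\Real(\phi''(0))>0$ is guaranteed), so after rescaling the integration endpoints $\sqrt{\phi''(0)n}\,T^{-1}(\pm\epsilon)$ are complex and the moment computation is handled via a dedicated lemma (\cref{th:double_factorial_integral}) rather than a direct extension to $\reals$.
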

\begin{comment}
In particular, we have
\[
    \int_I A(t) e^{- n t^2 / 2} d t
    \approx
    \sqrt{\frac{2 \pi}{n}} \tF(n^{-1})
\]
with
\[
    \tF(z) = e^{x^2/2} \odot_{x = \sqrt{z}} A(x)
    \quad \text{and} \quad
    [z^k] \tF(z) = (2k-1)!! [t^{2k}] A(t).
\]
\end{comment}

We chose the letter $T$ for our function $T(x)$
because equations of the form $T(x) = x \psi(T(x))$
are typical of rooted tree enumeration \cite[Section I.5]{FS09}.
Our choice of defining $\psi(t)$
as $\left( \frac{\phi(t) - \phi(0)}{\phi''(0) t^2/2} \right)^{-1/2}$
instead of $\left( \frac{\phi(t) - \phi(0)}{t^2/2} \right)^{-1/2}$
ensures that when $\phi(t)$ has rational coefficients
(which is often the case in combinatorics),
so does $T(x)$.
This avoids the unnecessary introduction of algebraic numbers
and facilitates the use of computer algebra systems.

A common variant of the previous result is
the case where $I$ is an unbounded interval
instead of a compact one.

\begin{proposition}
\label{th:variant_laplace}
In \cref{th:laplace},
let us remove the assumption that the interval $I$ is compact,
and assume it is instead of the form
$[- a, +\infty)$,
$(-\infty, a]$
or $(-\infty, +\infty)$
for some positive $a$.
Let us also assume
that $|A(t) e^{- K \phi(t)}|$
and $|e^{- K \phi(t)}|$
are integrable on $I$ for some $K \geq 0$.
Then the asymptotic expansion
from the conclusion of \cref{th:laplace} holds.
\end{proposition}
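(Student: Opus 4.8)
The plan is to reduce \Cref{th:variant_laplace} to \Cref{th:laplace} by a standard truncation argument, isolating the contribution of a fixed compact neighborhood of $0$ and showing that the remaining ``tail'' contributes only exponentially small terms, which by the remark at the end of \Cref{sec:def} can be discarded without affecting the asymptotic expansion. First I would fix a compact interval $J = [-\delta, \delta] \subseteq I$ neighborhood of $0$, small enough that $A$ and $\phi$ remain analytic on $J$ and that $\Real(\phi)$ attains its minimum on $J$ only at $0$; since $\phi'(0)=0$ and $\phi''(0)\neq 0$, such a $\delta$ exists (possibly after shrinking to ensure $\Real(\phi(t)) > \phi(0)$ for $0 < |t| \le \delta$, using that $\Real(\phi)$ is analytic and $\Real(\phi''(0))$ governs the local behavior — or more carefully, invoking that $0$ is the unique minimizer on all of $I$). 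Then I would split
\[
    \int_I A(t) e^{-n\phi(t)} dt
    =
    \int_J A(t) e^{-n\phi(t)} dt
    +
    \int_{I \setminus J} A(t) e^{-n\phi(t)} dt,
\]
apply \Cref{th:laplace} to the first integral (its hypotheses are met on $J$), and bound the second.

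For the tail bound, the key point is the integrability hypothesis $|A(t) e^{-K\phi(t)}|$ integrable on $I$. I would write, for $n > K$,
\[
    \left| \int_{I \setminus J} A(t) e^{-n\phi(t)} dt \right|
    \le
    \int_{I \setminus J} |A(t)| e^{-K \Real(\phi(t))} \, e^{-(n-K)\Real(\phi(t))} dt.
\]
On $I \setminus J$ one has $\Real(\phi(t)) \ge m$ for some $m > \phi(0)$: indeed, near the endpoints of $J$ this follows from the strict minimality of $0$, and far from $0$ it follows either from $J$ already covering a neighborhood where $\Real\phi$ is minimized together with the growth implied by integrability, or more simply from the fact that $\Real(\phi)$ restricted to the closed set $I \setminus J$ is bounded below away from $\phi(0)$ — here I would actually argue on the compact pieces and the unbounded piece separately, using that $e^{-K\Real(\phi(t))}$ being integrable forces $\Real(\phi(t)) \to +\infty$ along the unbounded direction. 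Hence $e^{-(n-K)\Real(\phi(t))} \le e^{-(n-K)m}$ for all $t \in I \setminus J$, giving
\[
    \left| \int_{I \setminus J} A(t) e^{-n\phi(t)} dt \right|
    \le
    e^{-(n-K)m} \int_{I} |A(t)| e^{-K \Real(\phi(t))} dt
    =
    \bigO\!\left( e^{-nm} \right),
\]
where the remaining integral is a finite constant by hypothesis.

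Finally I would combine the two pieces. The first integral has, by \Cref{th:laplace}, the asymptotic expansion $e^{-n\phi(0)} \sqrt{2\pi/(\phi''(0)n)}\, \tF(n^{-1})$ with the $\tF$ described there (note $\tF$ depends only on the germs of $A$ and $\phi$ at $0$, hence is unchanged by the choice of $\delta$). The second is $\bigO(e^{-nm})$ with $m > \phi(0)$, so
\[
    e^{n\phi(0)} \sqrt{\tfrac{\phi''(0) n}{2\pi}}
    \int_{I \setminus J} A(t) e^{-n\phi(t)} dt
    =
    \bigO\!\left( \sqrt{n}\, e^{-n(m - \phi(0))} \right)
\]
tends to $0$ exponentially fast, so by the property recalled at the end of \Cref{sec:def} it can be absorbed: adding it to $\tF(n^{-1})$ leaves the asymptotic expansion unchanged. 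This yields exactly the conclusion of \Cref{th:laplace} for the unbounded interval. The main obstacle I anticipate is the careful justification that $\Real(\phi(t))$ is bounded below by some $m > \phi(0)$ on all of $I \setminus J$ — this needs the unique-minimum hypothesis near the boundary of $J$ together with a growth statement at infinity extracted from the integrability assumption; handling the unbounded piece rigorously (rather than the routine compact estimates) is where the real content lies, but it is a standard tail argument.
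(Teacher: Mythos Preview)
Your proposal is correct and follows essentially the same approach as the paper: split the integral into a compact neighborhood of $0$ plus a tail, bound the tail by pulling out $e^{-(n-K)\delta}$ from the integrability hypothesis together with the lower bound $\Real(\phi(t)) - \phi(0) \ge \delta > 0$ on $I \setminus J$ (obtained from the unique-minimum assumption and the fact that integrability of $|e^{-K\phi(t)}|$ forces $\Real(\phi(t)) \to +\infty$ along the unbounded direction), and then invoke \Cref{th:laplace} on the compact piece. The only cosmetic difference is that the paper truncates only on the unbounded side (taking $J=[-a,b]$ when $I=[-a,+\infty)$) rather than symmetrically around $0$; your version treats all three interval shapes uniformly, which is arguably cleaner.
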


For example \cite[Proposition B1, p.~760]{FS09}, we recover the classic Stirling approximation
by applying this result to
\[
    n! =
    \int_0^{+\infty}
    t^n e^{-t} dt.
\]
After the change of variable $x = n t$, we obtain
\[
    n!
    =
    n^{n+1}
    \int_0^{+\infty}
    x^n e^{- n x} d x
    =
    n^{n+1}
    \int_0^{+\infty}
    e^{- n (x - \log(x))}
    d x,
\]
where the function $x - \log(x)$
reaches its unique minimum on $\reals_{\geq 0}$
at $x = 1$.
We introduce the function
$\phi(t) = 1 + t - \log(1 + t)$
and the change of variable $x = 1 + t$
\[
    n! =
    \int_{-1}^{+\infty}
    e^{- n \phi(t)} dt.
\]
Applying \cref{th:variant_laplace} yields
\[
    n!
    \approx
    n^n e^{-n} \sqrt{2 \pi n}
    (s_0 + s_1 n^{-1} + s_2 n^{-2} + \cdots)
\]
where for any $k \geq 0$,
\[
    s_k =
    (2k-1)!!
    [t^{2k}]
    \left( \frac{t - \log(1+t)}{t^2/2} \right)^{-k - 1/2}.
\]
A combinatorial interpretation for those coefficients
has been obtained by Wrench \cite[p.~267]{comtet2012advanced}.
Alternatively, defining
\[
    \psi(t) = \left( \frac{t - \log(1 + t)}{t^2/2} \right)^{-1/2}
    \quad \text{and} \quad
    T(x) = x \psi(T(x)),
\]
we have the formal power series representation
\[
    \tS(z) := \sum_{k \geq 0} s_k z^k =
    e^{z x^2/2} \odot_{x=1} T'(x).
\]
%%%%%%%%%%%%%%%%%% Check on Stirling formula %%%%%%%%%%%%%%%%%%%%
\begin{comment}
def stirling_coef(k):
    x = var('x')
    double_factorial = factorial(2*k) / 2**k / factorial(k)
    f = taylor((x - log(1+x)) / x**2 * 2, x, 0, 2*k)
    return double_factorial * taylor(1 / f**k / sqrt(f), x, 0, 2*k).list()[2*k]

for k in range(7):
    print(stirling_coef(k))
\end{comment}
%%%%%%%%%%%%%%%%%%%%%%%%%%%%%%%%%%%%%%%%%%%%%%%%%%%%%%%%%%%%%%%%%

    \subsection{Asymptotic expansion of regular graphs}
    \label{sec:asympt:expansion:sgkn}

Using \cref{th:had_to_int},
we replace the exponential Hadamard products
in the expression of $\sgkn$ from \cref{th:sgkn}
with integrals,
then apply elementary changes of variables
to derive an expression amenable to the Laplace method.

\begin{lemma}
\label{th:sgkn:integral}
Assume $k \geq 2$ and $n k$ even,
and define successively the power series
with rational coefficients
\begin{align*}
    B_0(u, y, \vt)
    &=
    \sum_{\ell = 1}^k
    [z^{\ell}]
    \frac{
        \left(
            1
            + \frac{u}{1 + t_1}
            \left(
                \frac{k-1}{2} \frac{y z}{(1 + t_1)^2}
                + \sum_{j = 2}^k t_j z^{j-1}
            \right)
        \right)^{k-\ell}
    }{\sqrt{1 - z^2}}
    \frac{k!}{(k - \ell)!}
    \left( \frac{u y}{1 + t_1} \right)^{\ell},
\\
    B_1(u, y, \vt)
    &=
    \exp \left(
        - \frac{\log(1 + B_0(u, y, \vt)) - k (k-1) \frac{u^2 t_2 y}{(1 + t_1)^2}}
        {y^2}
    + \frac{(k-1)^2}{4 (1 + t_1)^4}
    + (2 k^2 u^2 - k + 1) \frac{k-1}{4}
    \right),
\\
    B_2(y, \vt)
    &=
    B_1 \left( - \frac{1}{\sqrt{k}}, y, \vt \right)
    + B_1 \left( \frac{1}{\sqrt{k}}, y, \vt \right),
\\
    \phi(t)
    &=
    \frac{t^2}{2} + t - \log(1 + t).
\end{align*}
Then the number of $k$-regular graphs on $n$ vertices
is equal to
\[
    \sgkn
    =
    \frac{(n k / e)^{n k / 2} \sqrt{k}}{k!^{n - 1/2}}
    e^{- (k^2 - 1) / 4}
    \left( \frac{n}{2 \pi} \right)^{k/2}
    \int_{\reals_{> - 1} \times \reals^{k-1}}
    B_2(i n^{-1/2}, \vt)
    e^{- n k \phi(t_1) - \sum_{j=2}^k n j t_j^2 / 2}
    d \vt.
\]
\end{lemma}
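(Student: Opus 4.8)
The goal is to transform the exact formula from \cref{th:sgkn},
\[
    \sgkn =
    (-1)^{nk/2}\,
    e^{\sum_{j=1}^k x_j^2/(2j)}
    \exphad_{\vx = \vone}
    \left(
    [y^k]
    \frac{e^{- i \sum_{j=1}^k x_j y^j}}{\sqrt{1 - y^2}}
    \right)^n,
\]
into the stated parametric integral by (i) converting the exponential Hadamard products into Gaussian integrals via \cref{th:had_to_int}, and (ii) performing a sequence of changes of variables that isolate the factor $(nk/e)^{nk/2}$ and bring the $j=1$ coordinate into the form governed by the phase $\phi(t) = t^2/2 + t - \log(1+t)$. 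I would proceed in the order suggested by the nested structure of the definitions $B_0, B_1, B_2$, treating the $x_1$ variable (which carries the $1/\sqrt{1-y^2}$-type singularity and will be handled by a full Laplace phase) separately from $x_2,\dots,x_k$ (which stay Gaussian).

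\textbf{Step 1: expand the inner $[y^k]$ coefficient.} Write $[y^k]$ of the product as a finite sum over $\ell$, where $\ell$ counts the power of $y$ contributed by the exponential $e^{-i\sum_j x_j y^j}$. Expanding $e^{-i x_1 y}$ to all orders and $e^{-i\sum_{j\ge 2} x_j y^j}$ as a polynomial-in-$y$ remainder, the coefficient of $y^\ell$ from the exponential contributes $(-i x_1 y)^\ell/\ell!$-type terms times lower-degree contributions; the factor $1/(k-\ell)!\cdot k!$ and the powers of $x_1$ come from this bookkeeping. I would match this against the definition of $B_0$ after the substitution that replaces $x_1$ with something proportional to $u y/(1+t_1)$ — this is the content of identifying $B_0(u,y,\vt)$ as the properly rescaled inner coefficient.

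\textbf{Step 2: apply \cref{th:had_to_int} and rescale.} The quadratic form in the exponent is $\sum_{j=1}^k x_j^2/(2j)$, so \cref{th:had_to_int} with $\alpha_j = 1/j$ turns $\exphad_{\vx=\vone}$ into $\frac{1}{(2\pi)^{k/2}}\sqrt{1\cdot 2\cdots k}\int_{\reals^k} (\cdots) e^{-\sum_j j\, t_j^2/2}\,dt$. The $n$-th power forces a rescaling $t_j \mapsto t_j\sqrt{n}$ (equivalently one applies the Hadamard-to-integral identity to the $n$-th power, picking up the exponent $-n\sum j t_j^2/2$), which is where the global $(n/2\pi)^{k/2}$ and a $\sqrt{k!}$ appear. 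The $j=1$ coordinate is then shifted by $1$ via $t_1 \mapsto 1+t_1$; using $-nk(t_1^2/2) \to -nk\,\phi(t_1) + (\text{linear/log corrections})$ and combining the leftover $t_1^n$-type factor with $e^{-nk t_1}$ produces the $(nk/e)^{nk/2}$ prefactor — this mirrors exactly the Stirling computation done in \cref{sec:laplace}, where $\int_0^\infty x^n e^{-nx}\,dx$ was rewritten with $\phi(t)=1+t-\log(1+t)$. The extra $t_1^2/2$ inside $\phi$ here absorbs the original Gaussian weight $e^{-t_1^2/2}$ coming from $\alpha_1=1$.

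\textbf{Step 3: identify $B_1$ and symmetrize.} The logarithm in the definition of $B_1$ is precisely $\log$ of the $n$-th power of the inner coefficient collapsed down to a single factor: $(1+B_0)^n = e^{n\log(1+B_0)}$, and the scaling $y = i n^{-1/2}$ (so $y^2 = -n^{-1}$, whence division by $y^2$ restores the factor $n$) turns $n\log(1+B_0)$ into $-\log(1+B_0)/y^2$. The explicit constants $-(k^2-1)/4$, $(k-1)^2/(4(1+t_1)^4)$, the $k(k-1)u^2 t_2 y/(1+t_1)^2$ subtraction, etc., are exactly the Jacobian factors and the cross-terms generated when one completes the various shifts and rescalings — in particular the $t_2$-linear term arises because the $y^2$-coefficient in $e^{-ix_2 y^2}$ contributes a term that must be peeled off before dividing by $y^2$. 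Finally the substitution $u = \pm 1/\sqrt k$ comes from the original sign $(-1)^{nk/2}$ together with the rescaling of $x_1$: writing $x_1 \propto u y/(1+t_1)$ and choosing $u^2 = 1/k$ is what makes the Gaussian in the $x_1 \to t_1$ direction have the right variance $1/k$ (matching $\alpha_1 = 1$ after the $\sqrt k$ rescaling absorbed into $nk$); the two signs $u = \pm 1/\sqrt k$ must both be retained because $(-1)^{nk/2} = ((\pm i)^k)^n$ and the two square roots of $-1/k$ both contribute, giving $B_2 = B_1(-1/\sqrt k,\cdot) + B_1(1/\sqrt k,\cdot)$.

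\textbf{Main obstacle.} The routine-but-delicate part is Step 2--3: tracking every constant and Jacobian through the chain of substitutions (Hadamard-to-integral, $t_j \mapsto \sqrt n\, t_j$, $t_1 \mapsto 1+t_1$, $x_1 \mapsto u y/(1+t_1)$, $y \mapsto i n^{-1/2}$) so that the accumulated scalar prefactor collapses to exactly $\frac{(nk/e)^{nk/2}\sqrt k}{k!^{n-1/2}} e^{-(k^2-1)/4}(n/2\pi)^{k/2}$ and the accumulated exponent to exactly $-nk\phi(t_1) - \sum_{j\ge2} n j t_j^2/2$, with all residual terms correctly absorbed into the definition of $B_1$. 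The single genuinely non-obvious identity to verify is that $e^{-i x_1 y}$, after the substitution making $x_1$ proportional to $u/(1+t_1)$, combines with the $1/(1+t_1)$ factors and the shift to reconstruct the $-k(k-1) u^2 t_2 y/(1+t_1)^2$ correction and the $(k-1)^2/(4(1+t_1)^4)$ term inside $B_1$; this is a finite computation but is where a sign or factor-of-$2$ error would hide. I would verify it symbolically (the paper already uses computer algebra, cf. the Sage snippets) and then present only the final matching.
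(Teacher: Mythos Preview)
Your broad outline matches the paper's approach: apply \cref{th:had_to_int} to convert the Hadamard product into a $k$-dimensional Gaussian integral, rescale $t_j \mapsto \sqrt{n}\, t_j$, and then perform a chain of substitutions to reach the Laplace-ready form. However, two of your attributions are wrong and would block the proof.

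First, your explanation of the symmetrization $B_2 = B_1(-1/\sqrt k,\cdot)+B_1(1/\sqrt k,\cdot)$ is incorrect. It does not come from ``two square roots of $-1/k$'' or from the sign $(-1)^{nk/2}$. After the rescaling, the $t_1$-integrand carries the factor $t_1^{nk} e^{-n t_1^2/2}$, which (since $nk$ is even) has \emph{two} global maxima on $\reals$, at $t_1 = \sqrt k$ and $t_1 = -\sqrt k$. The paper splits the $t_1$-integral at $0$, reflects the negative half by $t_1 \mapsto -t_1$, and only then applies $t_1 \mapsto \sqrt k\,(1+t_1)$ (not merely a shift by $1$). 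The two saddle points are what give the two summands in $B_2$; the parameter $u$ packages the value $\pm 1/(\sqrt k(1+t_1))$ arising from $1/t_1$ at each saddle. Without this split the integrand is not unimodal in $t_1$ and your Step~2 cannot produce the phase $\phi$.

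Second, you miss the completion-of-the-square shift in $t_2$. Subtracting the $y^1$-term of $\log(1+A_0)$ leaves, in the exponent, a term $2i\binom{k}{2} t_1^{-2} t_2 \sqrt n$ that \emph{diverges} with $n$; ``peeling it off before dividing by $y^2$'' is not enough. The paper eliminates it by the contour shift $t_2 \mapsto t_2 + i\binom{k}{2} t_1^{-2} n^{-1/2}$, which produces the constant $-\binom{k}{2}^2 t_1^{-4}$; after $t_1 \mapsto \sqrt k(1+t_1)$ this is exactly the $\frac{(k-1)^2}{4(1+t_1)^4}$ term inside $B_1$. Without this shift there is a dangling $\sqrt n$ in the exponent and the integral is not in the form claimed.
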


To get closer to the Laplace method,
our next lemma shows that the integral outside a vicinity
of the saddle-point is negligible.

\begin{lemma}
\label{th:bound:tails}
For any $k \geq 2$
and small enough neighbordhood $V \subset \reals^k$ of the origin,
there exists $\delta > 0$
such that
the number $\sgkn$ of $k$-regular graphs on $n$ vertices
is equal to
\[
    \sgkn
    =
    \frac{(n k / e)^{n k / 2} \sqrt{k}}{k!^{n - 1/2}}
    e^{- (k^2 - 1) / 4}
    \left(
    \frac{n}{2 \pi} \right)^{k/2}
    \int_V
    B_2(i n^{-1/2}, \vt)
    e^{- n k \phi(t_1) - \sum_{j=2}^k n j t_j^2 / 2}
    d \vt
    \left( 1 + \bigO(e^{-\delta n})\right),
\]
where $B_2(y,\vt)$ is defined in \cref{th:sgkn:integral}.
\end{lemma}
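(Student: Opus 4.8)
The plan is to bound the contribution of the integral over $(\reals_{>-1} \times \reals^{k-1}) \setminus V$ and show it is exponentially smaller than the main-term prefactor times a fixed power of $n$, so that subtracting it only introduces a relative error $\bigO(e^{-\delta n})$. Concretely, writing $\Phi(\vt) = k \phi(t_1) + \sum_{j=2}^k j t_j^2/2$, the exponential factor in the integrand of \cref{th:sgkn:integral} is $e^{-n \Phi(\vt)}$. The first step is to record the elementary properties of $\Phi$: since $\phi(t) = t^2/2 + t - \log(1+t)$ has $\phi(0)=0$, $\phi'(0)=0$, $\phi''(0)=2$, and $\phi'(t) = t + t/(1+t) = t(2+t)/(1+t) > 0$ for $t>0$ and $<0$ for $-1<t<0$, the function $\phi$ is strictly positive away from $0$ on $\reals_{>-1}$, is convex near $0$, and $\phi(t) \to +\infty$ both as $t \to -1^+$ and as $t \to +\infty$ (indeed $\phi(t) \sim t^2/2$ at $+\infty$). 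Hence $\Phi(\vt)$ attains its unique minimum $0$ on $\reals_{>-1}\times\reals^{k-1}$ at the origin, and for any neighborhood $V$ of $0$ there is a constant $c = c(V) > 0$ with $\Phi(\vt) \geq c$ for all $\vt \notin V$; moreover $\Phi(\vt) \geq c'(|t_1| \wedge 1)^2 + c'' \sum_{j\geq 2} t_j^2$ type lower bounds hold globally, giving coercivity.

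The second step is to control the amplitude $B_2(in^{-1/2},\vt)$. From the definitions in \cref{th:sgkn:integral}, $B_0$ is a polynomial in $\vt$ (and in $u,y$) divided by $\sqrt{1-z^2}$-type coefficients, so $B_0(u,y,\vt)$ is polynomially bounded in $\vt$ uniformly for $u = \pm 1/\sqrt k$ and $|y| = n^{-1/2} \leq 1$; the only delicate point is that $B_1$ involves $\log(1 + B_0)/y^2$, which we must see is not singular. Here we use that the numerator $\log(1+B_0) - k(k-1) u^2 t_2 y/(1+t_1)^2$ vanishes to order $y^2$ in $y$ (this is precisely the cancellation engineered in the construction of $B_1$, and can be checked from the $y$-expansion of $B_0$), so $\log(1+B_0(u,y,\vt))/y^2$ is an analytic function of $y$ near $0$ with coefficients that are rational functions of $\vt$ whose denominators are powers of $(1+t_1)$ and of $1 + B_0$. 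On the region $\vt \notin V$ with $|t_1|$ bounded away from $-1$ we get $|B_2(in^{-1/2},\vt)| \leq C(1+\|\vt\|)^{M} e^{C'\|\vt\|^{?}}$ for suitable constants; the key is that this polynomial/controlled growth is dominated by $e^{-n\Phi(\vt)/2}$ for large $\|\vt\|$. Near the boundary $t_1 \to -1^+$ one must check that the $(1+t_1)^{-p}$ singularities in $B_2$ are killed by $e^{-nk\phi(t_1)}$, which blows up like $e^{nk|\log(1+t_1)|}$; since $e^{p\log(1+t_1)^{-1}}$-type growth is beaten by $e^{-c n\log(1+t_1)^{-1}}$ for $n$ large, this is fine. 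I would formalize this by splitting the complement of $V$ into a bounded piece (where $B_2$ is bounded and $\Phi \geq c$ outright) and the unbounded/near-boundary pieces (where coercivity of $\Phi$ wins).

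The third step assembles the estimate: on the complement of $V$,
\[
    \left| \int_{(\reals_{>-1}\times\reals^{k-1}) \setminus V} B_2(in^{-1/2},\vt)\, e^{-n\Phi(\vt)}\, d\vt \right|
    \leq e^{-(c/2) n} \int_{\reals_{>-1}\times\reals^{k-1}} |B_2(in^{-1/2},\vt)|\, e^{-n\Phi(\vt)/2}\, d\vt,
\]
and the remaining integral is $\bigO(n^{-k/2})$ (or at worst polynomially bounded in $n$) by the same coercivity plus amplitude bounds — one can even apply the integrability hypothesis in the style of \cref{th:variant_laplace} with $K = n/2$ for $n \geq$ some $n_0$. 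Meanwhile the full integral $\int_{\reals_{>-1}\times\reals^{k-1}}$ equals $\sgkn$ divided by the stated prefactor (\cref{th:sgkn:integral}), which is of order $n^{-k/2}$ up to the explicit constants — this is exactly the size delivered by the Laplace heuristic around the origin, and in any case it is bounded below by a fixed power of $n$ because the integrand is positive and bounded below near $0$ on a ball of radius $\asymp n^{-1/2}$. Dividing, the tail contributes a relative error $\bigO(e^{-(c/2)n} \cdot \mathrm{poly}(n)) = \bigO(e^{-\delta n})$ for any $\delta < c/2$, which rewrites $\sgkn$ as the integral over $V$ times $(1 + \bigO(e^{-\delta n}))$, as claimed. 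The main obstacle is the second step: verifying uniform control of $B_2(in^{-1/2},\vt)$ over the full noncompact domain, in particular checking that the $\log(1+B_0)/y^2$ term has no pole (the designed $y^2$-cancellation) and that all the $(1+t_1)$-power singularities are genuinely dominated by $e^{-nk\phi(t_1)}$; everything else is routine coercivity bookkeeping.
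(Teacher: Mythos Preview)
Your overall architecture (coercivity of $\Phi$, tail versus core, relative error) is sound, but step~2 hides a real difficulty that you have not resolved. The function $B_2(i n^{-1/2},\vt)$ is \emph{not} a benign amplitude with $n$-independent growth: by construction $B_1$ contains $\exp(-\log(1+B_0)/y^2)$, and at $y=i n^{-1/2}$ one has $1/y^2=-n$, so $B_1$ literally encodes the factor $(1+B_0)^n$ coming from the original $n$th power in the integrand. Your asserted bound $|B_2|\le C(1+\|\vt\|)^M e^{C'\|\vt\|^{?}}$ with $n$-independent constants is neither proved nor obviously true (the ``?'' is telling); for $\vt$ large enough that $|B_0(u,i n^{-1/2},\vt)|$ is of order one, the Taylor cancellation you invoke for $\log(1+B_0)/y^2$ no longer controls anything, and indeed $1+B_0$ may vanish, so the $\log$ rewriting is not even globally valid as a function. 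Separating $B_2$ from $e^{-n\Phi}$ and bounding each factor individually therefore does not work on the unbounded part of the domain.

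The paper avoids this entirely by stepping back to the integral representation \emph{before} the $B_2$ rewriting (equations \eqref{eq:sgkn:middle}--\eqref{eq:yk} in the proof of \cref{th:sgkn:integral}), where the integrand is explicitly an $n$th power of a degree-$k$ polynomial in $\vt$ times $e^{-n\sum j t_j^2/2}$. That explicit polynomial form is bounded directly using the crude inequality $(a+b+c)^n\le 3^n(a^n+b^n+c^n)$ and \cref{th:inttwo}, splitting the complement of the saddle neighborhood into three regions: $|t_1|$ small, some $|t_j|$ large, and the remaining bounded annulus. This buys a clean and elementary tail bound without ever needing uniform control of $B_2$. If you want to salvage your route, you must bound the product $B_2\cdot e^{-n\Phi}$ jointly rather than factorwise, which effectively amounts to redoing what the paper does in the original coordinates.
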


Since $e^{-\delta n}$ is exponentially small,
the asymptotic expansion of $\sgkn$
is the same as the asymptotic expansion
of the term before $(1 + \bigO(e^{-n \delta}))$.
The Laplace method is applied to extract it
and the terms are rearranged to express
the coefficients of the asymptotic expansion
as formal functions of $k$.

\begin{theorem}
\label{th:main:result}
Assume $k \geq 2$ and $n k$ is even,
then the asymptotic expansion
of the number of $k$-regular graphs
on $n$ vertices is
\[
    \sgkn
    \approx
    \frac{(n k / e)^{n k/2}}{k!^n}
    \frac{e^{- (k^2 - 1) / 4}}{\sqrt{2}}
    \tsgk(n^{-1})
\]
where for all $r$, the $r$th coefficient
of the formal power series $\tsgk(z)$
is a polynomial with rational coefficients
in $k$, divided by $k^r$,
explicitly computable using the formula
\begin{align*}
    \psi(t)
    &=
    \Bigg(
        1
        + \frac{
            \log \big( \frac{1}{1+t} \big)
            + t
            - \frac{t^2}{2}}
        {t^2}
    \Bigg)^{-1/2},
\\
    T(x)
    &=
    x \psi(T(x)),
\\
    u_{p,q}
    &=
    [s^p] \frac{1}{(1 + T(s))^q},
\\
    v_{p,q}(\vt)
    &=
    [z^p]
    \frac{\left( \sum_{j \geq 2} t_j z^{j-1} \right)^q}
    {\sqrt{1 - z^2}},
\\
    B_{0,j}(u, \vt)
    &=
    \sum_{\substack{1 \leq \ell,\ 0 \leq a \leq \ell,\ 0 \leq b\\ a + b + \ell \leq j}}
    \bigg(
        \prod_{m = 0}^{a + b + \ell - 1}
        (k - m)
    \bigg)
    \frac{u^{a + b + \ell} t_1^{j - a - b - \ell}}{a! b!}
    \left( \frac{k-1}{2} \right)^a
    u_{j - a - b - \ell, 3 a + b + \ell}
    v_{\ell - a, b}(\vt),
\\
    B_0(s, u, \vt)
    &=
    \sum_{j \geq 1}
    B_{0,j}(u, \vt) s^j
\\
    C_1(u, s, \vt)
    &=
    \exp \left(
        \frac{
            k (k-1) \frac{u^2 s^2 t_2}{(1 + T(s t_1))^2}
            - \log \left(
            1
            + B_0(s,u,\vt) \right)}
        {s^2}
    + \frac{(k-1)^2}{4 (1 + T(s t_1))^4}
    + (2 k^2 u^2 - k + 1) \frac{k-1}{4}
    \right),
\\
    C_2(s, \vt)
    &=
    \left(
    C_1 \left( - \frac{1}{\sqrt{k}}, s, \vt \right)
    + C_1 \left( \frac{1}{\sqrt{k}}, s, \vt \right)
    \right)
    T'(s t_1),
\\
    [z^r] \tsgk(z)
    &=
    (-1)^r
    e^{- t_1^2 / (4k) - \sum_{j=2}^{2r + 2} t_j^2 / (2j)}
    \odot_{\vt = \vone}
    [s^{2r}]
    C_2(s, \vt).
\end{align*}
\end{theorem}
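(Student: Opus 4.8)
The strategy is to start from the integral representation of $\sgkn$ in \cref{th:bound:tails}, which already localizes the integral to a small neighborhood $V$ of the origin and carries only a multiplicative $(1 + \bigO(e^{-\delta n}))$ error, and then apply the multidimensional Laplace method to extract the asymptotic expansion. Because the phase factorizes as $-nk\phi(t_1) - \sum_{j=2}^k n j t_j^2/2$, the integral splits: the $t_1$ variable sees the genuine phase $\phi(t_1) = t_1^2/2 + t_1 - \log(1+t_1)$, while each $t_j$ for $j \geq 2$ sees a pure Gaussian $n j t_j^2/2$. So the first step is to treat the $t_1$ direction with \cref{th:laplace} (using $\phi''(0) = 1$, $\phi(0) = 0$, which accounts for the $\sqrt{2\pi/n}$ and kills the $e^{-n\phi(0)}$ factor) and the remaining $t_2, \ldots, t_k$ directions with \cref{th:had_to_int}, converting each Gaussian integral back into an evaluated exponential Hadamard product with the appropriate weight $\alpha_j = 1/j$. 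Collecting the prefactors: the $\sqrt{k}$, the $e^{-(k^2-1)/4}$, the $k!^{-(n-1/2)}$, the $(nk/e)^{nk/2}$ from the lemma, times $\sqrt{2\pi/(k n)}$ from the $t_1$-Laplace step and $(2\pi)^{-k/2}\prod_{j=2}^k j^{-1/2} \cdot$ (Gaussian normalizations), should combine to exactly $\frac{(nk/e)^{nk/2}}{k!^n}\frac{e^{-(k^2-1)/4}}{\sqrt 2}$, leaving a clean formal power series in $n^{-1}$.

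The second step is the bookkeeping that turns the output of the Laplace machinery into the explicit formulas in the statement. From \cref{th:laplace}, the $t_1$-contribution is governed by $\psi(t) = \big(\frac{\phi(t)-\phi(0)}{\phi''(0)t^2/2}\big)^{-1/2} = \big(1 + \frac{\log(1/(1+t)) + t - t^2/2}{t^2}\big)^{-1/2}$ and the associated tree function $T(x) = x\psi(T(x))$, exactly as written; the amplitude $A$ is $B_2(i n^{-1/2}, \vt)$, so I substitute $t_1 \mapsto T(s t_1)$ (with $s$ the Laplace variable that will become $n^{-1/2}$) and multiply by $T'(s t_1)$. Expanding $B_0$ as defined in \cref{th:sgkn:integral} — a finite sum over $\ell$ of coefficient extractions in $z$ of a product of a power of a linear-in-$z$ polynomial with $(1-z^2)^{-1/2}$ — I need to re-expand it in the composed variable; the coefficients $u_{p,q} = [s^p](1+T(s))^{-q}$ capture the powers of $(1+t_1)^{-1}$ after the substitution $t_1 = T(s t_1)$ rescaled, and $v_{p,q}(\vt) = [z^p]\frac{(\sum_{j\ge 2} t_j z^{j-1})^q}{\sqrt{1-z^2}}$ captures the $t_j$-dependent part; the multinomial expansion of the inner bracket of $B_0$ raised to the $(k-\ell)$ power, distributing over the $\frac{k-1}{2}\frac{yz}{(1+t_1)^2}$ term (contributing the $a$ index and the $(\frac{k-1}{2})^a$ factor and an extra $(1+t_1)^{-2a}$, hence the $3a$ in the subscript $3a+b+\ell$) versus the $\sum_{j\ge2} t_j z^{j-1}$ term (contributing $b$ and the $v$ factor), together with the falling factorial $\prod_{m=0}^{a+b+\ell-1}(k-m)$ from $\frac{k!}{(k-\ell)!}$ times the multinomial coefficients, yields precisely $B_{0,j}$. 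Then $C_1$ is the image of $B_1$ under the substitution $y \mapsto s$, $t_1 \mapsto T(st_1)$, and $u = \pm 1/\sqrt k$ becomes the argument; $C_2$ packages the two choices of sign (coming from $B_2$) and the Jacobian $T'(s t_1)$. Finally, the surviving Gaussian integrals in $t_1, \ldots$ become the operator $e^{-t_1^2/(4k) - \sum_{j\ge2} t_j^2/(2j)}\odot_{\vt=\vone}$ via \cref{th:had_to_int} (note $\alpha_1 = 1/(2k)$ because the $t_1$-Gaussian after the Laplace substitution has variance governed by $\phi''(0) = 1$ but the original weight $nk$ gives $\alpha_1 = 1/(k)$, and the extra factor of $2$ comes from how $z = n^{-1}$ relates to the $s = n^{-1/2}$ variable, so $z x^2/(2\phi''(0))$ with $\phi'' = 1$ and the $k$ from $nk\phi$ give $1/(4k)$); the $(-1)^r$ comes from $y = i n^{-1/2}$, so $y^2 = -n^{-1}$, and extracting $[z^r]$ in $z = n^{-1}$ means extracting $[s^{2r}]$ with a sign $(-1)^r$. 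The truncation index $2r+2$ on the Gaussian product is because $[s^{2r}]C_2$ only involves $t_j$ with $j \leq 2r+2$ (each $t_j$ enters with a $z^{j-1}$, so degree $2r$ in $s$ limits $j$).

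The main obstacle I expect is not any single estimate but the verification that the substitution $y \mapsto i n^{-1/2}$ is legitimate inside the Laplace expansion and that combining it with the $t_1$-Laplace step commutes correctly — i.e., that the formal power series $\tF$ produced by \cref{th:laplace} applied to the amplitude $B_2(i n^{-1/2}, \vt)$, which itself depends on $n$ through $n^{-1/2}$, can be reorganized into a single power series in $n^{-1}$ with the stated coefficients. Concretely, $B_2(i n^{-1/2}, \vt)$ is \emph{not} a fixed amplitude but an $n$-dependent one, so one must expand it in powers of $n^{-1/2}$ first, apply the Laplace method coefficient-by-coefficient to each term (which is justified since the expansion is uniform on the compact $V$ and each term is analytic), and then check that odd powers of $n^{-1/2}$ cancel — this cancellation is forced by the symmetry $u \mapsto -u$ built into $B_2$ and by parity of the Gaussian integrals in $t_2, \ldots, t_k$, but it needs to be argued carefully. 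A secondary technical point is confirming that $\psi(t)$ and hence $T(x)$ have the claimed rationality (guaranteed by the choice $\phi''(0) = 1$, so no square roots of $\phi''(0)$ intrude) and that the final coefficients are genuinely polynomials in $k$ over $k^r$ — this follows by tracking that every occurrence of $k$ enters polynomially (falling factorials, the explicit $(k-1)$'s, the $(2k^2 u^2 - k+1)$ which becomes $2k - k + 1$ at $u^2 = 1/k$) except for the $u^{a+b+\ell}$ with $u = \pm1/\sqrt k$, whose even total power after the sign-symmetrization contributes the $k^{-r}$-type denominators, together with the explicit $1/(4k)$ in the Hadamard weight.
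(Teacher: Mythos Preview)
Your proposal is correct and follows essentially the same route as the paper: start from \cref{th:bound:tails}, apply the Laplace method of \cref{th:laplace} in the $t_1$ direction and Gaussian/Hadamard reductions (\cref{th:had_to_int}) in the remaining directions, then perform the substitution $t_1 \mapsto T(st_1)$ and expand $B_0$ multinomially to reach $B_{0,j}$, $C_1$, $C_2$, and the final Hadamard formula. Two small corrections: first, $\phi''(0) = 2$, not $1$ (since $\phi''(t) = 1 + (1+t)^{-2}$), and it is precisely $2\cdot\phi''(0)\cdot k = 4k$ that appears in the $t_1$-weight; second, the paper dispatches the vanishing of odd powers of $n^{-1/2}$ more cheaply by noting that $\sgkn \in \reals$ forces the imaginary terms in the expansion to cancel, rather than via the $u\mapsto -u$ symmetry and Gaussian parity argument you outline (though yours would also work).
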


    \subsection{Computations and numerical experiment}
    \label{sec:numerical}

We published sagemath code for computing
the coefficients of the formal power series $\tsg(z)$
at \cite{mygitlab}.
This repository contains more coefficients than provided here.
From a computational perspective,
the fact that the definition $T(x) = x \psi(T(x))$
is implicit is not an inconvenience.
Coefficients of this series
are computable by Lagrange inversion or,
more efficiently, by Newton iteration
\cite{pivoteau2012algorithms}.
For example, since
\[
    u_{p,q} =
    [s^p] H(T(s))
\]
with $H(x) = \frac{1}{(1 + x)^q}$
and $T(x) = x \psi(T(x))$,
Lagrange inversion implies
\[
    u_{p,q}
    =
    \frac{1}{p}
    [s^{p-1}]
    H'(s) \psi(s)^p
    =
    - \frac{q}{p}
    [s^{p-1}]
    \frac{\psi(s)^p}{(1 + s)^{q + 1}}.
\]

For the first few error terms
in the result of \cref{th:main:result},
we find (assuming $k \geq 3$ to shorten the expressions)
\begin{align*}
    [z^0] \tsgk(z) =\ 
&
    2,
\\
    [z^1] \tsgk(z) =\ 
&
    - \frac{1}{6} (k^4 - 2 k^2 + 3 k - 1)/k,
\\
    [z^2] \tsgk(z) =\ 
&
    \frac{1}{144} (k^8 - 16 k^6 + 6 k^5 + 50 k^4 + 36 k^3 - 239 k^2 + 234 k - 71)/k^2.
\end{align*}
\begin{comment}
\\
%
    [z^3] \tsgk(z) =\ 
&
    -\frac{1}{25920} (5 k^12 - 210 k^10 + 45 k^9 + 2421 k^8 - 13525 k^6 - 8550 k^5 - 3627 k^4 + 212175 k^3 - 406785 k^2 + 291285 k - 74237)/k^3
\\
%
    [z^4] \tsgk(z) =\ 
&
\frac{1}{1244160} (5 k^16 - 400 k^14 + 60 k^13 + 10324 k^12 - 1080 k^11 - 117490 k^10 - 38628 k^9 + 592718 k^8 + 1651020 k^7 - 1117652 k^6 + 2080908 k^5 - 71270855 k^4 + 210488940 k^3 - 253241474 k^2 + 141479364 k - 30519787)/k^4
\\
%
    [z^5] \tsgk(z) =\ 
&
-\frac{1}{104509440} (7 k^20 - 910 k^18 + 105 k^17 + 41909 k^16 - 5040 k^15 - 897778 k^14 - 88956 k^13 + 9892290 k^12 + 10395042 k^11 - 53187694 k^10 - 124841178 k^9 - 518756595 k^8 + 1289706516 k^7 - 2834974132 k^6 + 53349609369 k^5 - 211011732932 k^4 + 365366843730 k^3 - 327550202980 k^2 + 149918330457 k - 27845002807)/k^5
\end{comment}
For $k \in \{3, 4, 5\}$, we deduce
\begin{align*}
    \tsg^{(3)}(z)
&=
    2 - \frac{71}{18} z - \frac{143}{1296} z^2
    + \bigO(z^3),
\\
    \tsg^{(4)}(z)
&=
    2 - \frac{235}{24} z + \frac{18289}{2304} z^2
    + \bigO(z^3),
\\
    \tsg^{(5)}(z)
&=
    2 - \frac{589}{30} z + \frac{190249}{3600} z^2
    + \bigO(z^3).
\end{align*}
Our theorem states
\[
    \left(
    \sgkn
    \left(
        \frac{(nk/e)^{nk/2}}{k!^n}
        \frac{e^{-(k^2-1)/4}}{\sqrt{2}}
    \right)^{-1}
    -
    \sum_{j = 0}^{r-1}
    [z^j] \tsgk(z)
    n^{-j}
    \right)
    n^r
    =
    \bigO(1).
\]
To provide numerical credibility,
we check in the next table
that for $r=3$ and each $k \in \{2, 3, 4, 5\}$,
the left-hand side appears to be a bounded function of $n$
for $n \in [10,100]$.
We used the tables from \cite[A002829, A005815, A338978]{oeis}.
\[
    \begin{array}{c|cccccccccc}
n & 10 & 20 & 30 & 40 & 50 & 60 & 70 & 80 & 90 & 100
\\
\hline
k=2&
1.79 &
1.79 &
1.80 &
1.80 &
1.79 &
1.79 &
1.79 &
1.79 &
1.79 &
1.79
\\
k=3 &
5.04 &
4.05 &
3.79 &
3.66 &
3.60 &
3.55 &
3.52 &
3.50 &
3.48 &
3.46
\\
k=4 &
17.93 &
15.37 &
14.75 &
14.47 &
14.31 &
14.21 &
14.14 &
14.08 &
14.04 &
14.01
\\
k=5 &
2.16 &
3.59 &
4.36 &
4.75 &
4.98 &
5.13 &
5.24 &
5.32 &
5.38 &
5.43
\end{array}
\]

        \section{Connected regular graphs}
        \label{sec:connected:regular:graphs}

It is known since \cite{Wright70}
that the asymptotic expansion of regular graphs
gives access to the asymptotic expansion of connected regular graphs.
In this section, we detail this process.
Since a connected $2$-regular graph
is just a cycle, we will assume in this section $k \geq 3$.
Let $\csg^{(k)}_n$ denote the number
of connected $k$-regular graphs on $n$ vertices.
The associated generating function is
\[
    \csg^{(k)}(z) =
    \sum_{n \geq 0}
    \csg^{(k)}_n
    \frac{z^n}{n!}.
\]
A $k$-regular graphs is a set of connected $k$-regular graphs,
so the generating functions of $k$-regular graphs
and of connected $k$-regular graphs
are linked by the classic relation
\[
    \sg^{(k)}(z) =
    e^{\csg^{(k)}(z)},
\]
and
\begin{equation}
\label{eq:csg:log}
    \csg^{(k)}(z) =
    \log(\sg^{(k)}(z)).
\end{equation}
Notice that in the definition of
\[
    \sg^{(k)}(z) =
    \sum_{n \geq 0}
    \sg^{(k)}_n
    \frac{z^n}{n!},
\]
we admit the empty graph with $n = 0$ vertices.
This ensures that $\sg^{(k)}(0) = 1$,
so \cref{eq:csg:log} properly characterizes
the formal power series $\csg^{(k)}(z)$.

Given the asymptotics of the coefficients of $\sgkn$,
the associated generating function $\sg^{(k)}(z)$
has a zero radius of convergence.
Our next section presents tools for extracting
asymptotic expansion of the coefficients
of such divergent series,
based on the work of \cite{Wright70} and \cite{Be75}.

    \subsection{Divergent series}
    \label{sec:divergent:series}

Let us first recall a key tool for the computation
of the asymptotic expansion of fast growing coefficients
of formal power series, due to \cite{Wright70} and \cite{Be75}.

\begin{theorem}[\cite{Wright70}, \cite{Be75}]
\label{th:divergent:series}
Consider a function $H(z)$ analytic at $z=0$,
a positive integer $R$
and a formal power series
\[
    A(z) =
    \sum_{n > 0}
    a_n z^n
\]
whose coefficients satisfy
$a_n \neq 0$ for all sufficiently large $n$,
$a_{n-1} = \smallo(a_n)$,
and
\[
    \sum_{j = R}^{n - R} a_j a_{n-j} = \bigO(a_{n-R}),
\]
then
\[
    [z^n] H(A(z)) =
    \sum_{j = 0}^{R - 1}
    c_j a_{n-j}
    + \bigO(a_{n-R}),
\]
where
\[
    c_j =
    [z^j]
    H'(A(z)).
\]
\end{theorem}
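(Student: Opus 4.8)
The plan is to prove \cref{th:divergent:series} by manipulating the identity $H(A(z)) = \sum_{m \geq 0} \frac{H^{(m)}(0)}{m!} A(z)^m$ and tracking contributions to $[z^n]$ term by term, using the growth hypotheses to discard negligible pieces. First I would record the two elementary consequences of the hypotheses $a_{n-1} = \smallo(a_n)$ and $\sum_{j=R}^{n-R} a_j a_{n-j} = \bigO(a_{n-R})$: namely that $a_{n-j} = \smallo(a_{n-j'})$ whenever $j > j'$, hence $a_{n-R} = \smallo(a_{n-j})$ for $j < R$, and more importantly a bound on coefficients of powers of $A$. Specifically, I would show by induction on $m \geq 1$ that for each fixed $m$,
\[
    [z^n] A(z)^m = m \sum_{j=0}^{R-1} \big([z^j] A(z)^{m-1}\big)\, a_{n-j} + \bigO(a_{n-R}),
\]
where the $\bigO$ constant depends on $m$. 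The base case $m=1$ is trivial. For the inductive step, write $A^m = A \cdot A^{m-1}$, so $[z^n]A^m = \sum_{i=0}^{n} a_i\, [z^{n-i}]A^{m-1}$; split the sum into $i < R$, the symmetric range $n-i < R$ (i.e. $i > n-R$), and the middle range $R \le i \le n-R$. The first range contributes $\sum_{i=0}^{R-1} a_i [z^{n-i}]A^{m-1}$, to which I apply the induction hypothesis to each $[z^{n-i}]A^{m-1}$ and then collect; the third range gives the complementary $\sum_{j<R}([z^j]A)\cdot(\cdots)$ term by symmetry; and the middle range is controlled by the hypothesis $\sum_{j=R}^{n-R} a_j a_{n-j} = \bigO(a_{n-R})$ together with the fact that the coefficients of $A^{m-1}$ are, up to a constant, dominated by those of $A$ in the relevant regime (this last point needs a small lemma of its own — see below).

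Granting the coefficient formula for $A^m$, the conclusion follows by summing over $m$ with weights $H^{(m)}(0)/m!$. Since $H$ is analytic at $0$ and $A(0)=0$, the composition $H(A(z))$ is a well-defined formal power series and for any fixed $n$ only finitely many $m \le n$ contribute to $[z^n]$; moreover the $\bigO(a_{n-R})$ error terms, each scaled by $H^{(m)}(0)/m!$ and with $m$-dependent constants, can be summed because the analyticity of $H$ makes $\sum_m |H^{(m)}(0)|/m! \cdot (\text{const})^m$ converge for the relevant radius — one has to check the induction keeps the constant growing at most geometrically in $m$, which it does. Collecting the main terms gives
\[
    [z^n] H(A(z)) = \sum_{j=0}^{R-1}\Big(\sum_{m\ge 1}\tfrac{H^{(m)}(0)}{(m-1)!}[z^j]A(z)^{m-1}\Big) a_{n-j} + \bigO(a_{n-R}) = \sum_{j=0}^{R-1} c_j\, a_{n-j} + \bigO(a_{n-R}),
\]
since $\sum_{m \ge 1}\frac{H^{(m)}(0)}{(m-1)!}A^{m-1} = \sum_{m'\ge 0}\frac{H^{(m'+1)}(0)}{m'!}A^{m'} = H'(A(z))$, whose $j$th coefficient is $c_j$.

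I expect the main obstacle to be the uniformity in $m$: the hypothesis $\sum_{j=R}^{n-R} a_j a_{n-j} = \bigO(a_{n-R})$ is only stated for the product $A \cdot A$, and in the inductive step I need an analogous control for $A^{m-1} \cdot A$. The fix is to prove, as a preliminary lemma, that for each fixed $p\ge 1$ there is a constant $K_p$ with $|[z^i]A(z)^p| \le K_p\, a_i$ for all $i$ large enough (intuitively, the largest index dominates in a product of $p$ series with rapidly growing coefficients); this follows by an induction on $p$ using the same three-way range split, and then the middle-range estimate $\sum_{R \le j \le n-R} |[z^{j}]A^{m-1}|\, a_{n-j} \le K_{m-1}\sum_{R\le j\le n-R} a_j a_{n-j} = \bigO(a_{n-R})$ goes through, with $K_p$ controlled geometrically in $p$. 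Once this bookkeeping is set up the rest is routine, and the statement of \cref{th:divergent:series} drops out by assembling the pieces and re-indexing the sum over $m$.
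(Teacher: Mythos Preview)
The paper does not prove \cref{th:divergent:series}; it is stated as a result of \cite{Wright70} and \cite{Be75} and used as a black box. So there is no ``paper's own proof'' to compare against, and I evaluate your proposal on its merits.

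Your outline is the standard Bender-style argument: expand $H(A)$ in powers of $A$, control $[z^n]A^m$ by an induction on $m$ via the three-range split, and then reassemble. The preliminary lemma $|[z^i]A^{p}|\le K_p\,a_i$ with $K_p$ geometric is exactly the right auxiliary tool, and your observation that the main term vanishes for $m>R$ (because $[z^j]A^{m-1}=0$ for $j<m-1$) is what makes the identification of $c_j$ go through cleanly. Two points deserve care, however.

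First, the preliminary lemma is stated only for ``$i$ large enough'', yet in the middle range you apply it for $n-i$ as small as $R$. This is harmless once you note that for each fixed $j\in[R,j_0)$ there are only finitely many $m$ with $[z^j]A^{m-1}\neq 0$ (namely $m\le j+1$), so enlarging $K_p$ by a fixed constant covers all small indices uniformly in $m$.

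Second, and more substantively, your summation over $m$ has a gap. You obtain error constants $E_m$ that grow at most geometrically, say $E_m\le L^m$, and then assert that analyticity of $H$ makes $\sum_{m}|H^{(m)}(0)|/m!\cdot L^m$ converge. But analyticity only gives $|H^{(m)}(0)|/m!\le M\rho^{-m}$ for the radius $\rho$ of $H$, and nothing in the hypotheses forces $L<\rho$; the base $L$ depends on $C_0$, on $\sum_{j<R}|a_j|$, and on the constants from your preliminary lemma, none of which are tied to $\rho$. The standard fix is a rescaling: replace $A(z)$ by $A(\lambda z)$ for small $\lambda>0$. Both hypotheses are preserved (the ratio $a_{n-1}/a_n$ is multiplied by $\lambda^{-1}$, and the middle-sum bound scales by $\lambda^R$), the conclusion is scale-covariant (both sides pick up $\lambda^n$), and all the constants entering $L$ tend to $0$ with $\lambda$, so one can force $L<\rho$. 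With that addition your argument is complete.
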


In order to use the previous theorem,
we have to check that $\sgkn$
satisfies the hypothesis
\[
    \sum_{j = R}^{n - R} \sgk_j \sgk_{n-j}
    = \bigO(\sgk_{n-R}).
\]
The next lemma simplifies this task,
requiring only information
on the main asymptotics of $\sgkn$.

\begin{lemma}
\label{th:alpha:beta:gamma}
Consider positive $\alpha$, $\beta$, a real value $\gamma$
and a positive sequence $(a_n)_{n > 0}$ satisfying
$a_n = \exactbigO(n^{\alpha} \beta^n n^{\gamma})$,
then for any fixed $R \in \integers_{> 0}$,
as $n$ tends to infinity, we have
\[
    \sum_{j=R}^{n-R} a_j a_{n-j} =
    \bigO(a_{n-R}).
\]
\end{lemma}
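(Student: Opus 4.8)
The plan is to split the sum at its two ends and a bulk middle region, and show the middle contributes the same order as the boundary. Write $a_n = \exactbigO(n^\alpha \beta^n n^\gamma)$, so there are constants $0 < c_1 \le c_2$ with $c_1 f(n) \le a_n \le c_2 f(n)$ for $f(n) = n^\alpha \beta^n n^\gamma$ and $n$ large; here I am reading $n^\alpha$ as a super-exponential factor such as $n!^{\alpha}$ or $\Gamma$-type growth (the notation in the lemma suggests $a_n$ grows faster than exponentially, which is exactly the case for $\sgkn$). The key elementary fact is that for such rapidly growing sequences, $a_j a_{n-j}$ as a function of $j \in [R, n-R]$ is maximized at the two endpoints $j = R$ and $j = n-R$, where its value is $\exactbigO(a_R a_{n-R}) = \exactbigO(a_{n-R})$ since $a_R$ is a constant. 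So the entire difficulty is controlling how fast $a_j a_{n-j}$ decays as $j$ moves away from the endpoints into the bulk.

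First I would reduce to the model sequence: by the two-sided bound it suffices to prove $\sum_{j=R}^{n-R} f(j) f(n-j) = \bigO(f(n-R))$, equivalently (dividing by $f(n-R)$ and using $f(n-R) = \exactbigO(f(n))$) that $\sum_{j=R}^{n-R} \frac{f(j) f(n-j)}{f(n)} = \bigO(1)$. Then I would estimate the ratio $\rho_j := f(j) f(n-j) / f(n)$. For the super-exponential part, the crucial super-multiplicativity $n^\alpha \ge j^\alpha (n-j)^\alpha \cdot (\text{large factor})$ gives that $\rho_j$ is bounded by $C \cdot r^{j}$ for some fixed $r < 1$ when $R \le j \le n/2$, uniformly in $n$ — this is because splitting off one factor from the "$n!^\alpha$-type" term costs a geometric-in-$j$ penalty, which dominates the mild polynomial fluctuations from $j^\gamma$ and $(n-j)^\gamma$. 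By symmetry the same bound (in $n-j$) holds on the upper half. Hence
\[
    \sum_{j=R}^{n-R} \rho_j
    \le 2 \sum_{j = R}^{\lceil n/2 \rceil} \rho_j
    \le 2 C \sum_{j \ge R} r^{j}
    = \bigO(1),
\]
which is the desired bound.

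I expect the main obstacle to be making the inequality $\rho_j \le C r^j$ fully rigorous and uniform in $n$: one must carefully track the interplay of the three factors $n^\alpha$, $\beta^n$, and $n^\gamma$ under the split $n = j + (n-j)$. The exponential part $\beta^n = \beta^j \beta^{n-j}$ is exactly multiplicative and contributes nothing. The polynomial part $n^\gamma$ versus $j^\gamma (n-j)^\gamma$ can go either way depending on the sign of $\gamma$, but is at worst polynomially large or small in $n$, hence harmless against a genuine geometric decay. The real work is the super-exponential factor: one shows $\frac{j^\alpha (n-j)^\alpha}{n^\alpha}$ (interpreting $x^\alpha$ as the appropriate $\Gamma$-type growth) decays at least geometrically in $\min(j, n-j)$, e.g. via $\binom{n}{j}^{-\alpha} \le \binom{n}{R}^{-\alpha} \cdot (\text{geometric})$ type estimates, or directly from Stirling. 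Once that monotone geometric bound is in hand, the endpoint terms $j \in \{R, \dots, 2R\}$ and $j \in \{n-2R, \dots, n-R\}$ account for the $\bigO(a_{n-R})$, the geometric tail sums to a constant, and the lemma follows. A subtle point to state explicitly is uniformity: the constants $C$ and $r$ must not depend on $n$, only on $\alpha, \beta, \gamma, R$, which the above estimates indeed provide.
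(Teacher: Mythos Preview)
You correctly sense that the statement contains a typo: the intended growth is $a_n = \Theta(n^{\alpha n}\beta^n n^\gamma)$, not $n^\alpha$, as the paper's own proof and all applications confirm. Your instinct to read it as super-exponential is right.

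However, there is a genuine gap. You write that proving $\sum_{j=R}^{n-R} f(j)f(n-j) = \bigO(f(n-R))$ is ``equivalently (dividing by $f(n-R)$ and using $f(n-R) = \Theta(f(n))$)'' the same as $\sum_j \rho_j = \bigO(1)$ with $\rho_j = f(j)f(n-j)/f(n)$. But $f(n-R) = \Theta(f(n))$ is \emph{false}: with $f(n) = n^{\alpha n}\beta^n n^\gamma$ one has
\[
    \frac{f(n-R)}{f(n)} = \Theta\bigl(n^{-\alpha R}\bigr) \to 0.
\]
So the correct reformulation is $\sum_j \rho_j = \bigO(n^{-\alpha R})$, which is strictly stronger than $\bigO(1)$. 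Your proposed bound $\rho_j \le C r^j$ uniformly in $n$, even if established, sums only to a constant and therefore yields $\sum f(j)f(n-j) = \bigO(f(n))$, too weak by exactly the factor $n^{\alpha R}$. The geometric picture misses that for small $j$ the decay is much faster than geometric: $\rho_{j+1}/\rho_j \asymp (j/n)^\alpha$, which is $\Theta(n^{-\alpha})$ near $j=R$ but only $\approx 1$ near $j=n/2$, so a single uniform ratio cannot capture the initial drop that produces the $n^{-\alpha R}$.

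The paper's fix is clean and avoids any geometric estimate: it observes that $j \mapsto \log f(j) + \log f(n-j)$ is convex for $j \ge \gamma/\alpha$ (a two-line second-derivative check), so $f(j)f(n-j)$ on the middle range $[J,n-J]$ is bounded by its endpoint value $f(J)f(n-J)$. Choosing the fixed integer $J \ge \max(\gamma/\alpha,\, R + 1/\alpha)$ gives $f(J)f(n-J) = \bigO(n^{-\alpha J} f(n)) = \bigO(n^{-1} f(n-R))$, so the $\le n$ middle terms contribute $\bigO(f(n-R))$ in total. The finitely many boundary terms $j \in [R, J-1]$ are each $\bigO(a_{n-R})$ directly. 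Your approach can be repaired along similar lines (normalize by $f(n-R)$, not $f(n)$, and treat a fixed boundary layer separately), but as written the reduction step is where it breaks.
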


The following lemma provides the asymptotic expansion
of $a_{n-j}$ given the asymptotic expansion of $a_n$,
provided it is of a special form
(satisfied by $\sgkn$).

\begin{lemma}
\label{th:an_j}
Consider $\alpha \in \integers_{> 0}$,
$\beta \in \reals_{> 0}$
and $\gamma \in \reals$,
and a sequence $(a_n)_n$
with asymptotic expansion
\[
    a_n \approx
    n^{\alpha n}
    \beta^n
    n^{\gamma}
    \tA(n^{-1})
\]
for some nonzero formal power series $\tA(z)$.
Define the formal power series
\[
    \tA_j(z) =
    e^{- \alpha j}
    \beta^{-j}
    z^{\alpha j}
    (1 - j z)^{\gamma - \alpha j}
    e^{\alpha z^{-1} (\log(1 - j z) + j z)}
    \tA \left( \frac{z}{1 - j z} \right).
\]
Then for any fixed $j \in \integers_{\geq 0}$,
we have as $n$ tends to infinity
\[
    a_{n-j} \approx
    n^{\alpha n}
    \beta^n
    n^{\gamma}
    \tA_j(n^{-1}).
\]
\end{lemma}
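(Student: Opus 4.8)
The plan is to reduce the statement to a direct manipulation of the asymptotic expansion of $a_n$ under the substitution $n \mapsto n-j$, tracking how each of the three factors $n^{\alpha n}$, $\beta^n$, $n^\gamma$ and the power series factor $\tA(n^{-1})$ transforms. Since $j$ is fixed, this is really a statement about Taylor expanding functions of $n-j$ around $n$, and the main work is bookkeeping to show the result can be repackaged as $n^{\alpha n}\beta^n n^\gamma$ times a genuine formal power series in $n^{-1}$, namely $\tA_j(n^{-1})$.

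First I would write $a_{n-j} \approx (n-j)^{\alpha(n-j)} \beta^{n-j} (n-j)^\gamma \tA((n-j)^{-1})$ — this is just the given asymptotic expansion evaluated at $n-j$, valid for each fixed $r$ as $n \to \infty$. Then I would treat each factor separately. For $\beta^{n-j}$ we immediately get $\beta^{-j}\beta^n$. For $(n-j)^{\alpha(n-j)}$, write $(n-j)^{\alpha(n-j)} = n^{\alpha(n-j)}(1 - j/n)^{\alpha(n-j)} = n^{\alpha n} \cdot n^{-\alpha j} \cdot \exp\big(\alpha(n-j)\log(1-j/n)\big)$; setting $z = n^{-1}$, the exponent is $\alpha(z^{-1}-j)\log(1-jz)$, and I would reorganize this as $\alpha z^{-1}\log(1-jz) + (\text{bounded-order correction})$. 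The key point is that $z^{-1}\log(1-jz) = -j - j^2 z/2 - j^3 z^2/3 - \cdots$ is a formal power series in $z$ (with no singular part), so after pulling out the contribution $e^{-\alpha j}$ corresponding to the constant term, $\exp(\alpha z^{-1}(\log(1-jz)+jz))$ is a well-defined formal power series in $z$ with constant term $1$. Combined with the $n^{-\alpha j}$ factor, which becomes $z^{\alpha j}$, this explains the $e^{-\alpha j}\beta^{-j} z^{\alpha j} e^{\alpha z^{-1}(\log(1-jz)+jz)}$ part of $\tA_j$. Next, $(n-j)^\gamma = n^\gamma(1-j/n)^\gamma = n^\gamma(1-jz)^\gamma$; multiplying this against the $n^{-\alpha j}$ already extracted gives the combined factor $z^{\alpha j}(1-jz)^{\gamma-\alpha j}$ once one also absorbs the $n^{-\alpha j}(1-jz)^{-\alpha j}$ coming from rewriting $\exp(\alpha(n-j)\log(1-jz))$ more carefully — i.e. I would be slightly more careful and split $\alpha(n-j)\log(1-jz)$ as $\alpha n\log(1-jz) - \alpha j \log(1-jz)$, the second piece contributing $(1-jz)^{-\alpha j}$ and the first being handled by the $z^{-1}$-expansion above. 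Finally, $\tA((n-j)^{-1}) = \tA\big(z/(1-jz)\big)$ since $(n-j)^{-1} = n^{-1}/(1 - j n^{-1})$, and $z/(1-jz)$ is a formal power series in $z$ with zero constant term, so the composition $\tA(z/(1-jz))$ is a well-defined formal power series. Assembling all the pieces yields exactly the stated $\tA_j(z)$.

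The one genuine subtlety — and the step I expect to be the main obstacle to write cleanly rather than hard mathematically — is the passage from "asymptotics valid to each fixed order $r$ in $(n-j)^{-1}$" to "asymptotics valid to each fixed order $r$ in $n^{-1}$." Because $\tA$ may be divergent, one cannot simply substitute a power series for another; instead I would fix $r$, truncate the expansion of $a_{n-j}$ at order $r$ with an $\bigO((n-j)^{-r}) = \bigO(n^{-r})$ error, and verify that each of the finitely many transformations above (factoring $\beta^{-j}$, expanding $(1-j/n)^{\gamma-\alpha j}$ and $(1-j/n)^{\alpha(n-j)}$ via $\log$, re-expanding $\tA$'s truncation composed with $z/(1-jz)$, and multiplying these finite expansions together) is an operation on polynomials-in-$n^{-1}$-plus-$\bigO(n^{-r})$ that preserves this form, with the matching between the resulting polynomial coefficients and $[z^\ell]\tA_j(z)$ for $\ell < r$ being exactly the corresponding formal-power-series identity. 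I would note that the $e^{\alpha z^{-1}(\log(1-jz)+jz)}$ factor is the only one involving an a priori "$z^{-1}$" and emphasize that its argument is genuinely a power series in $z$, so no cancellation of poles is needed; this is where the precise choice of grouping in the exponent matters. Everything else is routine composition and multiplication of formal power series, and I would simply state that these operations are well-defined (citing the formal-power-series framework already invoked in the paper for $e^{F(z)}$ with $F(0)=0$) and leave the coefficient extraction to the reader or to the accompanying code.
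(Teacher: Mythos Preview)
Your proposal is correct and follows exactly the same route as the paper: substitute $n \mapsto n-j$ in the given expansion, factor each of $(n-j)^{\alpha(n-j)}$, $\beta^{n-j}$, $(n-j)^{\gamma}$, $\tA((n-j)^{-1})$ against the target scale $n^{\alpha n}\beta^n n^{\gamma}$, and recognise the remaining factors as the formal power series $\tA_j(z)$ evaluated at $z=n^{-1}$. The paper's proof is in fact a terse three-line display doing precisely this computation, without spelling out the order-$r$ truncation argument or the well-definedness of $e^{\alpha z^{-1}(\log(1-jz)+jz)}$ that you (correctly) identify and handle.
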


Applying \cref{th:alpha:beta:gamma}
and \cref{th:an_j}
to \cref{th:divergent:series},
we obtain the tool we will apply
to extract the asymptotic expansion of $\csgk_n$
for $k$ even.

\begin{proposition}
\label{th:HA}
Consider a function $H(z)$ analytic at $0$
and a formal power series
\[
    A(z) = \sum_{n > 0} a_n z^n
\]
whose coefficients satisfy
\[
    a_n \approx
    n^{\alpha n}
    \beta^n
    n^{\gamma}
    \tA(n^{-1})
\]
for some $\alpha \in \integers_{> 0}$,
$\beta \in \reals_{> 0}$,
$\gamma \in \reals$
and nonzero formal power series $\tA(z)$,
then
\[
    [z^n] H(A(z)) \approx
    n^{\alpha n}
    \beta^n
    n^{\gamma}
    \tA_H(n^{-1})
\]
where the formal power series $\tA_H(z)$
is defined as
\begin{align*}
    \tA_H(z) &=
    \sum_{j \geq 0}
    \tA_j(z)
    [x^j] H'(A(x)),
\\
    \tA_j(z) &=
    e^{- \alpha j}
    \beta^{-j}
    z^{\alpha j}
    (1 - j z)^{\gamma - \alpha j}
    e^{\alpha z^{-1} (\log(1 - j z) + j z)}
    \tA \left( \frac{z}{1 - j z} \right).
\end{align*}
\end{proposition}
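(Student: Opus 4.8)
The plan is to assemble \cref{th:HA} directly from the three preceding results: \cref{th:divergent:series} (Wright--Bender), \cref{th:alpha:beta:gamma} (the sum bound from main asymptotics), and \cref{th:an_j} (the asymptotic expansion of the shifted coefficients $a_{n-j}$). First I would verify that the hypotheses of \cref{th:divergent:series} are met. Since $a_n \approx n^{\alpha n}\beta^n n^\gamma \tA(n^{-1})$ with $\tA$ nonzero, there is a first index $p$ with $[z^p]\tA(z) \neq 0$, and then $a_n \sim [z^p]\tA(z)\, n^{\alpha n}\beta^n n^{\gamma - p}$; in particular $a_n \neq 0$ for all sufficiently large $n$, and $a_{n-1}/a_n \to 0$ because of the super-exponential factor $n^{\alpha n}$ (the ratio is $\sim e^{-\alpha}(n/(n-1))^{\gamma-p}((n-1)/n)^{\alpha n}\beta^{-1} \to 0$). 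It remains to check the quadratic-sum condition $\sum_{j=R}^{n-R} a_j a_{n-j} = \bigO(a_{n-R})$ for some positive integer $R$. Here I would invoke \cref{th:alpha:beta:gamma}: the main asymptotics give $a_n = \exactbigO(n^{\alpha n + \gamma - p}\beta^n)$, but \cref{th:alpha:beta:gamma} as stated is for $a_n = \exactbigO(n^\alpha \beta^n n^\gamma)$ without the $n^{\alpha n}$ factor, so I would either apply it to the normalized sequence $b_n = a_n / n^{\alpha n}$ after checking that $n^{\alpha n}$ is "log-convex enough" that $j^{\alpha j}(n-j)^{\alpha(n-j)} \leq n^{\alpha n}$-type bounds hold and the cross terms are dominated by the endpoints — this is exactly the kind of estimate that makes super-exponential sequences behave well under such sums — or, more cleanly, note that the proof of \cref{th:alpha:beta:gamma} extends verbatim. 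I will state which route the paper takes; most likely \cref{th:alpha:beta:gamma} is invoked after absorbing $n^{\alpha n}$, so the main obstacle is making that reduction precise.

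Granting the quadratic-sum condition for a suitable $R$, \cref{th:divergent:series} yields, for every $R$,
\[
    [z^n] H(A(z)) =
    \sum_{j=0}^{R-1} c_j\, a_{n-j} + \bigO(a_{n-R}),
    \qquad c_j = [x^j] H'(A(x)).
\]
Now I would substitute the asymptotic expansion of each $a_{n-j}$ from \cref{th:an_j}: $a_{n-j} \approx n^{\alpha n}\beta^n n^\gamma \tA_j(n^{-1})$ with $\tA_j$ as in the statement. The point is that $\bigO(a_{n-R}) = \bigO(n^{\alpha n}\beta^n n^{\gamma - p} \cdot \text{something})$, and one must track the $n$-power carefully: since $\tA_j(z) = e^{-\alpha j}\beta^{-j} z^{\alpha j}(1-jz)^{\gamma - \alpha j}e^{\alpha z^{-1}(\log(1-jz)+jz)}\tA(z/(1-jz))$, the factor $z^{\alpha j}$ means $\tA_j$ vanishes to order $\alpha j$ at $0$, so $a_{n-j}$ is smaller than $a_n$ by a factor of order $n^{-\alpha j}$ (consistent with $a_{n-j}/a_n \sim e^{-\alpha j}\beta^{-j}n^{-\alpha j}$ to leading order). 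Hence for any target truncation order $m$, choosing $R$ large enough that $\alpha R > m$ (and also at least the $R$ needed for the quadratic bound — take the max) makes the error term $\bigO(a_{n-R}) = \bigO(n^{\alpha n}\beta^n n^{\gamma} \cdot n^{-m-1})$, negligible at order $m$. Summing the contributions $c_j \tA_j(z)$ over $0 \leq j \leq R-1$ and observing that $[x^j]H'(A(x)) = c_j$ identifies the finite partial sums with truncations of $\sum_{j\geq 0}\tA_j(z)[x^j]H'(A(x)) = \tA_H(z)$; since this holds for arbitrarily large truncation order, we get $[z^n]H(A(z)) \approx n^{\alpha n}\beta^n n^\gamma \tA_H(n^{-1})$ as claimed. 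I should also remark that $\tA_H(z)$ is a well-defined formal power series: for each fixed power $z^m$, only finitely many $j$ (those with $\alpha j \leq m$) contribute because $\tA_j(z) = \bigO(z^{\alpha j})$, so the sum is coefficientwise finite.

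The main obstacle, as flagged above, is the bridge from \cref{th:alpha:beta:gamma} — stated for merely exponential growth $\beta^n n^\gamma$ — to our genuinely super-exponential $n^{\alpha n}$ setting, i.e. confirming the quadratic-sum hypothesis of \cref{th:divergent:series}. I expect the resolution to be short: either \cref{th:alpha:beta:gamma} is formulated or proved in a way that already covers this (and one simply cites it), or one writes $a_j a_{n-j} = n^{\alpha n}\beta^n \cdot (j/n)^{\alpha j}(1-j/n)^{\alpha(n-j)} \cdot (\text{polynomially-bounded})$ and checks the concavity of $x \mapsto x\log x$ on $(0,1)$ forces the product $(j/n)^{\alpha j}((n-j)/n)^{\alpha(n-j)}$ to decay exponentially in $\min(j, n-j)$ away from the endpoints, so the sum over $R \leq j \leq n-R$ is dominated — up to a geometric series — by its two boundary terms, each of which is $\bigO(a_{n-R})$ after accounting for the polynomial factors. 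Everything else (verifying $a_n \neq 0$ eventually, $a_{n-1} = \smallo(a_n)$, the coefficientwise finiteness of $\tA_H$, and the order-of-magnitude bookkeeping for the error term) is routine once the quadratic bound is in hand.
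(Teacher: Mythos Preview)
Your approach is exactly the paper's: the proof there is the single sentence ``Direct injection of \cref{th:alpha:beta:gamma} and \cref{th:an_j} into \cref{th:divergent:series},'' and you have spelled out precisely that injection together with the bookkeeping that justifies it.

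The one concern you flag --- that \cref{th:alpha:beta:gamma} appears to be stated for $a_n = \exactbigO(n^{\alpha}\beta^n n^{\gamma})$ rather than $n^{\alpha n}\beta^n n^{\gamma}$ --- is a typo in the lemma's statement, not a genuine gap. If you look at the proof of \cref{th:alpha:beta:gamma}, it sets $f(x) = x^{\alpha x}\beta^x x^{\gamma}$ from the very first line and uses the log-convexity of $x\mapsto \alpha x\log x$ throughout; the lemma is precisely tailored to the super-exponential regime $n^{\alpha n}$. So there is nothing to bridge: \cref{th:alpha:beta:gamma} applies directly to $(a_n)$, and your fallback argument (log-convexity of $j\mapsto j^{\alpha j}(n-j)^{\alpha(n-j)}$ forcing the sum to concentrate at the endpoints) is in fact exactly how that lemma is proved. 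Your normalization idea $b_n = a_n/n^{\alpha n}$, by contrast, would not work, since the lemma's proof genuinely relies on the super-exponential factor for the convexity.
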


Observe that $\tA_j(z)$ has valuation
at least $z^{\alpha j}$,
so each coefficient of $\tA_H(z)$
is computable with a finite sum
\[
    [z^{\ell}] \tA_H(z) =
    \sum_{j=0}^{\lfloor \ell / \alpha \rfloor}
    [z^{\ell}]
    \tA_j(z)
    [x^j]
    H'(A(x)).
\]

When $k$ is odd, $\sgkn = 0$ for all odd $n$.
Thus, the assumption from \cref{th:divergent:series}
that the coefficients are eventually positive
is not satisfied in that case.
However, as shown in the next corollary,
we can work around this difficulty.
Indeed, for $k$ odd, we can work with $\sgk(\sqrt{z})$
instead of $\sgk(z)$.
It is a formal power in $z$
with coefficients eventually positive,
so our last proposition is applicable.

\begin{corollary}
\label{th:divergent:even}
Consider a function $H(z)$ analytic at $0$
and a formal power series
\[
    B(z) = \sum_{n > 0} b_n z^n
\]
whose coefficients satisfy $b_n = 0$
for all odd $n$, and for $n$ even
\[
    b_n \approx
    n^{\alpha n}
    \beta^n
    n^{\gamma}
    \tB(n^{-1})
\]
for some $\alpha \in \frac{1}{2} \integers_{> 0}$,
$\beta \in \reals_{> 0}$,
$\gamma \in \reals$
and nonzero formal power series $\tB(z)$.
Then for $n$ even
\[
    [z^n] H(B(z)) \approx
    n^{\alpha n}
    \beta^n
    n^{\gamma}
    \tB_H(n^{-1})
\]
where the formal power series $\tB_H(z)$
is defined as
\begin{align*}
    \tB_H(z) &=
    \sum_{j \geq 0}
    \tB_{2j}(z)
    [x^{2j}] H'(B(x)),
\\
    \tB_j(z) &=
    e^{- \alpha j}
    \beta^{-j}
    z^{\alpha j}
    (1 - j z)^{\gamma - \alpha j}
    e^{\alpha z^{-1} (\log(1 - j z) + j z)}
    \tB \left( \frac{z}{1 - j z} \right).
\end{align*}
\end{corollary}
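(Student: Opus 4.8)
The plan is to reduce \cref{th:divergent:even} to \cref{th:HA} by the substitution $z \mapsto \sqrt{z}$, as the paragraph preceding the corollary suggests. First I would set $C(w) = B(w^2) = \sum_{m > 0} b_{2m} w^{2m}$, but to match the hypotheses of \cref{th:HA} I actually need a power series whose $m$th coefficient (not $2m$th) carries the asymptotic data. So instead define $D(w) = \sum_{m > 0} d_m w^m$ with $d_m := b_{2m}$; then $D(z) = B(\sqrt z)$ as formal power series in $z$ (only integer powers of $z$ appear since $b_n = 0$ for odd $n$), and $H(B(z)) = H(D(z^{\,?}))$—wait, more precisely $H(B(w))$ has only even-degree terms in $w$, so writing $w = \sqrt z$ gives $H(B(\sqrt z)) = H(D(z))$ as a power series in $z$, and hence $[z^n] H(B(z)) = [w^{2n}] H(B(w)) = [z^n]\, H(D(z))$ after the substitution. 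Thus $b_n = d_{n/2}$ for $n$ even and $[z^n] H(B(z)) = [z^{n/2}] H(D(z))$, reducing everything to the coefficients of $D$.

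Next I would verify that $(d_m)_m$ satisfies the hypothesis of \cref{th:HA}. By assumption, for $n$ even, $b_n \approx n^{\alpha n}\beta^n n^\gamma \tB(n^{-1})$; substituting $n = 2m$ and writing everything in terms of $m$, I would extract constants $\alpha' \in \integers_{>0}$, $\beta' \in \reals_{>0}$, $\gamma' \in \reals$ and a nonzero series $\tB'(z)$ so that $d_m \approx m^{\alpha' m}\beta'^m m^{\gamma'} \tB'(m^{-1})$. Concretely $\alpha' = 2\alpha$ (which is a positive integer precisely because $\alpha \in \tfrac12\integers_{>0}$ — this is where that hypothesis is used), while $\beta'$ absorbs the factors $2^{\alpha n} = (2^{2\alpha})^m$ etc., $\gamma' = \gamma$, and $\tB'(z)$ is obtained from $\tB$ by the change of variable $n^{-1} = (2m)^{-1}$, i.e. $\tB'(z) = \tB(z/2)$ up to the bookkeeping of the $n^{\alpha n}$ and $n^\gamma$ prefactors expanded via $(2m)^{\gamma} = 2^\gamma m^\gamma$ and $(2m)^{2\alpha m} = m^{2\alpha m}(2m)^{2\alpha m}/m^{2\alpha m}$; the cross term $(2m)^{2\alpha m} / m^{2\alpha m} = 2^{2\alpha m} \cdot$ (no further $m$-dependence), so it folds cleanly into $\beta'$. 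Then \cref{th:HA} applies to $D$ and $H$, yielding $[z^m] H(D(z)) \approx m^{\alpha' m}\beta'^m m^{\gamma'} \tB'_H(m^{-1})$ with $\tB'_H(z) = \sum_{j\ge 0}\tB'_j(z)\,[x^j]H'(D(x))$ and $\tB'_j$ the series from \cref{th:HA}.

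Finally I would translate back: set $m = n/2$, so $[z^n]H(B(z)) = [z^{n/2}]H(D(z)) \approx (n/2)^{\alpha'(n/2)} \beta'^{n/2} (n/2)^{\gamma'} \tB'_H((n/2)^{-1})$, and re-expand all prefactors in $n$. The powers $(n/2)^{\alpha' n/2} = (n/2)^{\alpha n}$ recombine with $\beta'^{n/2}$ and the constants that were shunted into $\beta'$ to give back exactly $n^{\alpha n}\beta^n n^\gamma$, and $\tB'_H((n/2)^{-1})$, after the substitution $z \mapsto 2z$ inside the series and tracking the residual $n$-power and constant factors, must be shown to equal $\tB_H(n^{-1})$ with $\tB_H(z) = \sum_{j\ge0}\tB_{2j}(z)[x^{2j}]H'(B(x))$ as stated. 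The key identification is that $[x^j]H'(D(x)) = [x^{2j}]H'(B(x))$ (again since $B$ has only even-degree terms) and that $\tB'_j(z)$, after the $z\mapsto 2z$ rescaling, becomes $\tB_{2j}(n^{-1})$ — i.e. $\tB'_j$ built from $(\alpha',\beta',\gamma',\tB')$ at argument $(n/2)^{-1}$ coincides with $\tB_{2j}$ built from $(\alpha,\beta,\gamma,\tB)$ at argument $n^{-1}$, where $\tB_j$ is the formula displayed in the corollary. I expect the main obstacle to be precisely this last bookkeeping step: checking that every factor $e^{-\alpha j}$, $\beta^{-j}$, $z^{\alpha j}$, $(1-jz)^{\gamma-\alpha j}$, $e^{\alpha z^{-1}(\log(1-jz)+jz)}$ and $\tB(z/(1-jz))$ in $\tB_{2j}(z)$ matches the rescaled $\tB'_j$ — in particular the exponential-of-logarithm term, where $z \mapsto 2z$ and $j \mapsto 2j$ must conspire correctly with the factor-of-$2$ shifts in $\alpha$ versus $\alpha'$. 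This is routine but error-prone algebra; once it checks out, the exponentially small corrections are absorbed harmlessly into the asymptotic expansion as noted in \cref{sec:def}, and the statement follows.
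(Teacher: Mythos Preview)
Your approach is correct and essentially identical to the paper's: set $a_m = b_{2m}$ (your $d_m$), observe $A(z) = B(\sqrt z)$, apply \cref{th:HA} with parameters $(2\alpha,\,2^{2\alpha}\beta^2,\,\gamma)$ and $\tA(z) = 2^\gamma\tB(z/2)$, then translate back via $[z^n]H(B(z)) = [z^{n/2}]H(A(z))$ and the identification $[x^j]H'(A(x)) = [x^{2j}]H'(B(x))$. The paper makes the ``error-prone algebra'' you anticipate explicit through the single clean observation $\tB_{2j}(z/2) = \tA_j(z)$, which is the tidiest way to organize that bookkeeping step.
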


    \subsection{Asymptotic expansion of connected regular graphs}
    \label{sec:asympt:csgkn}

Applying \cref{th:HA} (if $k$ is even)
and \cref{th:divergent:even} (if $k$ is odd)
to the expression of $\csgkn$
from \cref{eq:csg:log},
we finally express the asymptotic expansion
of connected regular graphs.

\begin{theorem}
\label{th:csgkn}
For any $k \geq $,
the number $\csgkn$ of connected $k$-regular graphs
on $n$ vertices has asymptotic expansion
\[
    \csgkn \approx
    \frac{(nk/e)^{nk/2}}{k!^n}
    \frac{e^{-(k^2-1)/4}}{\sqrt{2}}
    \tcsgk(n^{-1})
\]
where the formal power series $\tcsgk(z)$
is computed using the following equations
\begin{align*}
    \psi(t) &=
    \left( \frac{t - \log(1+t)}{t^2/2} \right)^{-1/2},
\\
    T(x) &= x \psi(T(x)),
\\
    \tS(z) &= e^{z x^2/2} \odot_{x=1} T'(x),
\\
    f_{k,j}(z) &=
    \begin{cases}
        1 & \text{if } j = 0,
    \\
        \sum_{\ell \geq 3}
        \indic_{k = \ell}
        \indic_{j\, \ell \text{ even}}
        z^{(\ell/2-1) j}
    & \text{if } j > 0,
    \end{cases}
\\
    \tA^{(k)}(z) &= \frac{\tsgk(z)}{\tS(z)},
\\
    \tA^{(k)}_j(z) &=
    \left( \frac{k!}{k^{k/2}} \right)^j
    f_{k,j}(z)
    (1 - j z)^{-1/2 - (k/2 - 1) j}
    e^{(k/2 - 1) z^{-1} (\log(1 - j z) + j z)}
    \tA^{(k)} \left( \frac{z}{1 - j z} \right),
\\
    \tcsgk(z) &=
    \tS(z)
    \sum_{j \geq 0}
    \tA^{(k)}_j(z)
    [x^j]
    \frac{1}{\sgk(x)}.
\end{align*}
\end{theorem}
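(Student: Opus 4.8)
The theorem is essentially a translation of the relation $\csgk(z) = \log(\sgk(z))$ into the language of asymptotic expansions, using the divergent-series machinery already set up.  First I would record the main asymptotics of $\sgkn$ that come out of \cref{th:main:result}: writing $\sgkn \approx b_n \tsgk(n^{-1})$ with
\[
    b_n = \frac{(nk/e)^{nk/2}}{k!^n}\frac{e^{-(k^2-1)/4}}{\sqrt{2}},
\]
I would re-express $b_n$ in the normalised form $n^{\alpha n}\beta^n n^\gamma$ demanded by \cref{th:HA} and \cref{th:divergent:even}.  Since $(nk/e)^{nk/2} = n^{(k/2)n} (k/e)^{(k/2)n}$, the right parameters are $\alpha = k/2$, $\beta = (k/e)^{k/2}/k!$, and $\gamma = 0$; the remaining constant factor $e^{-(k^2-1)/4}/\sqrt 2$ and the correction from $\tsgk$ get folded into the formal power series.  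Concretely, I would set $\tA^{(k)}(z)$ so that $\sgkn \approx n^{\alpha n}\beta^n n^\gamma\, \big(\tfrac{e^{-(k^2-1)/4}}{\sqrt 2}\tsgk(n^{-1})\big)$; comparing with the definition in the statement, where $\tA^{(k)}(z)=\tsgk(z)/\tS(z)$, I would use the Stirling expansion $n!\approx n^n e^{-n}\sqrt{2\pi n}\,\tS(n^{-1})$ from \cref{sec:laplace} to absorb the factor $k!^{-n}$ and the various powers of $n$ and of $e$ — this is exactly the role of $\tS$, and the bookkeeping here is the part most prone to sign and exponent errors.

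Second, I would verify the hypotheses of the divergent-series toolkit.  The condition $a_{n-1}=\smallo(a_n)$ is immediate from the super-exponential growth $n^{\alpha n}$.  The quadratic convolution bound $\sum_{j=R}^{n-R} \sgk_j\sgk_{n-j} = \bigO(\sgk_{n-R})$ follows from \cref{th:alpha:beta:gamma} once we know $\sgkn = \exactbigO(n^{\alpha n}\beta^n n^\gamma)$, which is again \cref{th:main:result}.  For $k$ even, all $\sgkn$ are eventually positive and I would apply \cref{th:HA} directly with $H(z)=\log(1+z)$ acting on $A(z)=\sgk(z)-1$ (shifting so that $A(0)=0$), noting $H'(A(z))=1/\sgk(z)$; this yields
\[
    \csgkn = [z^n]\log(\sgk(z)) \approx n^{\alpha n}\beta^n n^\gamma\, \tA_H(n^{-1}),
    \qquad
    \tA_H(z)=\sum_{j\ge0}\tA^{(k)}_j(z)\,[x^j]\tfrac{1}{\sgk(x)},
\]
with $\tA^{(k)}_j$ the specialisation of the $\tA_j$ from \cref{th:HA} to $\alpha=k/2$, $\beta=(k/e)^{k/2}/k!$, $\gamma=0$, and $\tA^{(k)}=\tsgk/\tS$.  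For $k$ odd, $\sgkn=0$ on odd $n$, so \cref{th:HA} does not apply verbatim; instead I would pass to $B(z)=\sgk(\sqrt z)-1$, whose coefficients are eventually positive, apply \cref{th:divergent:even}, and then substitute back.  This explains why only the even-$j$ terms and the parity indicator $\indic_{j\ell\text{ even}}$ survive in the definition of $f_{k,j}$: the odd-index contributions vanish, and the surviving powers of $z$ are governed by the first nonzero term of $\sgk$ of order $\ell$, namely $z^{(\ell/2-1)j}$ after the square-root change of variables.

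Third, I would reconcile the two cases into the single formula in the statement.  The point is that $f_{k,j}(z)$ is a uniform device: for $k$ even it equals $1$ for all $j$ (all $\sgkn$ with $n$ even are nonzero, contributing no extra power of $z$), while for $k$ odd it picks out the parity and the leading exponent; multiplying $\tA^{(k)}_j$ by $f_{k,j}$ thus gives the correct series in both regimes.  I would then simplify the factor $e^{-\alpha j}\beta^{-j}$ appearing in $\tA_j$: with $\beta=(k/e)^{k/2}/k!$ one has $e^{-\alpha j}\beta^{-j} = e^{-(k/2)j}\big(k!/(k/e)^{k/2}\big)^j = \big(k!/k^{k/2}\big)^j$, which is exactly the prefactor in the definition of $\tA^{(k)}_j$, and likewise $(1-jz)^{\gamma-\alpha j}=(1-jz)^{-(k/2)j}$ combines with the extra $(1-jz)^{-1/2}$ coming from the $\tS(z/(1-jz))$ inside $\tA^{(k)}(z/(1-jz))=\tsgk(\cdot)/\tS(\cdot)$ to give the stated exponent $-1/2-(k/2-1)j$.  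The last step is to multiply through by $\tS(n^{-1})$ to convert back from the normalised form to the form stated in the theorem, i.e. $\csgkn\approx b_n\,\tcsgk(n^{-1})$ with $\tcsgk(z)=\tS(z)\tA_H(z)$, matching the displayed definition.

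\textbf{Main obstacle.}  The genuine content is light — everything is a corollary of \cref{th:main:result}, Stirling, and the Wright–Bender results already quoted — so the difficulty is entirely in the algebraic bookkeeping: correctly identifying $(\alpha,\beta,\gamma)$, carrying the constant $e^{-(k^2-1)/4}/\sqrt2$ and the Stirling series $\tS$ through the normalisation, and checking that the odd-$k$ square-root substitution produces exactly the indicator-and-power structure packaged in $f_{k,j}$.  I expect the parity case $k$ odd, and in particular the verification that substituting $z\mapsto\sqrt z$ and back leaves the final formula in the claimed uniform shape, to be the fiddliest point.
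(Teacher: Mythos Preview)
Your overall strategy matches the paper's --- use $\csgk(z)=\log(\sgk(z))$, feed the asymptotic expansion of the coefficients into the Wright--Bender machinery (\cref{th:HA} for $k$ even, \cref{th:divergent:even} for $k$ odd), and unpack.  But there is a genuine bookkeeping error that propagates through the rest of your computation.

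The sequence to which \cref{th:HA} is applied is $a_n=[z^n]A(z)$ with $A(z)=\sgk(z)-1$.  Since $\sgk(z)$ is the exponential generating function, this means $a_n=\sgkn/n!$, not $\sgkn$.  The paper therefore first divides the expansion of $\sgkn$ by the Stirling expansion $n!\approx n^n e^{-n}\sqrt{2\pi n}\,\tS(n^{-1})$, obtaining
\[
    a_n^{(k)}\ \approx\ n^{(k/2-1)n}\left(\frac{e(k/e)^{k/2}}{k!}\right)^{\!n} n^{-1/2}\,\frac{e^{-(k^2-1)/4}}{2\sqrt{\pi}}\,\frac{\tsgk(n^{-1})}{\tS(n^{-1})},
\]
so the correct parameters are $\alpha=k/2-1$, $\beta=e(k/e)^{k/2}/k!$, $\gamma=-1/2$, and this is where $\tA^{(k)}=\tsgk/\tS$ actually originates.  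Your choice $\alpha=k/2$, $\gamma=0$ corresponds to $\sgkn$ itself and is inconsistent with the series $A(z)$ you feed into \cref{th:HA}.

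This mismatch surfaces in two places.  First, your claim that $f_{k,j}(z)=1$ for $k$ even is false: for $j>0$ and $k$ even one has $f_{k,j}(z)=z^{(k/2-1)j}$, which is precisely the factor $z^{\alpha j}$ from \cref{th:HA} with the paper's $\alpha=k/2-1$ (the role of $f_{k,j}$ is to package $\indic_{kj\text{ even}}\,z^{(k/2-1)j}$ as a formal function of $k$, nothing more).  Second, your attempt to recover the exponent $-1/2-(k/2-1)j$ of $(1-jz)$ by pulling a factor $(1-jz)^{-1/2}$ out of $\tS(z/(1-jz))$ does not work: $\tS$ is a formal power series with $\tS(0)=1$, not a power of $1-jz$.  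With the correct parameters $\gamma-\alpha j=-1/2-(k/2-1)j$ drops out directly, and likewise the exponential factor $e^{(k/2-1)z^{-1}(\log(1-jz)+jz)}$ in the statement matches $\alpha=k/2-1$ rather than your $k/2$.  At the end one multiplies $[z^n]H(A(z))=\csgkn/n!$ back by $n!$, which restores a factor $\tS(n^{-1})$ and explains the outer $\tS(z)$ in $\tcsgk(z)=\tS(z)\sum_j\tA^{(k)}_j(z)[x^j]\sgk(x)^{-1}$.
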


Since $\sgkn = 0$ for all $n \in [2, k]$, we have
\[
    \tcsgk(z) =
    \tS(z) \tA^{(k)}_0(z) +
    \sum_{j \geq k+1}
    \tA^{(k)}_j(z)
    [x^j] \frac{1}{\sgk(x)}.
\]
We replace $\tA^{(k)}_0(z)$ with its expression
$\frac{\tsg^{(k)}(z)}{\tS(z)}$.
Observe that for any $j \geq k+1$,
the valuation of $\tA^{(k)}_j(z)$
is equal to the valuation of $f_{k,j}(z)$,
which is $(k+1)(k-2)/2$
for $j = k+1$, and greater for $j > k+1$.
Thus,
\[
    \tcsgk(z) =
    \tsg^{(k)}(z) +
    \exactbigO \left(z^{(k+1)(k-2)/2} \right).
\]
This implies the following theorem,
a more precise version of the result
of \cite{Bollobas} and \cite{wormald1981asymptotic},
stating that $\sgkn$ and $\csgkn$
have the same asymptotics.

\begin{theorem}
\label{th:link:asympt}
For any fixed $k \geq 3$,
the numbers $\sgkn$ and $\csgkn$
of $k$-regular graphs and connected $k$-regular graphs
on $n$ vertices are linked by the relation
\[
    \csgkn =
    \sgkn
    \left(1 + \exactbigO(n^{- (k+1)(k-2)/2}) \right).
\]
\end{theorem}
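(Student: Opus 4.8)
The plan is to deduce \cref{th:link:asympt} directly from the structural description of $\tcsgk(z)$ obtained in \cref{th:csgkn} and the discussion immediately preceding this statement, which establishes that
\[
    \tcsgk(z) = \tsgk(z) + \exactbigO\left(z^{(k+1)(k-2)/2}\right).
\]
Granting that identity, the argument is essentially a translation between a statement about formal power series and a statement about the asymptotic expansions they encode. First I would record that both $\sgkn$ and $\csgkn$ admit asymptotic expansions with the \emph{same} prefactor $\frac{(nk/e)^{nk/2}}{k!^n}\frac{e^{-(k^2-1)/4}}{\sqrt 2}$, namely $\sgkn \approx P_n \tsgk(n^{-1})$ and $\csgkn \approx P_n \tcsgk(n^{-1})$ where $P_n$ denotes that common prefactor; this is exactly the content of \cref{th:main:result} and \cref{th:csgkn}.

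Next I would form the difference. Set $m = (k+1)(k-2)/2$ and write $\tcsgk(z) - \tsgk(z) = \sum_{\ell \geq m} d_\ell z^\ell$ with $d_\ell \in \reals$ (the coefficients being the ones made explicit through the $\tA^{(k)}_j$ with $j \geq k+1$). For any fixed $r \geq m$, applying the definition of $\approx$ to both $\sgkn$ and $\csgkn$ at truncation order $r$ gives
\[
    \csgkn - \sgkn
    = P_n\left( \sum_{\ell=0}^{r-1} \big([z^\ell]\tcsgk(z) - [z^\ell]\tsgk(z)\big) n^{-\ell} + \bigO(n^{-r}) \right)
    = P_n\left( \sum_{\ell=m}^{r-1} d_\ell n^{-\ell} + \bigO(n^{-r}) \right),
\]
where the first $m$ terms of the sum vanish because $[z^\ell]\tcsgk(z) = [z^\ell]\tsgk(z)$ for $\ell < m$. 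Taking $r = m+1$ yields $\csgkn - \sgkn = P_n\big(d_m n^{-m} + \bigO(n^{-m-1})\big)$, and since $\sgkn \approx P_n \tsgk(n^{-1})$ with $[z^0]\tsgk(z) = 2 \neq 0$, we have $\sgkn = P_n(2 + \bigO(n^{-1}))$, hence $P_n = \frac{1}{2}\sgkn(1 + \bigO(n^{-1}))$. Substituting gives $\csgkn - \sgkn = \frac{d_m}{2}\sgkn\, n^{-m}(1 + \bigO(n^{-1}))$, i.e. $\csgkn = \sgkn(1 + \exactbigO(n^{-m}))$, which is the claim — provided $d_m \neq 0$, so that the bound is tight in the $\exactbigO$ (i.e. $\Theta$) sense rather than merely $\bigO$.

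The main obstacle is precisely verifying $d_m \neq 0$, that is, that the coefficient $[z^{(k+1)(k-2)/2}]\,\tcsgk(z)$ genuinely differs from the corresponding coefficient of $\tsgk(z)$. By the preceding discussion this coefficient equals the leading coefficient of $\tA^{(k)}_{k+1}(z)$ times $[x^{k+1}]\frac{1}{\sgk(x)}$; since $\sgkn = 0$ for $n \in [2,k]$ and $\sg^{(k)}_{k+1}$ is the (nonzero) number of $k$-regular graphs on $k+1$ vertices (the complete graph $K_{k+1}$, plus possibly others), one has $[x^{k+1}]\frac{1}{\sgk(x)} = -\sg^{(k)}_{k+1}/(k+1)! \neq 0$, while the leading coefficient of $\tA^{(k)}_{k+1}(z)$ is the nonzero constant $\big(\frac{k!}{k^{k/2}}\big)^{k+1}$ times $\tA^{(k)}(0) = \tsgk(0)/\tS(0) = 2$ times the leading coefficient of $f_{k,k+1}(z)$, which is $1$. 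Hence $d_m \neq 0$. If one only wanted the $\bigO$ version this step could be skipped; for the stated $\exactbigO$ it must be included, but it reduces to the elementary observation that $\sg^{(k)}_{k+1} \geq 1$. I would also remark that for odd $k$ the statement is understood along even $n$ (as $\sgkn = \csgkn = 0$ for odd $n$), the corollary \cref{th:divergent:even} having supplied the expansion in that case, so no separate argument is needed.
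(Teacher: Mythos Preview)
Your proposal is correct and follows essentially the same route as the paper: both argue that $\tcsgk(z)-\tsgk(z)$ has valuation exactly $(k+1)(k-2)/2$ via the decomposition $\tcsgk(z)=\tS(z)\sum_{j\geq 0}\tA^{(k)}_j(z)\,[x^j]\frac{1}{\sgk(x)}$ together with $\sgk_n=0$ for $1\leq n\leq k$, and then translate this into the $\exactbigO(n^{-(k+1)(k-2)/2})$ statement through the common prefactor $P_n$. Your write-up is in fact more careful than the paper's on two points: you spell out the passage from ``$\approx$'' to the $\exactbigO$ relation, and you explicitly check $d_m\neq 0$ (via $\sg^{(k)}_{k+1}\geq 1$), which the paper leaves implicit. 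One tiny slip: when you identify $d_m$ you omit the factor $\tS(0)$ coming from the global $\tS(z)$ in front of the sum in \cref{th:csgkn}; since $\tS(0)=T'(0)=1$ this has no effect on the argument.
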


    \subsection{Computations and numerical experiment}
    \label{sec:connected:numerical}

We published sagemath code for computing
the coefficients of the formal power series $\tcsg(z)$
at \cite{mygitlab}.
This repository contains more coefficients than provided here.
Let us call \emph{truncation of order $r$}
of the formal power series $F(z)$
the series
\[
    \sum_{j = 0}^r [x^j] F(x) z^j.
\]
With the notations of \cref{th:csgkn},
to express the error term $[z^r] \tcsgk(z)$,
we successively compute
\begin{enumerate}
\item
the truncation of order $2r$ of $\psi(t)$,
\item
the truncation of order $2r+1$ of $T(x)$,
\item
the truncation of order $r$ of $\tS(z)$,
\item
the truncation of order $r$ of $f_{k,j}(z)$,
where the sum can be stopped at $\ell = 2 r + 2$,
\item
the truncation of order $r$ of $\tA^{(k)}(z)$,
\item
the truncation of order $r$ of $\tA^{(k)}_j(z)$
for $j$ in $[0, 2r]$,
\item
the truncation of order $r$ of $\tcsgk(z)$,
where the sum can be stopped at $j = 2r$.
\end{enumerate}
This last point is justified by the observation
that $f_{k,j}(z)$ has valuation at least $j/2$
for any $k \geq 3$,
so $\tA^{(k)}_j(z)$ has valuation at least $j/2$.
The result is a function of $k$ and
$\sgk_n$ for $n \in [4, 2r]$.
We find for the first few error terms from \cref{th:csgkn}
\begin{align*}
    [z^0] \tcsgk(z) =\
& 
    2,
\\
    [z^1] \tcsgk(z) =\ 
&
-\frac{1}{6} (k^4 - 2 k^2 + 3 k - 1) k^{-1},
\\
    [z^2] \tcsgk(z) =\ 
&
\frac{1}{144} (k^8 - \indic_{k=3} 12 (k!)^4 k^{2-2k} \sgk_4 - 16 k^6 + 6 k^5 + 50 k^4 + 36 k^3 - 239 k^2 + 234 k - 71) k^{-2}.
\end{align*}
We used the assumption $k \geq 3$
to deduce $\sgkn = 0$ for $n \in \{1, 2, 3\}$
and reduce the expressions of the coefficients.
In particular, for $k \in \{3, 4, 5\}$, we have
\begin{align*}
    \tcsg^{(3)}(z)
&=
    2 - \frac{71}{18} z - \frac{335}{1296} z^2 
    + \bigO(z^3),
\\
    \tcsg^{(4)}(z)
&=
    2 - \frac{235}{24} z + \frac{18289}{2304} z^2
    + \bigO(z^3),
\\
    \tcsg^{(5)}(z)
&=
    2 - \frac{589}{30} z + \frac{190249}{3600} z^2
    + \bigO(z^3).
\end{align*}
Observe that, as predicted by \cref{th:link:asympt},
the only difference between
$\tcsg^{(k)}(z)$ and $\tsg^{(k)}(z)$ for $k \in \{3,4,5\}$
is the term $[z^2] \tcsg^{(3)}(z)$.
\cref{th:csgkn} states
\[
    \left(
        \csgkn
        \left(
            \frac{(n k / e)^{n k / 2}}{k!^n}
            \frac{e^{-(k^2-1)/4}}{\sqrt{2}}
        \right)^{-1}
        - \sum_{j=0}^{r-1}
        [z^j] \tcsgk(n^{-1})
    \right)
    n^r
    =
    \bigO(1).
\]
As in \cref{sec:numerical},
we check that for $r=3$ and each $k \in \{3, 4\}$,
the left-hand side appears to be a bounded function of $n$
for $n \in [10,100]$.
We used the tables from \cite[A004109, A272905]{oeis}.
\[
    \begin{array}{c|cccccccccc}
n & 10 & 20 & 30 & 40 & 50 & 60 & 70 & 80 & 90 & 100
\\
\hline
k=3 &
4.40 &
2.05 &
2.15 &
2.26 &
2.30 &
2.31 &
2.31 &
2.31 &
2.31 &
2.31
\\
k=4 &
17.93 &
15.37 &
14.75 &
14.47 &
14.31 &
14.20 &
14.14 &
14.08 &
14.04 &
14.01
\end{array}
\]

        \section{Proofs} \label{sec:proofs}
%%%%%%%%%%%%%%%%%%%%%%%%%%%%%%%%%%%%%%%%%%%%%%%%%%%%%%%%%%%

This section gathers the proofs omitted in the main document,
and a few technical additional results.
The numbers in parenthesis point to the references
in the main document.

    \subsection{Proofs of \cref{sec:regular:graphs}}

\begin{proposition*}[\ref{th:sgkn}]
The number $\sg^{(k)}_n$
of $k$-regular graphs with $n$ vertices
is equal to
\[
    \sg^{(k)}_n =
    (-1)^{n k / 2}
    e^{\sum_{j=1}^k x_j^2/(2j)}
    \exphad_{\vx = \vone}
    \left(
    [y^k]
    \frac{e^{- i \sum_{j=1}^k x_j y^j}}
        {\sqrt{1 - y^2}}
    \right)^n.
\]
\end{proposition*}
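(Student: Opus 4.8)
The plan is to give a self-contained derivation, independent of the precise wording of \cite{caizergues2023exact}, starting from the elementary identity
\[
    \sg^{(k)}_n = [x_1^k \cdots x_n^k] \prod_{1 \leq i < j \leq n} (1 + x_i x_j),
\]
which holds because expanding the product selects an edge set and the extraction $[x_i^k]$ forces $\deg(i) = k$ (this is the coefficient extraction recalled in \cref{sec:introduction} in connection with \cite{mckay1991asymptotic}). The goal is to transform this $n$-variable extraction into an expression in the $k$ variables $x_1, \dots, x_k$, by decoupling the vertices inside a Gaussian-type transform, recognizing the single-vertex factor, and rewriting the transform as an exponential Hadamard product.

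First I would factor the product over pairs. Taking logarithms and using $\log(1+u) = \sum_{m \geq 1} \frac{(-1)^{m-1}}{m} u^m$ together with $\sum_{i<j} x_i^m x_j^m = \tfrac12\big( (\sum_i x_i^m)^2 - \sum_i x_i^{2m} \big)$ yields the formal power series identity
\[
    \prod_{1 \leq i < j \leq n} (1 + x_i x_j) = \prod_{i=1}^n (1 + x_i^2)^{-1/2} \prod_{m \geq 1} \exp\Big( \tfrac{(-1)^{m-1}}{2m} \big(\textstyle\sum_{i} x_i^m\big)^{2} \Big),
\]
and since $[x_1^k \cdots x_n^k]$ ignores the factors with $m > k$, the second product may be restricted to $1 \leq m \leq k$. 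Next I would linearize each square: by \cref{th:gaussian_transform} (with the kernel $e^{x^2/2}$ replaced by $e^{x^2/(2m)}$, and noting the statement extends verbatim to a formal power series in $x$), applied to $P(x) = e^{a\, x S}$, one has, for any scalar $a$ and any series $S$ with $S(0)=0$, the formal identity $e^{a^2 S^2/(2m)} = e^{x^2/(2m)} \odot_{x=1} e^{a\, x S}$. Taking $a = \varepsilon_m := \sqrt{(-1)^{m-1}}$ (so $\varepsilon_m = 1$ for $m$ odd and $\varepsilon_m = i$ for $m$ even) and $S = S_m := \sum_i x_i^m$ gives $a^2/m = (-1)^{m-1}/m$, hence, with the multivariate $\odot$,
\[
    \prod_{m=1}^k \exp\Big( \tfrac{(-1)^{m-1}}{2m} S_m^2 \Big) = e^{\sum_{m=1}^k w_m^2/(2m)} \odot_{\vw = \vone} \exp\Big( \textstyle\sum_{m=1}^k \varepsilon_m\, w_m S_m \Big).
\]

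Then I would interchange the extraction $[x_1^k \cdots x_n^k]$ (in the vertex variables) with the Hadamard product $\odot_{\vw}$ (in the auxiliary variables $w_1,\dots,w_k$), which is legitimate because they act on disjoint sets of variables, and use that both $\prod_i (1+x_i^2)^{-1/2}$ and $\exp(\sum_m \varepsilon_m w_m S_m) = \prod_i \exp(\sum_m \varepsilon_m w_m x_i^m)$ factor over the vertices; all $n$ vertices then contribute the same factor, so
\[
    \sg^{(k)}_n = e^{\sum_{m=1}^k w_m^2/(2m)} \odot_{\vw=\vone} \left( [y^k] \frac{\exp(\sum_{m=1}^k \varepsilon_m\, w_m y^m)}{\sqrt{1 + y^2}} \right)^{n}.
\]
To finish, I would substitute $y \mapsto iy$ inside the bracket (turning $(1+y^2)^{-1/2}$ into $(1-y^2)^{-1/2}$, pulling out a factor $i^{-k}$, and turning $\varepsilon_m w_m y^m$ into $\varepsilon_m i^m w_m y^m = i\,(-1)^{\lfloor m/2\rfloor}\, w_m y^m$), and then rescale each $w_m$ by the sign $(-1)^{\lfloor m/2\rfloor+1}$ — harmless, since the kernel $e^{w_m^2/(2m)}$ is even in $w_m$, so only even powers survive the Hadamard product — which makes every exponent $-i\, w_m y^m$. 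The result, after renaming $w_m$ to $x_j$, is
\[
    \sg^{(k)}_n = i^{-nk}\, e^{\sum_{j=1}^k x_j^2/(2j)} \odot_{\vx = \vone} \left( [y^k] \frac{e^{-i \sum_{j=1}^k x_j y^j}}{\sqrt{1-y^2}} \right)^{n},
\]
and $i^{-nk} = (-1)^{nk/2}$ since $nk$ is even, which is the claim.

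I expect the main obstacle to be careful bookkeeping rather than any conceptual difficulty. One must justify that all the identities above are genuine equalities in the ring of formal power series — in particular that the infinite product in the factorization converges formally (each monomial receiving contributions from only finitely many $m$) and that discarding the factors with $m > k$ is legitimate for the coefficient extracted — keep scrupulous track of the sign rescaling of the $w_m$ together with the $y \mapsto iy$ substitution so that the constants collect exactly into $i^{-nk}$, and check that each exponential Hadamard product is well defined (the kernels have the form $e^{\sum_j F_j(z_j)}$ with $F_j(0) = 0$, and $\exp(\sum_m \varepsilon_m w_m S_m)$ is a legitimate formal power series because each $S_m$ has zero constant term). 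If one is willing instead to use \cite{caizergues2023exact} as a black box, a shorter route is to take its exact formula as the starting point and verify that the Hadamard-product manipulations above reproduce the stated form, the only non-routine point then being to match the two normalizations.
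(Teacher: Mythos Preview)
Your proposal is correct, and the final manipulations (the substitution $y \mapsto iy$ followed by a sign flip in each Hadamard variable) are exactly the paper's proof. What differs is the starting point: the paper simply quotes the formula
\[
    \sg^{(k)}_n =
    e^{\sum_{j=1}^k (-1)^{j+1} x_j^2 / (2 j)}
    \odot_{\vx = \vone}
    \left(
    [y^k]
    \frac{e^{\sum_{j=1}^k x_j y^j}}
        {\sqrt{1 + y^2}}
    \right)^n
\]
from \cite{caizergues2023exact} and then applies the rescaling $x_j \mapsto i^{\,j-1} x_j$ (which turns the alternating-sign kernel into $e^{\sum_j x_j^2/(2j)}$ and turns the exponent into $\sum_j i^{\,j-1} x_j y^j$), followed by $y \mapsto i y$. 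Your Steps~1--6 are, in effect, a self-contained re-derivation of that cited formula: your intermediate expression with $\varepsilon_m w_m$ is precisely the cited one after moving each $\varepsilon_m$ from the right factor to the left via $A(\alpha x)\odot B(x)=A(x)\odot B(\alpha x)$. So your route is longer but fully self-contained, while the paper's is a three-line variable change that relies on the black box; each buys exactly what you say it does in your final paragraph.

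Two small remarks. First, your use of \cref{th:gaussian_transform} with $P(x)=e^{a x S}$ is outside the lemma's stated scope (polynomials), but the coefficient-wise computation you need,
\[
    e^{w^2/(2m)} \odot_{w=1} e^{a w S} = \sum_{n\geq 0} \frac{(2n-1)!!}{m^n}\,\frac{(aS)^{2n}}{(2n)!} = e^{a^2 S^2/(2m)},
\]
is an honest formal-power-series identity because $S$ has zero constant term, so this is only a matter of stating it directly rather than invoking the lemma. Second, your justification for the sign flip is exactly right (only even powers of each $w_m$ survive against the even kernel $e^{w_m^2/(2m)}$), but note that this is the same mechanism that the paper uses when it moves a scalar across the Hadamard product; you could equivalently phrase it via $A(\alpha x)\odot_{x=1} B(x) = A(x)\odot_{x=1} B(\alpha x)$ with $\alpha=\pm 1$.
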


\begin{proof}
We start with the formula from \cite{caizergues2023exact}
\[
    \sg^{(k)}_n =
    e^{\sum_{j=1}^k (-1)^{j+1} x_j^2 / (2 j)}
    \exphad_{\vx = \vone}
    \left(
    [y^k]
    \frac{e^{\sum_{j=1}^k x_j y^j}}
        {\sqrt{1 + y^2}}
    \right)^n.
\]
and apply first the identity
$A(\alpha x) \odot_x B(x) = A(x) \odot_x B(\alpha x)$
for each $x_j$ with $\alpha = i^{(j-1)/2}$
\[
    \sg^{(k)}_n =
    e^{\sum_{j=1}^k x_j^2/(2j)}
    \exphad_{\vx = \vone}
    \left(
    [y^k]
    \frac{e^{\sum_{j=1}^k i^{j-1} x_j y^j}}
        {\sqrt{1 + y^2}}
    \right)^n,
\]
then the identity $[y^k] A(\alpha y) = \alpha^k [y^k] A(y)$
with $\alpha = i$
\[
    \sg^{(k)}_n =
    e^{\sum_{j=1}^k x_j^2/(2j)}
    \exphad_{\vx = \vone}
    \left(
    i^k
    [y^k]
    \frac{e^{- i \sum_{j=1}^k x_j y^j}}
        {\sqrt{1 - y^2}}
    \right)^n.
\]
\end{proof}

    \subsubsection{Proofs of \cref{sec:laplace}}

We start by recalling a classical lemma to estimate Gaussian-like integrals.
It will be useful for our proof of the Laplace method.

\begin{lemma}
\label{th:double_factorial_integral}
Consider a nonnegative integer $j$,
nonzero complex numbers $\alpha$ and $\beta$
whose arguments are in $(-\pi/4, \pi/4)$, then
\[
    \lim_{n \to +\infty}
    \int_{- \alpha n}^{\beta n}
    z^j e^{- z^2/2} d z
    =
    \begin{cases}
        0 & \text{if $j$ is odd},\\
        \sqrt{2 \pi} (2 \ell - 1)!! & \text{if $j = 2 \ell$.}
    \end{cases}
\]
and the convergence is exponentially fast in $n$.
\end{lemma}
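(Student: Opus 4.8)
The plan is to exploit that the integrand $f(z) := z^{j} e^{-z^{2}/2}$ is entire, so that the integral over the straight segment from $-\alpha n$ to $\beta n$ may be freely deformed; in particular it equals the integral over the polygonal path $-\alpha n \to 0 \to \beta n$, hence
$\int_{-\alpha n}^{\beta n} f(z)\,dz = \int_{-\alpha n}^{0} f(z)\,dz + \int_{0}^{\beta n} f(z)\,dz$.
The substitution $z \mapsto -z$ turns the first term into $(-1)^{j}\int_{0}^{\alpha n} f(z)\,dz$, with $\alpha$ now playing the role of $\beta$, so it suffices to understand $\int_{0}^{\beta n} f(z)\,dz$ for an arbitrary $\beta$ with $\arg\beta \in (-\pi/4,\pi/4)$ and then specialise.

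The whole estimate rests on the identity $|e^{-z^{2}/2}| = e^{-|z|^{2}\cos(2\arg z)/2}$. Since $|\arg\beta| < \pi/4$ we have $\cos(2\arg\beta) > 0$, and by continuity there are $\theta_{0} \in (|\arg\beta|,\pi/4)$ and $c > 0$ with $\cos(2\theta) \ge c$ for all $|\theta| \le \theta_{0}$; on the corresponding closed sector $|f(z)| \le |z|^{j} e^{-c|z|^{2}/2}$. Applying Cauchy's theorem to the circular sector $\{re^{i\theta} : 0 \le r \le R,\ \theta \text{ between } 0 \text{ and } \arg\beta\}$, the boundary integral of $f$ vanishes; the arc of radius $R$ contributes at most $|\arg\beta|\,R^{j+1} e^{-cR^{2}/2}$, which tends to $0$ as $R \to \infty$, so $\int_{0}^{Re^{i\arg\beta}} f(z)\,dz = \int_{0}^{R} x^{j} e^{-x^{2}/2}\,dx + \smallo(1)$, and letting $R \to \infty$ shows that the integral of $f$ along the ray of direction $\beta$ converges absolutely (by the sector bound) to $\int_{0}^{+\infty} x^{j} e^{-x^{2}/2}\,dx$. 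Writing $\int_{0}^{\beta n} f$ as this ray integral minus the tail beyond $\beta n$, and bounding the tail by $\int_{n}^{+\infty} (|\beta| s)^{j} |\beta|\, e^{-c|\beta|^{2} s^{2}/2}\,ds = \bigO(n^{j-1} e^{-c|\beta|^{2} n^{2}/2})$, gives $\int_{0}^{\beta n} f(z)\,dz = \int_{0}^{+\infty} x^{j} e^{-x^{2}/2}\,dx + \bigO(e^{-\delta n})$ for a suitable $\delta > 0$. The same argument with $\alpha$ in place of $\beta$ yields $\int_{-\alpha n}^{0} f(z)\,dz = (-1)^{j}\int_{0}^{+\infty} x^{j} e^{-x^{2}/2}\,dx + \bigO(e^{-\delta n})$.

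Adding the two contributions, $\int_{-\alpha n}^{\beta n} z^{j} e^{-z^{2}/2}\,dz = (1 + (-1)^{j})\int_{0}^{+\infty} x^{j} e^{-x^{2}/2}\,dx + \bigO(e^{-\delta n})$, which is $\bigO(e^{-\delta n})$ when $j$ is odd, and for $j = 2\ell$ equals $\int_{\reals} x^{2\ell} e^{-x^{2}/2}\,dx + \bigO(e^{-\delta n}) = \sqrt{2\pi}\,(2\ell-1)!! + \bigO(e^{-\delta n})$ by the integral representation of the double factorial recalled in \cref{sec:def}. This proves both the stated limit and that the convergence is exponentially fast. I do not expect a genuine obstacle here: the only points needing care are the rigorous justification of the contour deformations (immediate, since $f$ is entire and the relevant arcs and tails are annihilated by the Gaussian factor precisely because $\arg\alpha,\arg\beta \in (-\pi/4,\pi/4)$) and the bookkeeping of the sign $(-1)^{j}$ coming from the reflection $z \mapsto -z$.
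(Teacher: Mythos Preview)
Your proof is correct and relies on the same core mechanism as the paper: the integrand is entire, and the Gaussian factor satisfies $|e^{-z^{2}/2}| = e^{-|z|^{2}\cos(2\arg z)/2}$, which decays in any sector with $|\arg z| < \pi/4$. The organization of the contour deformation differs slightly. The paper moves each endpoint $\beta n$ (resp.\ $-\alpha n$) vertically to its real part $bn$ (resp.\ $-an$), bounds those short segments, then adds the real tails $[bn,+\infty)$ and $(-\infty,-an]$ to reach $\int_{\reals}$; the parity distinction comes at the very end by symmetry of the real integral. You instead pass through the origin, use the reflection $z \mapsto -z$ to reduce to two half-ray integrals $\int_{0}^{\beta n}$ and $\int_{0}^{\alpha n}$, and rotate each to the positive real axis via a sector argument; the parity factor $(1+(-1)^{j})$ then appears immediately when recombining. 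Both routes are standard and of comparable length; your sector rotation is arguably tidier in that it treats $\alpha$ and $\beta$ symmetrically and isolates the parity early, while the paper's vertical-shift argument avoids parametrizing the circular arc.
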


\begin{proof}
Define the positive real numbers
$a = \Real(\alpha)$ and $b = \Real(\beta)$.
We first prove that the integral from $b n$ to $\beta n$ is negligible.
Indeed, bounding the absolute value of the integral
by the length of the integration path
multiplied by the maximal value of the absolute of the integrand,
we obtain
\[
    \left|
    \int_{b n}^{\beta n}
    z^j e^{- z^2 / 2} d z
    \right|
    \leq
    \Im(\beta n)
    |n \beta|^j
    e^{- \Real((\beta n)^2 / 2)}
    \leq
    n^{j +  1}
    |\beta|^{j+1}
    e^{- n^2 \cos(2 \arg(\beta)) |\beta|^2 / 2}.
\]
Since $2 \arg(\beta)$ is in $(-\pi/2, \pi/2)$,
the cosine is positive
so this upper bound converges exponentially fast to $0$.
By symmetry, we also have
\[
    \left|
    \int_{- \alpha n}^{- a n}
    z^j e^{- z^2 / 2} d z
    \right|
\]
converging exponentially fast to $0$.

Now consider the integral on $[b n, +\infty)$.
For any $x \geq 1$, we have
\[
    x^j
    \leq
    j! 4^j e^{x/4}
    \quad \text{and} \quad
    e^{- x^2 / 2} \leq e^{- x / 2}
\]
so for any $n \geq 1/b$
\[
    \int_{b n}^{+\infty}
    x^j e^{- x^2/2} d x
    \leq
    j! 4^j
    \int_{b n}^{+\infty}
    e^{- x/4} d x
    \leq
    j! 4^{j+1}
    e^{- b n / 4}
\]
which converges exponentially fast to $0$.
Similarly, so does the integral
\[
    \int_{-\infty}^{- a n}
    x^j e^{- x^2/2} d x.
\]
By analyticity, the integration path from $- \alpha n$ to $\beta n$
can be any continuous piece-wise derivable path
linking $- \alpha n$ to $\beta n$.
Adding and subtracting parts of the integration paths,
we deduce that
\[
    \left|
    \int_{- \alpha n}^{\beta n}
    z^j e^{- z^2 / 2} d z
    - \int_{-\infty}^{+\infty}
    x^j e^{- x ^2 / 2} d x
    \right|
\]
is exponentially small.

By symmetry, for any odd $j$, we have
\[
    \int_{-\infty}^{+\infty}
    x^j e^{-x^2/2} d x
    = 0.
\]
For any even positive $j$,
integration by parts provides
\[
    \int_{-\infty}^{+\infty}
    x^j e^{-x^2/2} d x
    =
    (j-1)
    \int_{-\infty}^{+\infty}
    x^{j-2} e^{-x^2/2} d x.
\]
Following an induction
initialized with the Gaussian integral,
we deduce
\[
    \int_{-\infty}^{+\infty}
    x^j e^{-x^2/2} d x
    =
    \sqrt{2 \pi} (j - 1)!!,
\]
which concludes the proof.
\end{proof}

We recall the proof of the Laplace method for completeness
and to have the exact expression we will need,
as many variants exist in the literature.

\begin{proposition*}[\ref{th:laplace}]
Consider a compact interval $I$ neighborhood of $0$
and functions $A(x)$ and $\phi(x)$ analytic on $I$
such that $A(x)$ is not the zero function,
$\phi'(0) = 0$,
$\phi''(0) \neq 0$
and $\Real(\phi(x))$ reaches its minimum on $I$
only at $0$.
Then there exists a formal power series $F(z)$
such that the following asymptotic expansion holds
\[
    \int_I A(t) e^{- n \phi(t)} d t
    \approx
    e^{-n \phi(0)}
    \sqrt{\frac{2 \pi}{\phi''(0) n}}
    F(n^{-1}).
\]
Define the series $\psi(t)$ and $T(x)$ as
\[
    \psi(t) = \left( \frac{\phi(t) - \phi(0)}{\phi''(0) t^2/2} \right)^{-1/2},
    \qquad
    T(x) = x \psi(T(x)).
\]
then
\[
    F(z) =
    e^{z x^2/ (2 \phi''(0))}
    \odot_{x = 1}
    A(T(x)) T'(x)
\]
or, if one wants more explicit expressions for the coefficients,
\[
    [z^k] F(z) =
    \frac{(2k-1)!!}{\phi''(0)^k}
    [t^{2k}] A(t) \psi(t)^{2k+1}
\]
for all $k \geq 0$.
\end{proposition*}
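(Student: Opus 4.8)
The plan is to localize the integral, perform a change of variables to reduce to a pure Gaussian, and then identify the resulting formal power series of coefficients. First I would use the hypothesis that $\Real(\phi)$ attains its minimum on $I$ only at $0$: for any sufficiently small subinterval $J \subset I$ containing $0$, the integral over $I \setminus J$ is $\bigO(e^{-n\phi(0)} e^{-\delta n})$ for some $\delta > 0$, which is exponentially small and hence does not affect the asymptotic expansion (using the stability property of asymptotic expansions under exponentially small perturbations, stated in \cref{sec:def}). So it suffices to work on a small interval around $0$. On such an interval, the analyticity of $\phi$ together with $\phi'(0) = 0$, $\phi''(0) \neq 0$ lets me write $\phi(t) - \phi(0) = \tfrac12 \phi''(0) t^2 \psi(t)^{-2}$ with $\psi(t) = \bigl(\tfrac{\phi(t)-\phi(0)}{\phi''(0)t^2/2}\bigr)^{-1/2}$ analytic near $0$ and $\psi(0) = 1$; note $\psi$ inherits rationality of coefficients from $\phi$ because of the normalization by $\phi''(0)$.

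Next I would introduce the new variable via $s = t/\psi(t)$, i.e. $t = T(s)$ where $T$ is the analytic solution of $T(s) = s\psi(T(s))$ (well-defined near $0$ since $\psi(0)=1$, as in rooted-tree functional equations). Under this substitution $\phi(t) - \phi(0) = \tfrac12 \phi''(0) s^2$, and the integral becomes
\[
    e^{-n\phi(0)} \int_{J'} A(T(s)) T'(s)\, e^{- n \phi''(0) s^2 / 2}\, d s
\]
over a small interval $J'$ around $0$. Setting $s = x / \sqrt{\phi''(0) n}$ rescales this to $e^{-n\phi(0)} (\phi''(0) n)^{-1/2} \int A(T(\cdot))T'(\cdot) e^{-x^2/2}\,dx$ over an interval of length $\exactbigO(\sqrt n)$. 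Now I expand the analytic amplitude $A(T(s))T'(s) = \sum_{m \geq 0} a_m s^m$ as a convergent power series on $J'$; substituting $s = x/\sqrt{\phi''(0)n}$ and integrating term by term against $e^{-x^2/2}$, I apply \cref{th:double_factorial_integral}: the odd-$m$ terms vanish (up to exponentially small error) and the term $m = 2k$ contributes $\sqrt{2\pi}(2k-1)!! \, a_{2k}\, (\phi''(0)n)^{-k}$. Collecting powers of $n^{-1}$ gives the claimed expansion with $[z^k]F(z) = \tfrac{(2k-1)!!}{\phi''(0)^k} a_{2k} = \tfrac{(2k-1)!!}{\phi''(0)^k}[t^{2k}] A(t)\psi(t)^{2k+1}$, where the last equality uses Lagrange inversion: $[s^{2k}] A(T(s))T'(s) = [s^{2k}]\,\tfrac{d}{ds}\!\int_0^{T(s)} A$ rewritten via $T'(s)$, or directly $[s^m]G(T(s)) = \tfrac1m[t^{m-1}]G'(t)\psi(t)^m$ applied with the antiderivative of $A$. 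The generating-function form $F(z) = e^{zx^2/(2\phi''(0))} \odot_{x=1} A(T(x))T'(x)$ then follows from \cref{th:gaussian_transform}, which identifies $(2k-1)!![t^{2k}](\cdot)$ with the exponential Hadamard product against $e^{x^2/2}$, after rescaling $x \mapsto x/\sqrt{\phi''(0)}$ to absorb the $\phi''(0)^{-k}$.

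The main obstacle is making the term-by-term integration rigorous: one must justify swapping the (finite truncation of the) power series expansion of $A(T(s))T'(s)$ with the integral and control the remainder uniformly in $n$. Concretely, for a fixed order $r$ I would Taylor-expand $A(T(s))T'(s)$ to order $2r$ with an explicit analytic remainder bounded by $C|s|^{2r+1}$ on $J'$, plug in $s = x/\sqrt{\phi''(0)n}$, and bound the remainder integral $\int |x|^{2r+1} e^{-x^2/2}\,dx = \bigO(1)$ times $n^{-(r+1/2)}$ — which is exactly the error order required for the $r$-term asymptotic expansion. The remaining ingredient is checking that the integration contour can be taken as claimed despite $\phi''(0)$ possibly being negative or complex; here the argument condition in \cref{th:double_factorial_integral} is what is used, and the hypothesis that $\Real(\phi)$ has a strict minimum at $0$ guarantees the rescaled endpoints have arguments in $(-\pi/4,\pi/4)$ so that \cref{th:double_factorial_integral} applies. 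Everything else is routine bookkeeping of constants.
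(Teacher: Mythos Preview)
Your proposal is correct and follows essentially the same approach as the paper: localize near $0$, apply the change of variable $t=T(x)$ to reduce to a Gaussian, rescale by $\sqrt{\phi''(0)n}$, Taylor-expand the amplitude to a fixed order with a polynomial remainder, invoke \cref{th:double_factorial_integral} for the moment integrals, and finish with Lagrange inversion to rewrite $[s^{2k}]A(T(s))T'(s)$ as $[t^{2k}]A(t)\psi(t)^{2k+1}$. The only cosmetic difference is that the paper carries out the Lagrange step via a Cauchy integral representation rather than via the antiderivative of $A$, and truncates at order $2r-1$ with an $\bigO(x^{2r})$ remainder instead of your $\bigO(|s|^{2r+1})$; both choices give the required error.
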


\begin{proof}
We closely the proof by \cite{PW13},
adding only a more explicit expression
for the generating function of the coefficients
of the asymptotic expansion.
We start by putting the constant term of $\phi(t)$
out of the integral
\[
    \int_I A(t) e^{- n \phi(t)} d t
    =
    e^{-n \phi(0)}
    \int_I A(t) e^{- n (\phi(t) - \phi(0))} d t.
\]
For any small enough $\epsilon > 0$, we have
\[
    \int_{I \setminus [-\epsilon, \epsilon]}
    A(t) e^{- n (\phi(t) - \phi(0))} d t
    \leq
    e^{- n (\min(\phi(-\epsilon), \phi(\epsilon)) - \phi(0))}|I| \sup_{t \in I} |A(t)|.
\]
Since $\phi(t) - \phi(0)$ reaches its minimum
on the compact set $I$ only at $0$,
the value $\min(\phi(-\epsilon), \phi(\epsilon)) - \phi(0)$
is strictly positive, so the left hand-side
converges to $0$ exponentially fast.
Thus, for any small enough $\epsilon > 0$,
there exists a positive $\delta$ such that
\[
    \int_I
    A(t) e^{- n \phi(t)} d t
    =
    e^{- n \phi(0)}
    \left(
    \int_{-\epsilon}^{\epsilon}
    A(t) e^{- n (\phi(t) - \phi(0))} d t
    + \bigO(e^{- \delta n})
    \right).
\]
Our strategy to estimate the integral
is to apply a change of variable to turn it into a Gaussian integral.
Thus, we are seeking a change of variable satisfying
\[
    \phi(t) - \phi(0) = \phi''(0) \frac{x^2}{2}.
\]
It is sufficient to have
\[
    x = t \sqrt{ \frac{\phi(t) - \phi(0)}{\phi''(0) t^2 / 2} }.
\]
Since by Taylor's Theorem
\[
    \phi(t) = \phi(0) + \phi''(0) \frac{t^2}{2} + \bigO(t^3),
\]
the function $\frac{\phi(t) - \phi(0)}{\phi''(0) t^2 / 2}$
is a power series analytic at $0$ with constant term $1$.
Taking the principal value of the square root
and defining
\[
    \psi(t) =
    \left( \frac{\phi(t) - \phi(0)}{\phi''(0) t^2 / 2} \right)^{-1/2},
\]
which is analytic at $0$,
it is sufficient for our change of variable to find $T(x)$
invertible in a neighborhood of $0$ such that
\[
    T(x) = x \psi(T(x)).
\]
By the analytic inversion theorem,
this equation indeed characterizes the function $T(x)$
which is analytic and invertible in a neighborhood of $0$,
with $T(0) = 0$.
We choose $\epsilon$ small enough
to ensure that the compositional inverse of $T(x)$
is analytic and invertible on $[-\epsilon, \epsilon]$
and obtain
\[
    \int_{-\epsilon}^{\epsilon}
    A(t) e^{- n (\phi(t) - \phi(0))} d t
    =
    \int_{T^{-1}(-\epsilon)}^{T^{-1}(\epsilon)}
    A(T(x)) T'(x) e^{- n \phi''(0) x^2/2} d x
\]
where the integration path links the complex points
$T^{-1}(-\epsilon)$ and $T^{-1}(\epsilon)$
(analycity ensures that all continuous piece-wise derivable paths
staying in the domain of analycity
give the same value for the integral).

Observe that if the integration path linked $- \infty$ to $+ \infty)$
instead of $T^{-1}(-\epsilon)$ to $T^{-1}(\epsilon)$,
and $A(T(x)) T'(x)$ was a polynomial,
applying \cref{th:had_to_int} would yield
\[
    \int_{T^{-1}(-\epsilon)}^{T^{-1}(\epsilon)}
    A(T(x)) T'(x) e^{- n \phi''(0) x^2/2} d x
    =
    \sqrt{\frac{2 \pi}{\phi''(0) n}}
    e^{x^2 / (2 \phi''(0) n)}
    \odot_{x=1}
    A(T(x)) T'(x)
\]
and we would conclude
\[
    \int_I A(t) e^{- n \phi(t)} dt
    \overset{?}{=}
    e^{- n \phi(0)}
    \sqrt{\frac{2 \pi}{\phi''(0) n}}
    e^{x^2 / (2 \phi''(0) n)}
    \odot_{x=1}
    A(T(x)) T'(x).
\]
This equality is formally wrong.
However, it is the same result as the current proposition
when the equality is replaced by an asymptotic expansion.
The rest of the proof provides rigor
to this algebraic intuition.

The variable change $n \phi''(0) x \mapsto x$ is applied.
\[
    \int_{\sqrt{\phi''(0) n} T^{-1}(-\epsilon)}^{\sqrt{\phi''(0) n} T^{-1}(\epsilon)}
    A \left( T \left( \frac{x}{\sqrt{\phi''(0) n}} \right) \right)
    T'\left( \frac{x}{\sqrt{\phi''(0) n}} \right)
    e^{- x^2/2}
    \frac{d x}{\sqrt{\phi''(0) n}}.
\]
Fix a positive integer $r$.
The series $A(T(x)) T'(x)$ is replaced
by its Taylor expansion of order $2 r$
\[
    \int_{\sqrt{\phi''(0) n} T^{-1}(-\epsilon)}^{\sqrt{\phi''(0) n} T^{-1}(\epsilon)}
    \left(
        \sum_{j = 0}^{2 r - 1}
        [z^j] A(T(z)) T'(z)
        \left( \frac{x}{\sqrt{\phi''(0) n}} \right)^j
        + \bigO(x^{2r} n^{-r})
    \right)
    e^{- x^2/2}
    \frac{d x}{\sqrt{\phi''(0) n}}
\]
which is rewritten
\begin{align*}
    \frac{1}{\sqrt{\phi''(0) n}}
    \Bigg(
    &
    \sum_{j = 0}^{2 r - 1}
    [z^j] A(T(z)) T'(z)
    (\phi''(0) n)^{-j/2}
    \int_{\sqrt{\phi''(0) n} T^{-1}(-\epsilon)}^{\sqrt{\phi''(0) n} T^{-1}(\epsilon)}
    x^{2j} e^{-x^2/2} dx
    \\ & +
    n^{-r}
    \int_{\sqrt{\phi''(0) n} T^{-1}(-\epsilon)}^{\sqrt{\phi''(0) n} T^{-1}(\epsilon)}
    \bigO(x^{2r})
    e^{-x^2/2} dx
    \Bigg)
\end{align*}
Since $\Real(\phi(t))$ is minimal at $0$,
we have $\Real(\phi''(0)) > 0$
so $\arg(\sqrt{\phi''(0)}) \in (- \pi/4, \pi/4)$.
From $T(x) = x \psi(T(x))$, we deduce $T(x) = x + \bigO(x^2)$
so $T^{-1}(x) = x + \bigO(x^2)$
(this also follows from $T^{-1}(x) = \frac{x}{\psi(x)}$).
We can choose $\epsilon$ small enough
to ensure that both $T^{-1}(\epsilon)$ and $-T^{-1}(-\epsilon)$
have arguments close to $0$,
so both
$\arg(\sqrt{\phi''(0)} T^{-1}(\epsilon))$
and $\arg(- \sqrt{\phi''(0)} T^{-1}(- \epsilon))$
are in $(- \pi / 4, \pi / 4)$.
According to \cref{th:double_factorial_integral},
this implies for the second term the bound
\[
    \left|
    \int_{\sqrt{\phi''(0) n} T^{-1}(-\epsilon)}^{\sqrt{\phi''(0) n} T^{-1}(\epsilon)}
    n^{-r}
    \bigO(x^{2r}) e^{- x^2/2} \frac{d x}{\sqrt{n}}
    \right|
    =
    \bigO(n^{-r-1/2})
    \int_{-\infty}^{+\infty}
    x^{2r} e^{-x^2/2} d x
    =
    \bigO(n^{-r-1/2}).
\]
For each $j \in [1, 2 r - 1]$, according to the same lemma,
the integral
\[
    \int_{\sqrt{\phi''(0) n} T^{-1}(-\epsilon)}^{\sqrt{\phi''(0) n} T^{-1}(\epsilon)}
    x^{2j} e^{-x^2/2} dx
\]
converges super-polynomially fast
to $0$ if $j$ is odd,
and to $\sqrt{2 \pi} (2\ell - 1)!!$ if $j = 2\ell$.
Thus,
\[
    \sum_{j=0}^{2r-1}
    \int_{\sqrt{n} T^{-1}(-\epsilon)}^{\sqrt{n} T^{-1}(\epsilon)}
    [z^j] A(T(z)) T'(z)
    (\phi''(0) n)^{-j/2}
    x^j e^{- x^2/2} d x
\]
converges super-polynomially fast to
\[
    \sqrt{2 \pi}
    \sum_{\ell=0}^{r-1}
    (2\ell-1)!!
    [z^{2 \ell}] A(T(z)) T'(z)
    (\phi''(0) n)^{-\ell}.
\]
Gathering the last results, for any positive $r$, we have
\[
    \int_I A(t) e^{- n \phi(t)} d t
    \approx
    e^{-n \phi(0)}
    \sqrt{\frac{2 \pi}{\phi''(0) n}}
    \bigg(
    \sum_{\ell=0}^{r-1}
    (2\ell-1)!!
    [z^{2 \ell}] A(T(z)) T'(z)
    (\phi''(0) n)^{-\ell}
    + \bigO(n^{-r})
    \bigg).
\]
Defining
\[
    F(z)
    =
    e^{z x^2 / (2 \phi''(0))}
    \odot_{x = 1}
    A(T(x)) T'(x)
    =
    \sum_{\ell \geq 0}
    (2\ell-1)!!
    [z^{2 \ell}] A(T(z)) T'(z)
    \left( \frac{z}{\phi''(0)} \right)^\ell
\]
we deduce the asymptotic expansion
\[
    \int_I A(t) e^{- n \phi(t)} d t
    \approx
    e^{-n \phi(0)}
    \sqrt{\frac{2 \pi}{\phi''(0) n}}
    F(n^{-1}).
\]

To conclude the proof, we rewrite
$[z^{2 \ell}] A(T(z)) T'(z)$
using Lagrange inversion.
In the Cauchy integral representation
of the coefficient extraction
\[
    [z^{2 \ell}] A(T(z)) T'(z)
    =
    \frac{1}{2 i \pi} \oint
    A(T(z)) T'(z)
    \frac{d z}{z^{2\ell+1}}
\]
the variable change $T(z) = t$ is applied
\[
    [z^{2 \ell}] A(T(z)) T'(z)
    =
    \frac{1}{2 i \pi} \oint
    A(t)
    \left( \frac{\psi(t)}{t} \right)^{2\ell+1}
    d t
\]
and we recognize the Cauchy integral representation
of the coefficient extraction
\[
    [t^{2 \ell}]
    A(t)
    \psi(t)^{2\ell+1}.
\]
\end{proof}

\begin{proposition*}[\ref{th:variant_laplace}]
In \cref{th:laplace},
let us remove the assumption that the interval $I$ is compact,
so it is either of the form
$[- a, +\infty)$,
$(-\infty, a)$
or $(-\infty, +\infty)$.
Let us also assume
that $|A(t) e^{- K \phi(t)}|$
and $|e^{- K \phi(t)}|$
are integrable on $I$ for some positive $K$.
Then the asymptotic expansion
from the conclusion of \cref{th:laplace} holds.
\end{proposition*}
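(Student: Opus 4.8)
The plan is to deduce the result from the compact case \cref{th:laplace} by splitting off a bounded neighbourhood of $0$. Fix $\epsilon>0$ small enough that $[-\epsilon,\epsilon]\subset I$, and set $J:=I\setminus[-\epsilon,\epsilon]$. On the compact interval $[-\epsilon,\epsilon]$ all the hypotheses of \cref{th:laplace} hold — analyticity of $A$ and $\phi$, $A\not\equiv 0$, $\phi'(0)=0$, $\phi''(0)\neq 0$, and $\Real(\phi)$ minimised on $[-\epsilon,\epsilon]$ only at $0$ (inherited from the corresponding property on $I$) — so \cref{th:laplace} gives
\[
    \int_{-\epsilon}^{\epsilon} A(t) e^{-n\phi(t)}\,dt
    \approx
    e^{-n\phi(0)}\sqrt{\frac{2\pi}{\phi''(0)\,n}}\;F(n^{-1}),
\]
with $F$ the formal power series provided by \cref{th:laplace}. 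It therefore suffices to show that $\int_J A(t)e^{-n\phi(t)}\,dt$ is exponentially small compared with the main term, i.e.\ $\bigO\big(e^{-n\phi(0)}e^{-\delta n}\big)$ for some $\delta>0$; since adding an exponentially small quantity does not change an asymptotic expansion (as recalled in \cref{sec:def}), summing the two pieces then yields the claimed expansion with the same $F$.

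For the tail I would split the exponent as $n\phi=K\phi+(n-K)\phi$ and use the integrability hypotheses:
\[
    \Big|\int_J A(t)e^{-n\phi(t)}\,dt\Big|
    \;\le\;
    \int_J \big|A(t)e^{-K\phi(t)}\big|\; e^{-(n-K)\Real(\phi(t))}\,dt .
\]
The crucial ingredient is the existence of $\delta_0>0$ with $\Real(\phi(t))\ge \Real(\phi(0))+\delta_0$ for every $t\in J$. On the bounded part of $J$ this is immediate from compactness together with the fact that $\Real(\phi)$ is minimised on $I$ only at $0$; on the unbounded part it follows from the integrability of $|e^{-K\phi}|$, which forces $\Real(\phi(t))$ to stay bounded away from its infimum $\Real(\phi(0))$ as $|t|\to\infty$ (and indeed in the applications of interest, such as $\phi(t)=1+t-\log(1+t)$ or the phase of \cref{th:sgkn:integral}, one simply has $\Real(\phi(t))\to+\infty$, making this step trivial). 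Granting this,
\[
    \Big|\int_J A(t)e^{-n\phi(t)}\,dt\Big|
    \;\le\;
    e^{-(n-K)(\Real(\phi(0))+\delta_0)}\,\big\|A e^{-K\phi}\big\|_{L^1(J)}
    \;=\;
    \bigO\big(e^{-n\Real(\phi(0))}\,e^{-\delta n}\big)
\]
for any $\delta<\delta_0$ and $n$ large, which is the required bound because $|e^{-n\phi(0)}|=e^{-n\Real(\phi(0))}$.

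The main obstacle is exactly this uniform lower bound $\Real(\phi)\ge\Real(\phi(0))+\delta_0$ on the unbounded part of $J$: one must rule out $\Real(\phi)$ dipping back towards $\Real(\phi(0))$ near infinity, which is where the integrability — and the analyticity of $\phi$ on $I$, restricting how such dips can look — is used; alternatively, one may split $J$ at a large value $M$, handle $\epsilon\le|t|\le M$ by compactness and $|t|\ge M$ by the growth of $\Real(\phi)$ there. Everything else is routine: the compact piece is handled by \cref{th:laplace} unchanged, and the two contributions are added, the tail being absorbed into the error terms of the expansion.
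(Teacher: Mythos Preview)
Your proposal is correct and follows essentially the same route as the paper: split off a compact neighbourhood of $0$, apply \cref{th:laplace} there, and bound the tail via the decomposition $n\phi = K\phi + (n-K)\phi$ together with a uniform lower bound $\Real(\phi)\ge\Real(\phi(0))+\delta_0$ on $J$. The paper resolves the point you flag as the main obstacle by arguing that integrability of $|e^{-K\phi}|$ forces $\Real(\phi(t))\to+\infty$ as $|t|\to\infty$, which combined with the unique-minimum hypothesis yields the required infimum on the tail.
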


\begin{proof}
We consider the case $I = [-a, +\infty)$,
the other cases having similar proofs.
Since $|A(t) e^{- K \phi(t)}|$ is integrable
on $I$, it is bounded.
Since $|e^{- K \phi(t)}|$ is integrable,
its limit as $t$ tends to infinity must be $0$,
so the limit of $\phi(t)$ is $+\infty$.
By assumption, $\phi(t)$ reaches its unique minimum on $I$ at $t = 0$,
so for any $b > 0$
\[
    \inf_{t \geq b} \Real(\phi(t) - \phi(0)) > 0.
\]
Let us denote this value by $\delta$,
then
\[
    \left|
    \int_b^{+\infty}
    A(t) e^{- n (\phi(t) - \phi(0))}
    dt
    \right|
    \leq
    \int_b^{+\infty}
    |A(t) e^{- K (\phi(t) - \phi(0))}|
    e^{- (n - K) \delta}
    dt
    =
    \bigO(e^{- n \delta}).
\]
Thus, the integration on $[b,+\infty)$
is exponentially small
compared to the integration on $[-a, b]$.
This exponential difference is invisible
in the asymptotic expansion.
Applying \cref{th:laplace}, we obtain
\[
    \int_{-a}^{+\infty}
    A(t) e^{- n \phi(t)} dt
    =
    \int_{-a}^b A(t) e^{- n \phi(t)} dt
    +
    e^{- n \phi(0)}
    \bigO(e^{- n \delta})
    \approx
    e^{- n \phi(0)}
    \sqrt{\frac{2 \pi}{\phi''(0) n}}
    F(n^{-1}).
\]
\end{proof}

    \subsubsection{Proofs of \cref{sec:asympt:expansion:sgkn}}

\begin{lemma*}[\ref{th:sgkn:integral}]
Assume $k \geq 2$ and $n k$ even,
and define successively the power series
with rational coefficients
\begin{align*}
    B_0(u, y, \vt)
    &=
    \sum_{\ell = 1}^k
    [z^{\ell}]
    \frac{
        \left(
            1
            + \frac{u}{1 + t_1}
            \left(
                \frac{k-1}{2} \frac{y z}{(1 + t_1)^2}
                + \sum_{j = 2}^k t_j z^{j-1}
            \right)
        \right)^{k-\ell}
    }{\sqrt{1 - z^2}}
    \frac{k!}{(k - \ell)!}
    \left( \frac{u y}{1 + t_1} \right)^{\ell},
\\
    B_1(u, y, \vt)
    &=
    \exp \left(
        - \frac{\log(1 + B_0(u, y, \vt)) - k (k-1) \frac{u^2 t_2 y}{(1 + t_1)^2}}
        {y^2}
    + \frac{(k-1)^2}{4 (1 + t_1)^4}
    + (2 k^2 u^2 - k + 1) \frac{k-1}{4}
    \right),
\\
    B_2(y, \vt)
    &=
    B_1 \left( - \frac{1}{\sqrt{k}}, y, \vt \right)
    + B_1 \left( \frac{1}{\sqrt{k}}, y, \vt \right),
\\
    \phi(t)
    &=
    \frac{t^2}{2} + t - \log(1 + t).
\end{align*}
Then the number of $k$-regular graphs on $n$ vertices
is equal to
\begin{equation}
\label{eq:sgknint}
    \sgkn
    =
    \frac{(n k / e)^{n k / 2} \sqrt{k}}{k!^{n - 1/2}}
    e^{- (k^2 - 1) / 4}
    \left( \frac{n}{2 \pi} \right)^{k/2}
    \int_{\reals_{> - 1} \times \reals^{k-1}}
    B_2(i n^{-1/2}, \vt)
    e^{- n k \phi(t_1) - \sum_{j=2}^k n j t_j^2 / 2}
    d \vt.
\end{equation}
\end{lemma*}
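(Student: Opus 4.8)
The plan is to start from the exact formula for $\sgkn$ in \cref{th:sgkn}, namely
\[
    \sgkn = (-1)^{nk/2}\, e^{\sum_{j=1}^k x_j^2/(2j)} \exphad_{\vx=\vone}
    \left( [y^k] \frac{e^{-i\sum_{j=1}^k x_j y^j}}{\sqrt{1-y^2}} \right)^n,
\]
and convert the exponential Hadamard product into a genuine integral using \cref{th:had_to_int}. For that I first need to massage the inner coefficient extraction so that the object on which $\exphad_{\vx=\vone}$ acts, expanded as a power series in $\vx$, is multiplied against $e^{\sum_j x_j^2/(2j)}$; \cref{th:had_to_int} then rewrites this as $\frac{1}{(2\pi)^{k/2}\sqrt{1\cdot 2\cdots k}}\int_{\reals^k} (\cdots)\, e^{-\sum_j t_j^2/(2j)}\,d\vt$. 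Concretely I would set $\alpha_j = 1/j$ in the corollary, so the Gaussian weight becomes $e^{-\sum_j j t_j^2/2}$ and the normalizing constant is $(2\pi)^{-k/2} k!^{-1/2}$ (since $\sqrt{\alpha_1\cdots\alpha_k} = 1/\sqrt{k!}$). Raising the inner bracket to the $n$th power before applying the transform is the source of the factor $n$ in the exponent: after rescaling $t_j \mapsto \sqrt{n}\, t_j$ the Gaussian weight becomes $e^{-n\sum_j j t_j^2/2}$ and we pick up a Jacobian $n^{k/2}$, explaining the $(n/(2\pi))^{k/2}$ prefactor.

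Next I would isolate the $t_1$ variable for special treatment, since the final formula has $\phi(t_1) = t_1^2/2 + t_1 - \log(1+t_1)$ rather than a pure Gaussian in $t_1$. The idea is that the $j=1$ term contributes weight $e^{-n t_1^2/2}$, but the inner bracket raised to the power $n$ contributes, through the $x_1 y$ term and the $1/\sqrt{1-y^2}$ factor and the change of variables $1 + t_1$ appearing in the denominators, an extra factor that combines with $e^{-n t_1^2/2}$ to produce $e^{-nk\phi(t_1)}$ up to the explicit constants $e^{-(k^2-1)/4}$, $(nk/e)^{nk/2}$ and $k!^{-n}$; the substitution $y = i n^{-1/2}$ (so $y^2 = -n^{-1}$) is what turns the ``$[y^k]$'' extraction into an honest analytic quantity of size $O(1)$ — extracting the coefficient of $y^k$ is the same as a contour integral, and the Gaussian transform in the remaining variable $x_1$ (after renaming) produces exactly this evaluation at $y = i n^{-1/2}$. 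The functions $B_0, B_1, B_2$ are then defined precisely to package: $B_0$ collects the terms in the $[y^k]$-extraction after substituting $t_1 \mapsto 1+t_1$ and $u \mapsto \pm 1/\sqrt{k}$ (the $u = \pm 1/\sqrt{k}$ coming from the Gaussian transform in $x_1$ with variance proportional to $1/k$, which is why $B_2$ is a sum of two $B_1$ evaluations), $B_1$ exponentiates the logarithm (coming from $(1 + B_0)^n = e^{n\log(1+B_0)}$ with $n = -1/y^2$ after the $y = in^{-1/2}$ substitution) together with the constant corrections needed to match $e^{-(k^2-1)/4}$, and $B_2$ symmetrizes over the two roots.

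The bookkeeping is where the real work lies: one must carefully track how the $n$th power interacts with each of the $k$ Hadamard transforms, how the $(-1)^{nk/2}$ sign is absorbed (it cancels against $i^k$-type factors and the $(nk/e)^{nk/2}$ being defined with the right branch), and how all the scattered constants $\frac{(k-1)^2}{4(1+t_1)^4}$, $(2k^2u^2 - k+1)\frac{k-1}{4}$, and $k(k-1)\frac{u^2 t_2 y}{(1+t_1)^2}$ arise — these come from Taylor-expanding $\log(1+B_0)/y^2$ around $y = 0$ and separating the parts that survive as $n \to \infty$ (the divergent $1/y^2$ and $1/y$ pieces, which must cancel against the prefactor, versus the convergent $O(1)$ piece). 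The main obstacle is precisely this: showing that the apparently singular $-\log(1+B_0(u,y,\vt))/y^2$ term, once expanded in $y$, has its $y^{-2}$ and $y^{-1}$ coefficients exactly cancelled by the extracted prefactors $(nk/e)^{nk/2} k!^{-n} e^{-(k^2-1)/4}$ and by the correction $k(k-1)u^2 t_2 y/(1+t_1)^2$, so that $B_1$ is a well-defined power series in $y$ (equivalently in $n^{-1/2}$) with no negative powers. I would verify this by computing $[y^0]$ and $[y^1]$ of $\log(1+B_0)$ explicitly — $[y^0] B_0 = 0$ since every term of $B_0$ carries a factor $(uy/(1+t_1))^\ell$ with $\ell \geq 1$, so $\log(1+B_0) = B_0 - B_0^2/2 + \cdots$ has valuation $\geq 1$ in $y$ — and matching the resulting linear and quadratic-in-$y$ terms against Stirling's expansion of $(nk/e)^{nk/2}/(k!^n \cdot \sqrt{n k}\text{-type factors})$; the domain of integration $\reals_{>-1} \times \reals^{k-1}$ for $t_1$ is forced by the $\log(1+t_1)$ in $\phi$, and convergence of the integral near $t_1 = -1$ and at infinity follows from $\phi(t_1) \to +\infty$ at both ends together with polynomial control of $B_2$, which one checks separately.
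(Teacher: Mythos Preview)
Your overall strategy matches the paper's: apply \cref{th:had_to_int} to turn the Hadamard product into a $k$-fold Gaussian integral, rescale $t_j \mapsto \sqrt{n}\,t_j$, and then massage the integrand into Laplace form. But several of the mechanisms you describe are misidentified. First, a minor slip: with $\alpha_j = 1/j$ the normalizing factor is $\frac{1}{(2\pi)^{k/2}\sqrt{\alpha_1\cdots\alpha_k}} = \frac{\sqrt{k!}}{(2\pi)^{k/2}}$, not $k!^{-1/2}$. More importantly, the two evaluations $u = \pm 1/\sqrt{k}$ do \emph{not} come from the Gaussian transform in $x_1$ (that transform has $\alpha_1 = 1$, not $1/k$). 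They arise because after rescaling, the integrand contains a factor $t_1^{nk} e^{-n t_1^2/2}$, which has two equal maxima at $t_1 = \pm\sqrt{k}$; the paper splits the $t_1$-integral over $\reals_{<0}$ and $\reals_{>0}$, reflects the first half, and then applies the shift $t_1 \mapsto \sqrt{k}(1+t_1)$ to center the saddle at the origin. This shift is the source of all the $1+t_1$ denominators in $B_0$ and of the factor $(nk/e)^{nk/2}$, which is simply $(\sqrt{k})^{nk} e^{-nk/2}$ times the earlier $n^{nk/2}$ --- no Stirling expansion is invoked anywhere in the proof.

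The second genuine gap is the handling of the $\sqrt{n}$ term. You correctly observe that $\log(1+B_0)$ has valuation $1$ in $y$, so dividing by $y^2$ leaves a $y^{-1}$ piece, but your plan to cancel it by ``matching against Stirling's expansion'' will not work. In the paper the offending term, which before the $t_1$-shift reads $2i\binom{k}{2} t_1^{-2} t_2 \sqrt{n}$ in the exponent, is removed by \emph{completing the square in $t_2$} and shifting $t_2 \mapsto t_2 + i\binom{k}{2} t_1^{-2} n^{-1/2}$. This shift is exactly what produces the additive constant $\frac{(k-1)^2}{4(1+t_1)^4}$ in the definition of $B_1$; without it you have no way to account for that term, and the bookkeeping you defer as ``the real work'' cannot be completed as sketched.
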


\begin{proof}
\cref{th:had_to_int} is applied to \cref{th:sgkn}
to obtain an integral representation
\[
    \sg^{(k)}_n
    =
    \frac{(-1)^{n k / 2} \sqrt{k!}}{(2 \pi)^{k/2}}
    \int_{\reals^k}
    \left(
        [y^k]
        \frac{e^{-i \sum_{j=1}^k t_j y^j}}
            {\sqrt{1 - y^2}}
    \right)^n
    e^{- \sum_{j=1}^k j t_j^2 / 2}
    d \vt.
\]
In order to make an integral of the form
\[
    \int_{\reals} t^n e^{-t^2/2} dt,
\]
amenable to the Laplace method,
one would apply the change of variable $t \mapsto \sqrt{n} t$
(a similar idea is presented in the Stirling expansion
at the end of \cref{sec:laplace}).
Thus, we apply the variable change $t_j \mapsto \sqrt{n} t_j$
for each $j \in [k]$
\[
    \sg^{(k)}_n
    =
    (-1)^{n k / 2} \sqrt{k!}
    \left( \frac{n}{2 \pi} \right)^{k/2}
    \int_{\reals^k}
    \left(
        [y^k]
        \frac{e^{-i \sum_{j=1}^k \sqrt{n} t_j y^j}}
            {\sqrt{1 - y^2}}
    \right)^n
    e^{- \sum_{j=1}^k n j t_j^2 / 2}
    d \vt.
\]
We would like to expand with respect to negative powers of $n$,
so that each term is negligible
compared to the previous one.
To do so, we apply the identity
$[y^k] F(\alpha y) = \alpha^k [y^k] F(y)$
with $\alpha = \sqrt{n}$
\begin{equation}
\label{eq:sgkn:middle}
    \sg^{(k)}_n
    =
    (-n)^{n k / 2}
    \sqrt{k!}
    \left( \frac{n}{2 \pi} \right)^{k/2}
    \int_{\reals^k}
    \left(
        [y^k]
        \frac{e^{-i \sum_{j=1}^k n^{(1-j)/2} t_j y^j}}
            {\sqrt{1 - y^2 / n}}
    \right)^n
    e^{- \sum_{j=1}^k n j t_j^2 / 2}
    d \vt.
\end{equation}
We develop the central term
as a series in $n^{-1/2}$ rather than a series in $y$
\begin{align}
    [y^k]
    \frac{e^{- i \sum_{j=1}^k n^{(1-j)/2} t_j y^j}}
        {\sqrt{1 - y^2 / n}}
    &=
    \sum_{\ell \geq 0}
    [z^{\ell} y^k]
    \frac{e^{- i \sum_{j=1}^k z^{j-1} t_j y^j}}
        {\sqrt{1 - z^2 y^2}}
    n^{-\ell/2}
\nonumber
    \\&=
    \sum_{\ell = 0}^k
    [z^{\ell} y^{k - \ell}]
    \frac{e^{- i y \sum_{j=1}^k z^{j-1} t_j}}
        {\sqrt{1 - z^2}}
    n^{-\ell/2}
\nonumber
    \\&=
    \sum_{\ell = 0}^k
    [z^{\ell}]
    \frac{
    \left(
        - i
        \sum_{j=1}^k z^{j-1} t_j
    \right)^{k - \ell}}
    {\sqrt{1 - z^2}}
    \frac{n^{-\ell/2}}{(k - \ell)!}
\label{eq:yk}
    \\&=
    \frac{(- i t_1)^k}{k!}
    +
    \sum_{\ell = 1}^k
    [z^{\ell}]
    \frac{
    \left(
        - i
        \sum_{j=1}^k z^{j-1} t_j
    \right)^{k - \ell}}
    {\sqrt{1 - z^2}}
    \frac{n^{-\ell/2}}{(k - \ell)!}
\nonumber
    \\&=
    \frac{(- i t_1)^k}{k!}
    \Bigg(
    1 +
    \sum_{\ell = 1}^k
    [z^{\ell}]
    \frac{
    \left(
        t_1
        + \sum_{j=2}^k z^{j-1} t_j
    \right)^{k - \ell}}
    {\sqrt{1 - z^2}}
    \frac{k!}{(k - \ell)!}
    (-i)^{-\ell}
    t_1^{-k}
    n^{-\ell/2}
    \Bigg)
\nonumber
    \\&=
    \frac{(i t_1)^k}{k!}
    \Bigg(
    1
    + \sum_{\ell = 1}^k
    [z^{\ell}]
    \frac{
    \left(
        1
        + t_1^{-1}
        \sum_{j=2}^k z^{j-1} t_j
    \right)^{k - \ell}}
    {\sqrt{1 - z^2}}
    \frac{k!}{(k - \ell)!}
    (- i t_1)^{-\ell}
    n^{-\ell/2}
    \Bigg).
\nonumber
\end{align}
Define the Laurent polynomial
\[
    A_0(y, \vt) =
    \sum_{\ell = 1}^k
    [z^{\ell}]
    \frac{
    \left(
        1
        + t_1^{-1}
        \sum_{j=2}^k z^{j-1} t_j
    \right)^{k - \ell}}
    {\sqrt{1 - z^2}}
    \frac{k!}{(k - \ell)!}
    (- i t_1)^{-\ell}
    y^{\ell},
\]
where the variable $y$ now represents $n^{-1/2}$.
Then
\[
    \sg^{(k)}_n
    =
    \frac{n^{n k / 2}}{k!^{n - 1/2}}
    \left( \frac{n}{2 \pi} \right)^{k/2}
    \int_{\reals^k}
    t_1^{n k}
    e^{n \log(1 + A_0(n^{-1/2}, \vt))}
    e^{- \sum_{j=1}^k n j t_j^2 / 2}
    d \vt.
\]
Observe that the singularity at $t_1 = 0$
is only apparent
We now isolate the terms of the exponential
that are not converging to $0$ with $n$.
We find
\[
    \log(1 + A_0(y, \vt))
    =
    2 i \binom{k}{2} t_1^{-2} t_2 y
    + \bigO(y^2)
\]
and define
\[
    A_1(y, \vt)
    =
    \exp \left(
    \frac{
        \log(1 + A_0(y, \vt))
        - 2 i \binom{k}{2} t_1^{-2} t_2 y
    }{y^2}
    \right)
\]
so
\[
    \sg^{(k)}_n
    =
    \frac{n^{n k / 2}}{k!^{n - 1/2}}
    \left( \frac{n}{2 \pi} \right)^{k/2}
    \int_{\reals^k}
    t_1^{n k}
    A_1(n^{-1/2}, \vt)
    e^{2 i \binom{k}{2} t_1^{-2} t_2 \sqrt{n}}
    e^{- \sum_{j=1}^k n j t_j^2 / 2}
    d \vt.
\]
To eliminate the $\sqrt{n}$ term in the exponential,
we factorize the quadratic polynomial
\[
    2 i \binom{k}{2} t_1^{-2} t_2 \sqrt{n}
    - n t_2^2 
    =
    - n
    \left( t_2 - i \binom{k}{2} t_1^{-2} n^{-1/2} \right)^2
    - \binom{k}{2}^2 t_1^{-4}.
\]
This motivates the change of variable
$t_2 \mapsto t_2 + i \binom{k}{2} t_1^{-2} n^{-1/2}$
and the definition
\[
    A_2(y, \vt)
    =
    A_1 \left(
        y,
        t_1,
        t_2 + i \binom{k}{2} t_1^{-2} y,
        t_3, \ldots, t_k
    \right)
    e^{- \binom{k}{2}^2 t_1^{-4}}
\]
so
\[
    \sg^{(k)}_n
    =
    \frac{n^{n k / 2}}{k!^{n - 1/2}}
    \left( \frac{n}{2 \pi} \right)^{k/2}
    \int_{\reals^k}
    t_1^{n k}
    A_2(n^{-1/2}, \vt)
    e^{- \sum_{j=1}^k n j t_j^2 / 2}
    d \vt.
\]
Recall that $n k$ is even.
The function $t_1 \mapsto t_1^{n k} e^{- n t_1^2 / 2}$
has two maxima on $\reals$,
reached at $- \sqrt{k}$ and $\sqrt{k}$.
We would rather work with only one,
so we transform our integral.
We cut the integral with respect to $t_1$
into the sum of an integral over $\reals_{< 0}$
and an integral over $\reals_{\geq 0}$.
In the first integral,
the change of variable
$t_1 \mapsto - t_1$ is applied,
and the two integrals are combined.
Set
\[
    A_3(y,\vt) =
    A_2(y, - t_1, t_2, \ldots, t_k)
    + A_2(y, \vt),
\]
then
\[
    \sg^{(k)}_n
    =
    \frac{n^{n k / 2}}{k!^{n - 1/2}}
    \left( \frac{n}{2 \pi} \right)^{k/2}
    \int_{\reals_{> 0} \times \reals^{k-1}}
    A_3(n^{-1/2}, \vt)
    t_1^{n k}
    e^{- \sum_{j=1}^k n j t_j^2 / 2}
    d \vt.
\]
Now we apply the change of variable
$t_1 \mapsto \sqrt{k} (1 + t_1)$
to move the maximum of $t_1^{n k} e^{- n t_1^2 / 2}$
to $0$.
Defining
\[
    \phi(t) = \frac{t^2}{2} + t - \log(1 + t)
\]
and
\[
    A_4(y, \vt) =
    A_3(y, \sqrt{k} (1 + t_1), t_2, \ldots, t_k),
\]
we obtain
\[
    \sg^{(k)}_n
    =
    \frac{(n k / e)^{n k / 2} \sqrt{k}}{k!^{n - 1/2}}
    \left( \frac{n}{2 \pi} \right)^{k/2}
    \int_{\reals_{> - 1} \times \reals^{k-1}}
    A_4(n^{-1/2}, \vt)
    e^{- n k \phi(t_1) - \sum_{j=2}^k n j t_j^2 / 2}
    d \vt.
\]

When we look back at the transformations
from $A_0(y,\vt)$ to $A_4(y, \vt)$,
we find that defining
\begin{align*}
    B_0(u, y, \vt)
    &=
    \sum_{\ell = 1}^k
    [z^{\ell}]
    \frac{
        \left(
            1
            + \frac{u}{1 + t_1}
            \left(
                \frac{k-1}{2} \frac{y z}{(1 + t_1)^2}
                + \sum_{j = 2}^k t_j z^{j-1}
            \right)
        \right)^{k-\ell}
    }{\sqrt{1 - z^2}}
    \frac{k!}{(k - \ell)!}
    \left( \frac{u y}{1 + t_1} \right)^{\ell},
\\
    B_1(u, y, \vt)
    &=
    \exp \left(
        \frac{\frac{u^2 t_2 y}{(1 + t_1)^2} - \log(1 + B_0(u, y, \vt)) - k (k-1)}
        {y^2}
    + \frac{(k-1)^2}{4 (1 + t_1)^4}
    + (2 k^2 u^2 - k + 1) \frac{k-1}{4}
    \right),
\\
    B_2(y, \vt)
    &=
    B_1 \left( - \frac{1}{\sqrt{k}}, y, \vt \right)
    + B_1 \left( \frac{1}{\sqrt{k}}, y, \vt \right)
\end{align*}
ensures
\[
    A_4(y, \vt) = e^{- (k^2 - 1) / 4} B_2(i y, \vt).
\]
This is an improvement, in the sense that
$B_0(u, y, \vt)$, $B_1(u, y, \vt)$ and $B_2(y, \vt)$
are power series with rational coefficients,
while the $A_j(y, \vt)$ were Laurent series
with complex coefficients.
In particular, the argument of the exponential in $B_1(u, y, \vt)$
has valuation at least $1$ in $y$.
\end{proof}

The following technical lemma will be useful
to prove \cref{th:bound:tails}.

\begin{lemma}
\label{th:inttwo}
For any nonnegative integer $d$ and $x > \sqrt{d}$,
we have
\[
    \int_x^{+\infty}
    t^{n d}
    e^{-n t^2/2}
    dt
    \leq
    \frac{x^{n d} e^{- n x^2/2}}
    {n \left( x - \frac{d}{x} \right)}.
\]
Furthermore, for any neighborhood $U$ of $\sqrt{d}$,
this integral is exponentially small compared to
\[
    \int_U
    |t|^{n d}
    e^{-n t^2/2}
    dt.
\]
\end{lemma}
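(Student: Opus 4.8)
The plan is to establish the two assertions separately, both by elementary calculus on the function $g(t) = t^{nd} e^{-nt^2/2}$.

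For the explicit bound, I would first record that $g$ is $C^1$ and positive on $\reals_{>0}$ with logarithmic derivative $g'(t)/g(t) = n(d/t - t)$, so that $g'(t) = n(d/t - t)\, g(t)$. Since the map $s \mapsto s - d/s$ is increasing on $\reals_{>0}$ (its derivative is $1 + d/s^2 > 0$), for every $t \ge x$ we have $t - d/t \ge x - d/x$, and the latter quantity is strictly positive because $x > \sqrt d$ forces $x^2 > d$. Consequently $-g'(t) = n(t-d/t)\,g(t) \ge n(x - d/x)\, g(t)$, i.e. $g(t) \le -g'(t)/\bigl(n(x-d/x)\bigr)$ for all $t \ge x$. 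Integrating this inequality from $x$ to $+\infty$ (the integral of $g$ converges, as $g$ decays faster than any exponential) and using $g(t) \to 0$ yields
\[
    \int_x^{+\infty} g(t)\,dt \;\le\; \frac{g(x)}{n(x - d/x)} \;=\; \frac{x^{nd} e^{-nx^2/2}}{n(x - d/x)},
\]
which is the claimed estimate.

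For the exponential smallness, I would write the integrand in Laplace form, $|t|^{nd} e^{-nt^2/2} = e^{n\varphi(|t|)}$ with $\varphi(s) = d\log s - s^2/2$ (read as $-s^2/2$ when $d=0$). Then $\varphi'(s) = (d - s^2)/s$, so $\varphi$ is strictly decreasing on $(\sqrt d, +\infty)$ and its maximum over $[\sqrt d, +\infty)$ is $\varphi(\sqrt d)$; in particular $\varphi(x) < \varphi(\sqrt d)$ since $x > \sqrt d$. Set $\delta := \tfrac12\bigl(\varphi(\sqrt d) - \varphi(x)\bigr) > 0$. By the first part the integral in question is at most $e^{n\varphi(x)}/\bigl(n(x-d/x)\bigr) = e^{n(\varphi(\sqrt d) - 2\delta)}/\bigl(n(x-d/x)\bigr)$. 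On the other hand, $U$ being a neighborhood of $\sqrt d$, it contains an interval $J = [\sqrt d, \sqrt d + w]$ with $w>0$, and after shrinking $w$ using the continuity of $\varphi$ at $\sqrt d$ we may assume $\varphi(s) \ge \varphi(\sqrt d) - \delta$ on $J$; hence $\int_U |t|^{nd} e^{-nt^2/2}\,dt \ge \int_J e^{n\varphi(s)}\,ds \ge w\, e^{n(\varphi(\sqrt d) - \delta)}$. Dividing the two bounds, the ratio is at most $e^{-n\delta}/\bigl(n w(x-d/x)\bigr) = \bigO(e^{-\delta n})$, which is the desired conclusion.

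The computations are routine and I do not foresee a genuine obstacle; the only points deserving a moment of care are the strict inequality $x - d/x > 0$ (this is exactly where the hypothesis $x > \sqrt d$ enters, and it is what produces the positive gap $\delta$ in the second part), and the degenerate case $d = 0$, where $\sqrt d = 0$, $\varphi(s) = -s^2/2$, and $J = [0,w]$, but where all the inequalities above still hold verbatim.
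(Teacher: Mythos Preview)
Your argument is correct. The first part is essentially the paper's proof: the paper writes the integrand as $e^{-n\phi(t)}$ with $\phi(t)=t^2/2-d\log t$, uses convexity to bound $\phi(t)\ge\phi(x)+\phi'(x)(t-x)$, and integrates the resulting exponential; your pointwise inequality $g(t)\le -g'(t)/(n\phi'(x))$ is just the differential form of the same convexity argument and yields the identical bound.

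For the second part your route is slightly different and in fact more self-contained. The paper splits into $d=0$ and $d>0$, and for $d>0$ invokes the Laplace method to obtain $\int_U |t|^{nd}e^{-nt^2/2}\,dt=\exactbigO\bigl((d/e)^{nd/2}/\sqrt{n}\bigr)$, then compares with the monotonicity bound $x^d e^{-x^2/2}<(d/e)^{d/2}$. You instead lower-bound $\int_U$ directly by $w\,e^{n(\varphi(\sqrt d)-\delta)}$ on a short interval $J\subset U$, which avoids appealing to the Laplace machinery (the very tool this lemma is meant to support). The paper's approach gives a sharper asymptotic for $\int_U$, but your cruder lower bound is all that is needed for the exponential-smallness conclusion and keeps the lemma elementary.
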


\begin{proof}
Introduce $\phi(t) = \frac{t^2}{2} - d \log(t)$, so
\[
    \int_x^{+\infty}
    t^{n d}
    e^{-n t^2/2}
    dt
    =
    \int_x^{+\infty}
    e^{- n \phi(t)}
    dt.
\]
The derivatives of the function $\phi(t)$ are
\[
    \phi'(t) = t - \frac{d}{t},
    \qquad
    \phi''(t) = 1 + \frac{d}{t^2}.
\]
Thus, $\phi(t)$ is convex
and it stays above its tangeant at point $x$,
so for any $t$
\[
    \phi(t) \geq \phi'(x) (t-x) + \phi(x).
\]
We inject this bound in the integral
\[
    \int_x^{+\infty}
    t^{n d}
    e^{-n t^2/2}
    dt
    \leq
    \int_x^{+\infty}
    e^{- n (\phi'(x) (t-x) + \phi(x))}
    dt.
\]
Since $x > \sqrt{d}$, we have $\phi'(x) > 0$.
We apply the change of variable $y = n \phi'(x) (t - x)$
\[
    \int_x^{+\infty}
    t^{n d}
    e^{-n t^2/2}
    dt
    \leq
    \frac{e^{- n \phi(x)}}{n \phi'(x)}
    \int_0^{+\infty}
    e^{- y}
    d y
    \leq
    \frac{x^{n d} e^{- n x^2/2}}
    {n \left( x - \frac{d}{x} \right)}.
\]

Let us now turn to the second point of the lemma.
For $d = 0$, we have
\[
    \int_U e^{- n t^2/2} dt =
    \exactbigO(n^{-1/2}),
\]
which is exponentially greater than our bound for $d=0$ and $x > 0$
\[
    \frac{x^{n d} e^{- n x^2/2}}
    {n \left( x - \frac{d}{x} \right)}
    =
    \frac{e^{- n x^2/2}}{n x}.
\]
Assume now $d > 0$.
By application of the Laplace method, we have
\[
    \int_U
    |t|^{n d}
    e^{-n t^2/2}
    dt
    =
    \exactbigO
    \left(
    \frac{(d/e)^{nd/2}}{\sqrt{n}}
    \right).
\]
Looking at its derivative,
the function $t \mapsto t^d e^{-t^2/2}$
is strictly decreasing on $\reals_{\geq \sqrt{d}}$,
so
\[
     (d/e)^{d/2} > x^d e^{-x^2/2},
\]
which implies that $\frac{(d/e)^{nd/2}}{\sqrt{n}}$
is exponentially greater
than $\frac{x^{n d} e^{- n x^2/2}} {n \left( x - \frac{d}{x} \right)}$,
concluding the proof.
\end{proof}

\begin{lemma*}[\ref{th:bound:tails}]
For any $k \geq 2$
and small enough neighbordhood $V \subset \reals^k$ of the origin,
there exists $\delta > 0$
such that
the number $\sgkn$ of $k$-regular graphs on $n$ vertices
is equal to
\[
    \sgkn
    =
    \frac{(n k / e)^{n k / 2} \sqrt{k}}{k!^{n - 1/2}}
    e^{- (k^2 - 1) / 4}
    \left(
    \frac{n}{2 \pi} \right)^{k/2}
    \int_V
    B_2(i n^{-1/2}, \vt)
    e^{- n k \phi(t_1) - \sum_{j=2}^k n j t_j^2 / 2}
    d \vt
    \left( 1 + \bigO(e^{-\delta n})\right),
\]
where $B_2(y,\vt)$ is defined in \cref{th:sgkn:integral}.
\end{lemma*}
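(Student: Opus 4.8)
\noindent
The plan is the usual one for Laplace-type integrals: show that the integrand of \cref{eq:sgknint} concentrates near the origin, so that restricting the domain to a small neighbourhood $V$ of $\vzero$ changes the value only by a multiplicative factor $1+\bigO(e^{-\delta n})$. The phase $k\phi(t_1)+\sum_{j=2}^k jt_j^2/2$ has $\phi(0)=\phi'(0)=0$, $\phi''(0)=2>0$, is strictly convex in $t_1$ on $(-1,+\infty)$ with $\phi\to+\infty$ at both ends of that interval, and is a positive-definite quadratic in each $t_j$; hence it has a strict minimum, equal to $0$, only at $\vzero$. The real work is (i) controlling the amplitude well enough that it cannot overcome the phase away from $\vzero$, and (ii) getting a matching lower bound on $\int_V$. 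For (i) it is cleanest to carry out the estimate in the earlier representation \cref{eq:sgkn:middle}, in which the amplitude is a genuine polynomial: writing $S_n(\vt)=[y^k]\tfrac{e^{-i\sum_j n^{(1-j)/2}t_jy^j}}{\sqrt{1-y^2/n}}$, a bookkeeping of the powers of $n$ in the coefficient extraction gives $S_n(\vt)=\tfrac{(it_1)^k}{k!}+n^{-1/2}R_n(\vt)$ with $R_n$ a polynomial of total degree $\le k$ whose coefficients are bounded uniformly in $n\ge 1$; in particular $|S_n(\vt)|\le C(1+|\vt|)^k$ for a constant $C$ independent of $n$, and $S_n\to\tfrac{(it_1)^k}{k!}$ uniformly on any compact set avoiding $\{t_1=0\}$. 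A small neighbourhood $W$ of the two maxima $\pm\sqrt k\,(1,0,\dots,0)$ of $t_1\mapsto t_1^{nk}e^{-nt_1^2/2}$ is mapped, by the changes of variable of \cref{th:sgkn:integral}, onto a neighbourhood $V$ of $\vzero$, and on such a small $W$ the folding, the complex shift of $t_2$, and the rescaling of $t_1$ are all unambiguously valid; so it suffices to bound $\int_{\reals^k\setminus W}|S_n(\vt)|^n e^{-n\sum_j jt_j^2/2}\,d\vt$ and to lower-bound the contribution of $V$.

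\noindent
I would split $\reals^k\setminus W$ into: the region where $t_1$ is bounded but bounded away from $\pm\sqrt k$ and all other coordinates are bounded; the regions where some $t_j$ with $j\ge 2$ is bounded away from $0$ (the others bounded); and the region $|\vt|\ge R$ for a large fixed $R$. On the first region, write the modulus of the integrand of \cref{eq:sgkn:middle} as $e^{-n\Phi_n(\vt)}$ with effective phase $\Phi_n(\vt)=-\log|S_n(\vt)|+\sum_{j=1}^k jt_j^2/2$, which converges uniformly to $\Phi_\infty(\vt)=-k\log|t_1|+\log k!+\sum_{j=1}^k jt_j^2/2$ away from $\{t_1=0\}$ (and on $\{|t_1|$ small$\}$ one has $|S_n|\le \varepsilon^k/k!+Cn^{-1/2}$, so $-\log|S_n|$ is large); since $\Phi_\infty$ has strict nondegenerate minima only at $\pm\sqrt k\,(1,0,\dots,0)$, with value $\Phi_\infty^\ast$ satisfying $e^{-n\Phi_\infty^\ast}=(k/e)^{nk/2}/k!^n$, we get $\Phi_n\ge\Phi_\infty^\ast+\delta_1$ off $W$ for $n$ large, so this region contributes $\bigO(e^{-n\delta_1})(k/e)^{nk/2}/k!^n$; here \cref{th:inttwo} (with $d=k$), together with the monotonicity of $t\mapsto t^k e^{-t^2/2}$ and the mild amplitude bound, cleanly disposes of the part where $|t_1|$ ranges to $+\infty$. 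The $t_j$-tails with $j\ge2$ are handled by the Gaussian estimate $\int_{|t_j|\ge\varepsilon}e^{-njt_j^2/2}\,dt_j=\bigO(e^{-nj\varepsilon^2/2})$. On $|\vt|\ge R$, $|S_n(\vt)|^n e^{-n\sum_j jt_j^2/2}\le C^n(1+|\vt|)^{nk}e^{-n\sum_j jt_j^2/2}$; substituting $\vt=\vu/\sqrt n$ and using $(1+|\vu|/\sqrt n)^{nk}\le e^{\sqrt n\,k|\vu|}$ together with $\sum_j ju_j^2\ge|\vu|^2$, the Gaussian wins and the integral is $\le\mathrm{poly}(n)\,(Ce^{-cR})^n$, which for $R$ large is $\le\mathrm{poly}(n)\,\rho^n(k/e)^{nk/2}/k!^n$ with $\rho<1$. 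Summing, $\int_{\reals^k\setminus W}|S_n(\vt)|^n e^{-n\sum_j jt_j^2/2}\,d\vt=\bigO(\mathrm{poly}(n)\,e^{-\delta_2 n})(k/e)^{nk/2}/k!^n$.

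\noindent
For the lower bound there is no cancellation near the peak: since $B_2$ has rational coefficients, $B_2(in^{-1/2},\vt)=B_2(0,\vt)+\bigO(|\vt|)+\bigO(n^{-1/2})$ with $B_2(0,\vzero)=2$ and imaginary part only $\bigO(n^{-1/2})$, so $\Real B_2(in^{-1/2},\vt)\ge 1$ on $V$ for $n$ large, and since $e^{-nk\phi(t_1)-\sum_{j\ge2}njt_j^2/2}>0$ one gets $\bigl|\int_V B_2(in^{-1/2},\vt)e^{-nk\phi(t_1)-\sum_{j\ge2}njt_j^2/2}\,d\vt\bigr|\ge\int_V e^{-nk\phi(t_1)-\sum_{j\ge2}njt_j^2/2}\,d\vt\ge c\,n^{-k/2}$ (equivalently, in the representation \cref{eq:sgkn:middle}, $S_n^n$ has essentially constant argument over $W$ and the same constant at both peaks). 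Combining with \cref{eq:sgknint} then gives $\sgkn=\bigl[\text{prefactor}\bigr]\int_V B_2(in^{-1/2},\vt)e^{-nk\phi(t_1)-\sum_{j\ge2}njt_j^2/2}\,d\vt\,(1+\bigO(e^{-\delta n}))$ for some $\delta>0$. The main obstacles are the uniform control of $S_n$ near $t_1=0$ and as $|\vt|\to+\infty$ — elementary, but requiring care so that $\Phi_n$ genuinely behaves like $\Phi_\infty$ there — and the bookkeeping that lets one pass freely between \cref{eq:sgkn:middle} and \cref{eq:sgknint}, in particular verifying that the folding and the complex shift of $t_2$ are legitimate on the fixed neighbourhood and that the no-cancellation lower bound survives them.
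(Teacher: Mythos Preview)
Your proposal is correct and follows essentially the same strategy as the paper: both arguments retreat to the polynomial representation \eqref{eq:sgkn:middle}, use the decomposition $S_n(\vt)=\tfrac{(it_1)^k}{k!}+n^{-1/2}R_n(\vt)$ with $R_n$ a bounded-coefficient polynomial of degree $\le k$, and split the complement of the peak neighbourhood into a bounded region (handled by comparing the effective phase to its limit $-k\log|t_1|+\log k!+\sum_j jt_j^2/2$, treating $|t_1|$ small separately) and a far tail (handled by the polynomial growth of the amplitude against the Gaussian). The only notable difference is the lower bound: the paper forward-references \cref{th:main:result} to assert $I_n(U)=\exactbigO((nk/e)^{nk/2}/k!^n)$, whereas you give a self-contained argument via $B_2(0,\vzero)=2$ and positivity of the phase factor, which is slightly cleaner as it avoids circularity.
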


\begin{proof}
Instead of working directly with the result of \cref{th:sgkn:integral},
we take a few steps back to find an easier expression to handle.
In the proof of \cref{th:sgkn:integral},
let us inject \cref{eq:yk}
into \cref{eq:sgkn:middle}
\[
    \sgkn =
    (-n)^{n k / 2}
    \sqrt{k!}
    \left( \frac{n}{2 \pi} \right)^{k/2}
    \int_{\reals^k}
    \left(
        \sum_{\ell = 0}^k
        [z^{\ell}]
        \frac{
        \left(
            - i
            \sum_{j=1}^k z^{j-1} t_j
        \right)^{k - \ell}}
        {\sqrt{1 - z^2}}
        \frac{n^{-\ell/2}}{(k - \ell)!}
    \right)^n
    e^{- \sum_{j=1}^k n j t_j^2 / 2}
    d \vt.
\]
Following the proof of \cref{th:sgkn:integral},
we observe that in this integral,
a set $U \subset \reals^k$
neighborhood of both $(\sqrt{k}, 0, \ldots, 0)$
and $(-\sqrt{k}, 0, \ldots, 0)$
corresponds, in \cref{eq:sgknint},
to a neighborhood $V \subset \reals^k$ of the origin.
Given $J \subset \reals^k$, let us define
\[
    I_n(J) :=
    n^{(n+1)k/2}
    \int_J
    \left(
    \sum_{\ell=0}^k
    [z^{\ell}]
    \frac{ \left(-i \sum_{j=1}^k t_j z^{j-1} \right)^{k-\ell}}{\sqrt{1-z^2}}
    \frac{n^{-\ell/2}}{(k-\ell)!}
    \right)^n
    e^{-\sum_{j=1}^k n j t_j^2/2}
    d \vt
\]
so $\sgkn = \exactbigO(I_n(\reals^k))$.
We will prove in \cref{th:main:result}
\[
    I_n(U) = \exactbigO \left( \frac{(nk/e)^{nk/2}}{k!^n} \right).
\]
Our goal is thus to prove that
$|I_n(\reals^k \setminus U)|$ is exponentially small
compared to $\frac{(nk/e)^{nk/2}}{k!^n}$.

In the expression of $I_n(J)$,
we isolate the summands corresponding to $\ell = 0$ and $\ell = 1$
and introduce a polynomial $P_k(\vt)$ with nonnegative coefficients
such that
\[
    |I_n(J)| \leq
    \bigO(1)
    n^{(n+1)k/2}
    \int_{J'}
    \left(
    \frac{t_1^k}{k!}
    + \frac{t_1^{k-2}}{(k-2)!}
    t_2 n^{-1/2}
    + \frac{P_k(\vt)}{k!} n^{-1}
    \right)^n
    e^{- \sum_{j=1}^k n j t_j^2/2}
    d \vt.
\]
After factorization by $k!$, and bounding $k (k-1) \leq k^2$,
we obtain
\begin{equation}
\label{eq:IJ}
    |I_n(J)| \leq
    \bigO(1)
    \frac{n^{(n+1)k/2}}{k!^n}
    \int_{J'}
    \left(
    t_1^k
    + k^2 t_1^{k-2} t_2 n^{-1/2}
    + P_k(\vt) n^{-1}
    \right)^n
    e^{- \sum_{j=1}^k n j t_j^2/2}
    d \vt.
\end{equation}
For any nonnegative values $a$, $b$, $c$, we have
\[
    (a + b + c)^n \leq 3^n a^n + 3^n b^n + 3^n c^n
\]
(by considering whether $a$, $b$, or $c$ is the largest),
so we also have the cruder bound
\begin{equation}
\label{eq:IJJ}
    |I_n(J)| \leq
    \bigO(1)
    \frac{n^{(n+1)k/2}}{k!^n}
    \int_{J'}
    \left(
    3^n t_1^{nk}
    + 3^n k^{2n} t_1^{m(k-2)} t_2^n n^{-n/2}
    + 3^n P_k(\vt) n^{-n}
    \right)
    e^{- \sum_{j=1}^k n j t_j^2/2}
    d \vt.
\end{equation}
In the following, we consider successively three cases
\begin{itemize}
\item
$J_1 \subset [- \epsilon, \epsilon]$
for some small enough $\epsilon > 0$,
\item
$J_j \subset \reals_{\geq x}$
for some $j$ and some large enough $x$,
\item
$J \subset \reals \setminus U$ is bounded
and $J_1 \subset \reals \setminus [- \epsilon, \epsilon]$.
\end{itemize}
In each case, we prove that $|I_n(J)|$
is exponentially small compared to $\frac{(nk/e)^{nk/2}}{k!^n}$.
Since this covers all possibilities
for $J \subset \reals \setminus U$,
this will conclude the proof.

\paragraph{Case where $J_1 \subset [- \epsilon, \epsilon]$.}
Then \cref{eq:IJJ} implies
\[
    |I_n(J)| \leq
    \bigO(1)
    \frac{n^{(n+1)k/2}}{k!^n}
    \int_{J'}
    \left(
    3^n \epsilon^{nk}
    + 3^n k^{2n} \epsilon^{m(k-2)} t_2^n n^{-n/2}
    + 3^n P_k(\epsilon, t_2, \ldots, t_k) n^{-n}
    \right)
    e^{- \sum_{j=1}^k n j t_j^2/2}
    d \vt.
\]
and our bound on $|I_n(J)|$ becomes a sum of three terms.
The first term is
\[
    \bigO(1)
    \frac{n^{(n+1)k/2}}{k!^n}
    \int_{\reals_{\geq 0}^{k-1}}
    3^n \epsilon^{nk}
    e^{- n \sum_{j=2}^k t_j^2/2}
    d t_2 \cdots d t_k
    =
    \bigO \left( \frac{n^{nk/2}}{k!^n} (3 \epsilon^k)^n \right).
\]
Choosing $\epsilon > 0$ small enough so that
$3 \epsilon^k \leq (k/e)^{k/2}$
ensures that this bound is exponentially small
compared to $\frac{(nk/e)^{nk/2}}{k!^n}$.
For the second and third terms, we will use the crude bound
\[
    \int_{\reals_{\geq 0}}
    t^{n d}
    e^{-n t^2/2}
    dt
    = \bigO(1)^n,
\]
where the big $\bigO$ depends only
on the nonnegative integer $d$,
obtained for example by application of the Laplace method.
The second term is
\[
    \bigO(1)
    \frac{n^{(n+1)k/2}}{k!^n}
    \int_{\reals_{\geq 0}^{k-1}}
    3^n k^{2n} \epsilon^{n(k-2)} t_2^n n^{-n/2}
    e^{- n \sum_{j=2}^k t_j^2/2}
    d t_2 \cdots d t_k
    =
    \bigO(1)^n \frac{n^{nk/2}}{n^{n/2}},
\]
which is more than exponentially small
compared to $\frac{(nk/e)^{nk/2}}{k!^n}$.
The third term is treated in the same way
\[
    \bigO(1)
    \frac{n^{(n+1)k/2}}{k!^n}
    \int_{\reals_{\geq 0}^{k-1}}
    3^n P_k(\epsilon, t_2, \ldots, t_k)^n n^{-n}
    e^{- n \sum_{j=2}^k t_j^2/2}
    d t_2 \cdots d t_k
    =
    \bigO(1)^n \frac{n^{nk/2}}{n^n}.
\]

\paragraph{Case where $J_j \subset \reals_{\geq x}$ for some $j$ and large enough $x$.}
We apply the same idea as in the previous case.
The bound from \cref{eq:IJJ} is now the sum of the following three terms.
The first term is
\[
    \bigO(1)
    \frac{n^{(n+1)k/2}}{k!^n}
    \int_{J'}
    3^n t_1^{nk}
    e^{- \sum_{j=1}^k n j t_j^2/2}
    d \vt.
\]
Applying our crude bound on the integrals and \cref{th:inttwo},
there exists a constant $C$, depending only on $k$, such that
the first term is asymptotically bounded by
\[
    C^n
    n^{nk/2}
    e^{-n x^2/2}.
\]
We now choose $x$ large enough to ensure
that this is exponentially small
compared to $\frac{(nk/e)^{nk/2}}{k!^n}$.
Namely, we choose $x$ such that
\[
    e^{-x^2/2} < \frac{(k/e)^{k/2}}{C k!}.
\]
The two other terms
\[
    \bigO(1)
    \frac{n^{(n+1)k/2}}{k!^n}
    \int_{J'}
    3^n k^{2n} t_1^{m(k-2)} t_2^n n^{-n/2}
    e^{- \sum_{j=1}^k n j t_j^2/2}
    d \vt
    = \bigO(1)^n \frac{n^{nk/2}}{n^{n/2}}
\]
and
\[
    \bigO(1)
    \frac{n^{(n+1)k/2}}{k!^n}
    \int_{J'}
    3^n P_k(\vt) n^{-n}
    e^{- \sum_{j=1}^k n j t_j^2/2}
    d \vt
    = \bigO(1)^n \frac{n^{nk/2}}{n^n}
\]
and treated as in the previous case.

\paragraph{Case where $J \subset \reals \setminus U$ is bounded and $J_1 \subset \reals \setminus [- \epsilon, \epsilon]$.}
Now $J'_1 \subset \reals_{\geq \epsilon}$,
so we can divide by $t_1$ more easily.
We factorize $t_1$ in our bound on $|I_n(J)|$ from \cref{eq:IJ}
and bound $t_1^{-1}$ by $\epsilon^{-1}$
\[
    |I_n(J)| \leq
    \bigO(1)
    \frac{n^{(n+1)k/2}}{k!^n}
    \int_{J'}
    \left(
    1
    + k^2 \epsilon_1^{-2} t_2 (2n)^{-1/2}
    + \epsilon_1^{-k} P_k(\vt) n^{-1}
    \right)^n
    t_1^{nk}
    e^{- \sum_{j=1}^k n j t_j^2/2}
    d \vt.
\]
We use the inequality $(1+x)^n \leq e^{n x}$
and isolate the summands corresponding to $j=1$ and $j=2$
\[
    |I_n(J)| \leq
    \bigO(1)
    \frac{n^{(n+1)k/2}}{k!^n}
    \int_{J'}
    e^{k^2 \epsilon_1^{-2} t_2 \sqrt{n/2}
        + \epsilon_1^{-k} P_k(\vt)}
    t_1^{nk}
    e^{- n t_1^2/2}
    e^{- n t_2^2/2}
    e^{- n \sum_{j=3}^k t_j^2/2}
    d \vt.
\]
Since $J$ is bounded, the term $e^{\epsilon_1^{-k} P_k(\vt)}$
is bounded.
We factorize
\[
    k^2 \epsilon_1^{-2} t_2 \sqrt{n}
    - n t_2^2/2
    =
    - \frac{n}{2}
    \left( t_2 - k^2 \epsilon_1^{-2} n^{-1/2} \right)
    + \frac{k^4 \epsilon^{-4}}{4}
\]
and shift $t_2$ by $\frac{k^2 \epsilon_1^{-2}}{\sqrt{n}}$,
which is smaller than $1$ for $n$ large enough
\[
    |I_n(J)| \leq
    \bigO(1)
    \frac{n^{(n+1)k/2}}{k!^n}
    \int_{\reals_{\geq 0} \times \reals_{\geq -1} \times \reals_{\geq 0}^{k-2} \setminus U}
    t_1^{nk}
    e^{- \sum_{j=1}^k n j t_j^2/2}
    d \vt.
\]
Because the integration domain stays outside
of the vicinity of $(\sqrt{k}, 0, \ldots, 0)$,
\cref{th:inttwo} implies that it is exponentially small
compared to the full integral
\[
    \frac{n^{(n+1)k/2}}{k!^n}
    \int_{\reals^k}
    t_1^{nk}
    e^{- \sum_{j=1}^k n j t_j^2/2}
    d \vt,
\]
whose asymptotics $\exactbigO \left( \frac{(nk/e)^{nk/2}}{k!^n} \right)$
is extracted by the Laplace method.
\end{proof}

\begin{theorem*}[\ref{th:main:result}]
Assume $k \geq 2$ and $n k$ is even,
then the asymptotic expansion
of the number of $k$-regular graphs
on $n$ vertices is
\[
    \sgkn
    \approx
    \frac{(n k / e)^{n k/2}}{k!^n}
    \frac{e^{- (k^2 - 1) / 4}}{\sqrt{2}}
    \tsgk(n^{-1})
\]
where for all $r$, the $r$th coefficient
of the formal power series $\tsgk(z)$
is a polynomial with rational coefficients
in $k$, $k^{-1}$ and $\indic_{j \leq k}$
for $j \in [3, 2r + 2]$,
explicitly computable using the formula
(where we set $\indic_{2 \leq k} = 1$)
\begin{align*}
    \psi(t)
    &=
    \Bigg(
        1
        + \frac{
            \log \big( \frac{1}{1+t} \big)
            + t
            - \frac{t^2}{2}}
        {t^2}
    \Bigg)^{-1/2},
\\
    T(x)
    &=
    x \psi(T(x)),
\\
    u_{p,q}
    &=
    [s^p] \frac{1}{(1 + T(s))^q},
\\
    v_{p,q}(\vt)
    &=
    [z^p]
    \frac{\left( \sum_{j \geq 2} t_j z^{j-1} \right)^q}
    {\sqrt{1 - z^2}},
\\
    B_{0,r}(u, \vt)
    &=
    \sum_{\substack{1 \leq \ell,\ 0 \leq a \leq \ell,\ 0 \leq b\\ a + b + \ell \leq r}}
    \bigg( \prod_{m=0}^{a + b + \ell - 1} (k-m) \bigg)
    \frac{u^{a + b + \ell} t_1^{r - a - b - \ell}}{a! b!}
    \left( \frac{k-1}{2} \right)^a
    u_{r - a - b - \ell, 3 a + b + \ell}
    v_{\ell - a, b}(\vt),
\\
    C_1(u, s, \vt)
    &=
    \exp \left(
        - \frac{\log \left(
            1
            + \sum_{j \geq 1}
            B_{0,j}(u, \vt)
            s^j \right)
            - k (k-1) \frac{u^2 s^2 t_2}{(1 + T(s t_1))^2}}
        {s^2}
    + \frac{(k-1)^2}{4 (1 + T(s t_1))^4}
    + (2 k^2 u^2 - k + 1) \frac{k-1}{4}
    \right),
\\
    C_2(s, \vt)
    &=
    \left(
    C_1 \left( - \frac{1}{\sqrt{k}}, s, \vt \right)
    + C_1 \left( \frac{1}{\sqrt{k}}, s, \vt \right)
    \right)
    T'(s t_1),
\\
    [z^r] \tsgk(z)
    &=
    (-1)^r
    e^{- t_1^2 / (4k) - \sum_{j=2}^{2r + 2} t_j^2 / (2j)}
    \odot_{\vt = \vone}
    [s^{2r}]
    C_2(s, \vt).
\end{align*}
\end{theorem*}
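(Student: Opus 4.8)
The plan is to start from the integral representation of \cref{th:bound:tails}, which already localizes the integral to a fixed neighborhood $V$ of the origin up to a factor $1+\bigO(e^{-\delta n})$, discard that exponentially small factor (invisible in an asymptotic expansion, by the remark in \cref{sec:def}), and then run the Laplace method on what remains. The phase is $\Phi(\vt)=k\phi(t_1)+\sum_{j=2}^k jt_j^2/2$ with $\phi(t)=t^2/2+t-\log(1+t)$; it vanishes together with its gradient at the origin and has the diagonal, positive‑definite Hessian $\mathrm{diag}(2k,2,3,\dots,k)$, of determinant $2k\cdot k!$. Only the $t_1$‑direction is non‑Gaussian, so I would apply to the $t_1$‑integral the change of variable $t_1=T(x_1)$ of \cref{th:laplace} (with the role of $\phi$ there played by $k\phi$, so $\phi''(0)=2k$): one checks that the $\psi$ and $T$ produced are exactly those in the statement and that $k\phi(T(x_1))=kx_1^2$, so the $t_1$‑phase becomes the Gaussian $e^{-nkx_1^2}$ at the cost of the Jacobian $T'(x_1)$.

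After this substitution every direction is Gaussian. Fixing $r$ and Taylor‑truncating the amplitude $B_2(in^{-1/2},T(x_1),t_2,\dots,t_k)T'(x_1)$ in $(x_1,t_2,\dots,t_k)$ and in $n^{-1/2}$ to the order that matters (the remainder absorbed into $\bigO(n^{-r})$ exactly as in the proof of \cref{th:laplace}, using \cref{th:double_factorial_integral}), I would turn the Gaussian integrals into evaluated exponential Hadamard products via \cref{th:had_to_int}. This contributes the factor $(2\pi)^{k/2}n^{-k/2}(2k\cdot k!)^{-1/2}$, which combines with the prefactor $(nk/e)^{nk/2}k!^{-(n-1/2)}e^{-(k^2-1)/4}(n/2\pi)^{k/2}$ of \cref{th:bound:tails} into precisely $\tfrac{(nk/e)^{nk/2}}{k!^n}\tfrac{e^{-(k^2-1)/4}}{\sqrt2}$, and leaves the series as the evaluated Hadamard product of that same amplitude against $\exp\!\big(\tfrac{z}{4k}x_1^2+\sum_{j\ge2}\tfrac{z}{2j}t_j^2\big)$, with $z=n^{-1}$.

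The substantive step — and the main obstacle — is recognizing this raw output as the closed form of the statement. I would rescale every Hadamard variable by the single parameter $s:=in^{-1/2}$ (so $s^2=-n^{-1}=-z$) using the identity $A(\alpha x)\odot_x B(x)=A(x)\odot_x B(\alpha x)$: since $z/s^2=-1$, the kernel collapses to $e^{-t_1^2/(4k)-\sum_{j\ge2}t_j^2/(2j)}$, while the amplitude becomes exactly $C_2(s,\vt)=B_2\big(s,T(st_1),st_2,\dots,st_k\big)T'(st_1)$. Expanding the inner $B_0$ of $B_2$ by the multinomial theorem, factoring out powers of $s$, and rewriting each $\big(1+T(st_1)\big)^{-q}=\sum_p u_{p,q}t_1^p s^p$ by Lagrange inversion (with $u_{p,q}=[s^p](1+T(s))^{-q}$, which is where $T(x)=x\psi(T(x))$ enters) then reproduces verbatim the definitions of $v_{p,q}$, $B_{0,j}$, $C_1$ (the two saddle heights $t_1=\pm\sqrt k$ of the original integrand giving the two evaluations $u=\pm1/\sqrt k$) and $C_2$; extracting $[z^r]$ amounts to extracting $(-1)^r[s^{2r}]$, and the cutoffs $2r+2$ on the $t_j$'s are forced by the degree constraints in the $v_{p,q}$. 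I expect the bulk of the work to be precisely this bookkeeping — checking that the composition of the shift $t_1\mapsto\sqrt k(1+t_1)$ of \cref{th:sgkn:integral}, the Laplace substitution $t_1=T(x_1)$ and the rescaling by $s$ produces exactly the combination $T(st_1)$, and that all multinomial and Lagrange‑inversion coefficients match.

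Two subsidiary points remain. First, only integer powers of $n^{-1}$ occur: this follows from the symmetry $B_2(-y,t_1,-t_2,\dots,-t_k)=B_2(y,\vt)$ — itself a consequence of the parity bookkeeping in the definitions of $B_0$, $B_1$, under which a monomial picks up $(-1)^{\deg_{t_2,\dots,t_k}}$ when $(u,y)\mapsto(-u,-y)$ — together with the evenness of the Gaussian measure in $t_2,\dots,t_k$; the same argument shows the integral is real. Second, $[z^r]\tsgk(z)$ is a polynomial in $k$, $k^{-1}$ and the indicators $\indic_{j\le k}$ ($3\le j\le 2r+2$) over $k^r$: indeed $\psi$, $T$, $u_{p,q}$, $v_{p,q}$ have rational coefficients, the only $k$‑dependence beyond rational functions of $k$ is the replacement of $\sum_{j=2}^k$ by $\sum_{j\ge2}$ in the $v_{p,q}$ (discarded terms vanishing unless $j\le k$), and the powers of $k^{-1/2}$ coming from $u=\pm1/\sqrt k$ and from the rescaling combine into a single $k^{-r}$ once the Hadamard product has forced even total degree in $\vt$.
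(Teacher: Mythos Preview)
Your proposal is correct and follows essentially the same route as the paper: start from \cref{th:bound:tails}, apply the Laplace change of variable $t_1=T(x_1)$ from \cref{th:laplace} to the non-Gaussian direction, read off the prefactor, and then rescale all Hadamard variables by $s=in^{-1/2}$ to reach the definitions of $B_{0,j}$, $C_1$, $C_2$ via the multinomial expansion and the $u_{p,q}$. The only cosmetic difference is that the paper argues ``only integer powers of $n^{-1}$ survive'' from the reality of $\sgkn$, whereas you use the equivalent parity symmetry $B_2(-y,t_1,-t_2,\dots,-t_k)=B_2(y,\vt)$ combined with evenness of the Gaussian kernel.
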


\begin{proof}
We start with the expression from \cref{th:bound:tails}
\[
    \sgkn
    =
    \frac{(n k / e)^{n k / 2} \sqrt{k}}{k!^{n - 1/2}}
    e^{- (k^2 - 1) / 4}
    \left(
    \frac{n}{2 \pi} \right)^{k/2}
    \int_V
    B_2(i n^{-1/2}, \vt)
    e^{- n k \phi(t_1) - \sum_{j=2}^k n j t_j^2 / 2}
    d \vt
    \left( 1 + \bigO(e^{-\delta n})\right),
\]
for some neighbordhood $V \subset \reals^k$ of the origin,
chosen small enough to ensure $B_2(y, \vt)$
is analytic on $[0,\epsilon] \times V$ for some $\epsilon > 0$.
Let us define
\[
    \psi(t) =
    \left( \frac{\phi(t) - \phi(0)}{\phi''(0) t^2/2} \right)^{-1/2}
    =
    \frac{1}{\sqrt{\phi(t) / t^2}}
    \quad \text{and} \quad
    T(x) = x \psi(T(x))
\]
and the formal power series in $\sqrt{z}$
\[
    \tsg(z) =
    e^{z x_1^2 / (4 k) + \sum_{j=2}^k z x_j^2 / (2 j)}
    \odot_{\vx = \vone}
    B_2 \left( i \sqrt{z}, T(x_1), x_2, \ldots, x_k \right)
    T'(x_1),
\]
the Laplace method from \cref{th:laplace} then implies
\[
    \sgkn
    \approx
    \frac{(n k / e)^{n k/2}}{k!^n \sqrt{2}}
    e^{- (k^2 - 1) / 4}
    \tsg(n^{-1}).
\]
Since $\sgkn$ is real,
any monomial of $B_2(i \sqrt{z}, T(x_1), x_2, \ldots, x_k)$
containing an odd power in $i \sqrt{z}$
must vanish after the Hadamard product
(otherwise, there would be a non real term
in the asymptotic expansion).
Thus, $\tsg(z)$ is in fact a formal power series in $z$.

However, we have not yet reached our goal
of expressing the coefficients of the asymptotic expansion
as functions of a formal value $k$
(in particular, we have sums indexed by $k$ in our expressions).
Consider the $r$th coefficient of $\tsg(z)$
\begin{align}
    [z^r] \tsg(z)
    &=
    [z^r]
    e^{z t_1^2 / (4k) + \sum_{j=2}^k z t_j^2 / (2j)}
    \odot_{\vt = \vone}
    B_2 \left( i \sqrt{z}, T(t_1), t_2, \ldots, t_k \right)
    T'(t_1)
    \nonumber
    \\&=
    [s^{2r}]
    e^{s^2 t_1^2 / (4k) + \sum_{j=2}^k s^2 t_j^2 / (2j)}
    \odot_{\vt = \vone}
    B_2 \left( i s, T(t_1), t_2, \ldots, t_k \right)
    T'(t_1)
    \nonumber
    \\&=
    e^{t_1^2 / (4k) + \sum_{j=2}^k t_j^2 / (2j)}
    \odot_{\vt = \vone}
    [s^{2r}]
    B_2 \left( i s, T(s t_1), s t_2, \ldots, s t_k \right)
    T'(s t_1)
    \nonumber
    \\&=
    e^{t_1^2 / (4k) + \sum_{j=2}^k t_j^2 / (2j)}
    \odot_{\vt = \vone}
    i^{2r}
    [s^{2r}]
    B_2 \left( s, T(- i s t_1), - i s t_2, \ldots, - i s t_k \right)
    T'(- i s t_1)
    \nonumber
    \\&=
    (-1)^r
    e^{- t_1^2 / (4k) - \sum_{j=2}^k t_j^2 / (2j)}
    \odot_{\vt = \vone}
    [s^{2r}]
    B_2 \left( s, T(s t_1), s t_2, \ldots, s t_k \right)
    T'(s t_1)
    \label{eq:FFF}
\end{align}
This motivates the definition of
\begin{align*}
    B_{0,r}(u, \vt)
    &=
    [s^r]
    B_0 \left(u, s, T(s t_1), s t_2, \ldots, s t_k \right)
    \\&=
    [s^r]
    \sum_{\ell=1}^k
    [z^{\ell}]
    \frac{ \left(
        1
        + 
        \frac{k-1}{2}
        \frac{u s z}{(1 + T(s t_1))^3}
        + \frac{u}{1 + T(s t_1)}
        \sum_{j=2}^k s t_j z^{j-1}
    \right)^{k - \ell}}
    {\sqrt{1 - z^2}}
    \frac{k!}{(k-\ell)!}
    \left( \frac{u s}{1 + T(s t_1)} \right)^{\ell}
    \\&=
    \sum_{\ell=1}^k
    \sum_{a + b + \ell \leq k}
    \binom{k - \ell}{a, b, k - \ell - a - b}
    \frac{k! u^{\ell}}{(k-\ell)!}
    [z^{\ell} s^{r - \ell}]
    \frac{
        \left(
            \frac{k-1}{2}
            \frac{u s z}{(1 + T(s t_1))^3}
        \right)^a
        \left(
            \frac{u}{1 + T(s t_1)}
            \sum_{j=2}^k s t_j z^{j-1}
        \right)^b}
    {\sqrt{1 - z^2} (1 + T(s t_1))^{\ell}}
    \\&=
    \sum_{\ell=1}^k
    \sum_{a + b + \ell \leq k}
    \frac{k! u^{\ell + a + b}}
    {a! b! (k - \ell - a - b)!}
    [z^{\ell - a} s^{r - \ell - a - b}]
    \frac{
        \left(
            \frac{k-1}{2}
        \right)^a
        \left(
            \sum_{j=2}^k t_j z^{j-1}
        \right)^b}
    {\sqrt{1 - z^2} (1 + T(s t_1))^{\ell + 3 a + b}}
    \\&=
    \sum_{\ell=1}^k
    \sum_{a + b + \ell \leq k}
    \frac{k! u^{\ell - a - b} t_1^{r - \ell - a - b}}
    {a! b! (k - \ell - a - b)!}
    \left( \frac{k-1}{2} \right)^a
    [z^{\ell - a} s^{r - \ell - a - b}]
    \frac{
        \left(
            \sum_{j=2}^k t_j z^{j-1}
        \right)^b}
    {\sqrt{1 - z^2} (1 + T(s))^{\ell + 3 a + b}}
    \\&=
    \sum_{\substack{1 \leq \ell,\ 0 \leq a \leq \ell,\ 0 \leq b\\ a + b + \ell \leq \min(k, r)}}
    \frac{k! u^{\ell + a + b} t_1^{r - \ell - a - b}}
    {a! b! (k - \ell - a - b)!}
    \left( \frac{k-1}{2} \right)^a
    u_{r - \ell - a - b, \ell + 3 a + b}
    v_{\ell - a, b}(\vt)
    \\&=
    \sum_{\substack{1 \leq \ell,\ 0 \leq a \leq \ell,\ 0 \leq b\\ a + b + \ell \leq r}}
    \frac{k! u^{a + b + \ell} t_1^{r - a - b - \ell}}
    {a! b! (k - a - b - \ell)!}
    \left( \frac{k-1}{2} \right)^a
    u_{r - a - b - \ell, 3 a + b + \ell}
    v_{\ell - a, b}(\vt)
    \indic_{a + b + \ell \leq k}.
\end{align*}
The quotient of factorials, rewritten as a product,
simplifies with the indicator function,
because when $a+b+\ell < k$, the product vanishes
\[
    \frac{k!}{(k - a - b - \ell)!}
    \indic_{a + b + \ell \leq k}
    =
    \bigg(
    \prod_{m=0}^{a + b + \ell - 1}
    (k-m)
    \bigg)
    \indic_{a + b + \ell \leq k}
    =
    \prod_{m=0}^{a + b + \ell - 1}
    (k-m).
\]
Observe that $B_{0,r}(u,\vt)$
is a polynomial with rational coefficients
in the variables $k$,
$\indic_{j \leq k}$ for $j$ in $[1, r]$,
$u$ and $t_1, \ldots, t_r$.
By construction, we have
\[
    B_0(u, s, T(s t_1), s t_2, \ldots, s t_k)
    =
    \sum_{j \geq 0}
    B_{0,j}(u, \vt)
    s^j.
\]
Let us define
\[
    C_1(u, s, \vt) =
    B_1(u, s, T(s t_1), s t_2, \ldots, s t_k).
\]
Then $[s^r] C_1(u, s, \vt)$
is equal to
\[
    [s^r]
    \exp \left(
        - \frac{\log \left(
            1
            + \sum_{j=1}^{r+2}
            B_{0,j}(u, \vt)
            s^j \right)
            - k (k-1) \frac{u^2 s^2 t_2}{(1 + T(s t_1))^2}}
        {s^2}
    + \frac{(k-1)^2}{4 (1 + T(s t_1))^4}
    + (2 k^2 u^2 - k + 1) \frac{k-1}{4}
    \right),
\]
which is a polynomial with rational coefficients
in the variables
$k$,
$\indic_{j \leq k}$ for $j$ in $[1, r + 2]$,
$u$ and $t_1, \ldots, t_{r + 2}$.
Finally, set
\[
    C_2(s, \vt)
    =
    B_2(s, T(s t_1), s t_2, \ldots, s t_k)
    T'(s t_1),
\]
then
\[
    C_2(s, \vt)
    =
    \left(
    C_1 \left( - \frac{1}{\sqrt{k}}, s, \vt \right)
    + C_1 \left( \frac{1}{\sqrt{k}}, s, \vt \right)
    \right)
    T'(s t_1)
\]
and $[s^{2r}] C_2(s, \vt)$
is a polynomial with rational coefficients
in the variables
$k$, $k^{-1}$,
$\indic_{j \leq k}$ for $j$ in $[1, 2r + 2]$,
and $t_1, \ldots, t_{2r + 2}$.

For any positive integer values $k$ and $r$,
$C_2(s,\vt)$ contains no variable $t_j$
for any $j > \min(k, 2r + 2)$
-- or, said otherwise, for all $j$,
any monomial containing $t_j$
also contains $\indic_{\ell \leq k}$
for some $j \leq \ell$.
Thus, we rewrite \cref{eq:FFF} as
\[
    [z^r] \tsg(z)
    =
    (-1)^r
    e^{- t_1^2 / (4k) - \sum_{j=2}^{2r + 2} t_j^2 / (2j)}
    \odot_{\vt = \vone}
    [s^{2r}]
    C_2(s, \vt),
\]
which concludes the proof.
\end{proof}

    \subsection{Proofs of \cref{sec:connected:regular:graphs}}

    \subsubsection{Proofs of \cref{sec:divergent:series}}

\begin{lemma*}[\ref{th:alpha:beta:gamma}]
Consider positive $\alpha$, $\beta$, a real value $\gamma$
and a positive sequence $(a_n)_{n > 0}$ satisfying
$a_n = \exactbigO(n^{\alpha} \beta^n n^{\gamma})$,
then for any fixed $R \in \integers_{> 0}$,
as $n$ tends to infinity, we have
\[
    \sum_{j=R}^{n-R} a_j a_{n-j} =
    \bigO(a_{n-R}).
\]
\end{lemma*}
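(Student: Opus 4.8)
The plan is to reduce the statement to a purely analytic estimate on the model sequence
\[
    b_j := j^{\alpha j}(n-j)^{\alpha(n-j)}\,j^{\gamma}(n-j)^{\gamma}\,\beta^{n},
\]
and then to split the range of summation into two short tails near the endpoints, where $b_j$ decays geometrically, and a long middle block, where $b_j$ is exponentially smaller than $b_R$. First I would fix constants $0<c\le C$ with $c\,m^{\alpha m}\beta^{m}m^{\gamma}\le a_m\le C\,m^{\alpha m}\beta^{m}m^{\gamma}$ for all $m\ge R$; the hypothesis $a_n=\exactbigO(n^{\alpha n}\beta^{n}n^{\gamma})$ supplies this for $m$ beyond some threshold, and the finitely many smaller indices are absorbed into $c$ and $C$ since each $a_m>0$. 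Then $c^{2}b_j\le a_ja_{n-j}\le C^{2}b_j$ for every $j\in[R,n-R]$, while
\[
    b_R=b_{n-R}=\bigl(R^{\alpha R}R^{\gamma}\beta^{R}\bigr)\,(n-R)^{\alpha(n-R)}(n-R)^{\gamma}\beta^{n-R}=\exactbigO(a_{n-R}).
\]
So it suffices to prove $\sum_{j=R}^{n-R}b_j=\bigO(b_R)$. Using the symmetry $b_j=b_{n-j}$, I would split this sum into the tail $j\in[R,\lfloor\varepsilon n\rfloor]$, its mirror image $j\in[n-\lfloor\varepsilon n\rfloor,n-R]$, and the remaining middle block, for a small fixed $\varepsilon\in(0,\tfrac12)$ chosen below.

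The key estimate is a ratio bound carrying no accumulating constant. From $\tfrac{d}{dt}(t\log t)=1+\log t$ one gets $(j+1)\log(j+1)-j\log j=\int_j^{j+1}(1+\log t)\,dt<1+\log(j+1)$ and $(n-j)\log(n-j)-(n-j-1)\log(n-j-1)=\int_{n-j-1}^{n-j}(1+\log t)\,dt>1+\log(n-j-1)$, hence
\[
    \frac{(j+1)^{\alpha(j+1)}(n-j-1)^{\alpha(n-j-1)}}{j^{\alpha j}(n-j)^{\alpha(n-j)}}
    <e^{\alpha}(j+1)^{\alpha}\cdot e^{-\alpha}(n-j-1)^{-\alpha}
    =\Bigl(\frac{j+1}{n-j-1}\Bigr)^{\alpha},
\]
the cancellation of $e^{\pm\alpha}$ being exactly what matters. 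The leftover factor $\bigl(\tfrac{j+1}{j}\bigr)^{\gamma}\bigl(\tfrac{n-j-1}{n-j}\bigr)^{\gamma}$ stays below a constant $C_\gamma$ depending only on $\gamma$ and $R$ whenever $R\le j\le n-R-1$, so $b_{j+1}/b_j\le C_\gamma\bigl(\tfrac{j+1}{n-j-1}\bigr)^{\alpha}$ on that range. Since $\alpha>0$, I fix $\varepsilon\in(0,\tfrac12)$ with $C_\gamma\bigl(\tfrac{\varepsilon}{1-\varepsilon}\bigr)^{\alpha}\le\tfrac12$; then for $R\le j<\lfloor\varepsilon n\rfloor$ one has $\tfrac{j+1}{n-j-1}\le\tfrac{\varepsilon}{1-\varepsilon}$, hence $b_{j+1}\le\tfrac12 b_j$ and thus $b_j\le 2^{-(j-R)}b_R$. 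Summing the geometric series gives $\sum_{j=R}^{\lfloor\varepsilon n\rfloor}b_j\le 2b_R$, and the mirror tail contributes at most $2b_R$ as well.

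For the middle block $\lfloor\varepsilon n\rfloor<j<n-\lfloor\varepsilon n\rfloor$, I would use that $g(j):=j\log j+(n-j)\log(n-j)$ is strictly convex on $(0,n)$ (second derivative $\tfrac1j+\tfrac1{n-j}>0$), so on this symmetric subinterval its maximum is attained at an endpoint, giving $g(j)\le g(\varepsilon n)=n\log n-H(\varepsilon)\,n$ with $H(\varepsilon)=-\varepsilon\log\varepsilon-(1-\varepsilon)\log(1-\varepsilon)>0$; moreover $j^{\gamma}(n-j)^{\gamma}=e^{2\gamma\log n+\bigO(1)}$ there. Comparing with $\log b_R=\alpha n\log n-\alpha R\log n+\bigO(\log n)+n\log\beta$ yields $\log b_j-\log b_R\le-\alpha H(\varepsilon)\,n+\bigO(\log n)\le-\tfrac12\alpha H(\varepsilon)\,n$ for $n$ large, uniformly over the block; hence the middle contributes at most $n\,e^{-\alpha H(\varepsilon)n/2}b_R=\smallo(b_R)$. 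Adding the three pieces gives $\sum_{j=R}^{n-R}b_j=\bigO(b_R)$, and therefore $\sum_{j=R}^{n-R}a_ja_{n-j}\le C^{2}\sum_{j=R}^{n-R}b_j=\bigO(b_R)=\bigO(a_{n-R})$.

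The only delicate point is the ratio estimate above: the crude bound $b_{j+1}/b_j=\bigO(1)\cdot\bigl(\tfrac{j+1}{n-j-1}\bigr)^{\alpha}$ would accumulate a factor $\bigO(1)^{\,j-R}$ along the telescoping product and ruin the geometric summation, so one has to track the exponential constants exactly and exploit their cancellation to obtain a ratio bounded by $C_\gamma\bigl(\tfrac{j+1}{n-j-1}\bigr)^{\alpha}$ with $C_\gamma$ independent of both $j$ and $n$. The rest is routine convexity of $g$ and geometric-series bookkeeping.
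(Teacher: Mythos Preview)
Your proof is correct. Both you and the paper exploit the log-convexity of $j\mapsto j\log j+(n-j)\log(n-j)$, but the argument is organised differently.

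The paper splits at a \emph{fixed} integer $J=\lceil\max(\gamma/\alpha,\,R+1/\alpha)\rceil$ rather than at $\varepsilon n$. With $f(x)=x^{\alpha x}\beta^{x}x^{\gamma}$, the product $f(j)f(n-j)$ is log-convex and symmetric about $n/2$ on $[J,n-J]$, so its maximum over that whole range is the endpoint value $f(J)f(n-J)$; the specific choice of $J\ge R+1/\alpha$ makes this endpoint value $\bigO(a_{n-R}/n)$, and multiplying by at most $n$ terms gives $\bigO(a_{n-R})$ directly. The residual tails $[R,J-1]$ and $[n-J+1,n-R]$ contain a bounded number of terms independent of $n$, each trivially $\bigO(a_{n-R})$.

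Your careful ratio estimate $b_{j+1}/b_j\le C_\gamma\bigl(\tfrac{j+1}{n-j-1}\bigr)^{\alpha}$, with its tracked $e^{\pm\alpha}$ cancellation, is not needed once one observes that log-convexity already yields the \emph{uniform} pointwise bound $b_j\le b_J$ on all of $[J,n-J]$; you are rederiving that decay locally and then summing a geometric series. What your route gains is an explicit quantitative decay $b_j\le 2^{-(j-R)}b_R$ near the endpoints; what the paper's route gains is brevity---no $\varepsilon$, no entropy function $H(\varepsilon)$, and no delicate ratio computation, the whole proof fitting in a handful of lines.
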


\begin{proof}
For any fixed $j$, as $n$ tends to infinity, we have
\begin{equation}
\label{eq:anj}
    a_{n-j}
    =
    \exactbigO \left(
        (n-j)^{\alpha (n-j)}
        \beta^{n-j}
        (n-j)^{\gamma}
    \right)
    =
    \exactbigO \left(
        n^{\alpha n}
        \beta^n
        n^{\gamma}
        n^{- \alpha j}
    \right)
    =
    \exactbigO(a_n n^{-\alpha j}).
\end{equation}
Define the function
\[
    f(x) = x^{\alpha x} \beta^x x^{\gamma},
\]
then
\begin{align*}
    \log(f(x)) &=
    \alpha x \log(x) + x \log(\beta) + \gamma \log(x),
\\
    \partial \log(f(x)) &=
    \alpha \log(x) + \alpha + \log(\beta) + \gamma x^{-1},
\\
    \partial^2 \log(f(x)) &=
    \alpha x^{-1} - \gamma x^{-2}.
\end{align*}
The second derivative is positive for $x > \frac{\gamma}{\alpha}$.
Thus $f(j)$ is log-convex on $j \geq \frac{\gamma}{\alpha}$.
By symmetry, so is $j \mapsto f(n-j)$ on $j \leq n - \frac{\gamma}{\alpha}$
and by product,
$j \mapsto f(j) f(n-j)$ is log-convex
on $j \in \left[ \frac{\gamma}{\alpha}, n - \frac{\gamma}{\alpha} \right]$
and its minimum is reached at $j = n/2$.
Set $J = \lceil \max \left( \frac{\gamma}{\alpha}, R + \frac{1}{\alpha} \right) \rceil$.
\cref{eq:anj} then ensures
\[
    f(n-J)
    =
    \bigO(f(n) n^{- \alpha (R + 1 / \alpha)})
    =
    \bigO(a_{n-R} n^{-1})
\]
and, by log-convexity, for any $j \in [J, n - J]$,
\begin{equation}
\label{eq:ajnj}
    f(j) f(n-j)
    \leq f(J) f(n - J)
    = \bigO(a_{n-R} n^{-1}).
\end{equation}
Since $a_n = \exactbigO(f(n))$,
there exists $C > 0$ such that $a_j \leq C f(j)$ for all $j$.
Let us decompose the sum as follows
\[
    \sum_{j = R}^{n-R}
    a_j a_{n-j}
    \leq
    2 \sum_{j = R}^{J - 1}
    a_j a_{n-j}
    + C^2
    \sum_{j = J}^{n - J}
    f(j) f(n-j).
\]
By \cref{eq:anj},
each of the terms of the first sum is a $\bigO(a_{n-R})$
and \cref{eq:ajnj} implies that the second sum
is bounded by $n \bigO(a_{n-R} n^{-1}) = \bigO(a_{n-R})$,
which concludes the proof.
\end{proof}

\begin{lemma*}[\ref{th:an_j}]
Consider $\alpha \in \integers_{> 0}$,
$\beta \in \reals_{> 0}$
and $\gamma \in \reals$,
and a sequence $(a_n)_n$
with asymptotic expansion
\[
    a_n \approx
    n^{\alpha n}
    \beta^n
    n^{\gamma}
    \tA(n^{-1})
\]
for some nonzero formal power series $\tA(z)$.
Define the formal power series
\[
    \tA_j(z) =
    e^{- \alpha j}
    \beta^{-j}
    z^{\alpha j}
    (1 - j z)^{\gamma - \alpha j}
    e^{\alpha z^{-1} (\log(1 - j z) + j z)}
    \tA \left( \frac{z}{1 - j z} \right).
\]
Then for any fixed $j \in \integers_{\geq 0}$,
we have as $n$ tends to infinity
\[
    a_{n-j} \approx
    n^{\alpha n}
    \beta^n
    n^{\gamma}
    \tA_j(n^{-1}).
\]
\end{lemma*}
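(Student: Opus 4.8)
The statement is essentially a computation: starting from the asymptotic expansion $a_n \approx n^{\alpha n}\beta^n n^\gamma \tA(n^{-1})$, I want to substitute $n \mapsto n-j$ and re-expand everything in powers of $n^{-1}$ (to arbitrary order), collecting the result into the single formal power series $\tA_j(z)$. The plan is to write $a_{n-j}$ as $(n-j)^{\alpha(n-j)}\beta^{n-j}(n-j)^\gamma \tA((n-j)^{-1})\,(1+o((n-j)^{-r}))$ for each fixed $r$, and then rewrite each of the four factors depending on $n-j$ in terms of $n^{-1}$, keeping track of which are polynomially accurate and which contribute honest factors.

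\textbf{Key steps, in order.} First, factor out $n^{\alpha n}\beta^n n^\gamma$ from $a_{n-j}$ and analyze the remaining factors one at a time. The easy factors are $\beta^{n-j}/\beta^n = \beta^{-j}$ (exact, constant); $(n-j)^\gamma/n^\gamma = (1 - j/n)^\gamma$, which matches the factor $(1-jz)^\gamma$ at $z = n^{-1}$; and $\tA((n-j)^{-1}) = \tA\!\big(\tfrac{n^{-1}}{1 - j n^{-1}}\big)$, matching $\tA\!\big(\tfrac{z}{1-jz}\big)$ — here one must note that $\frac{z}{1-jz}$ is a well-defined power series in $z$ with zero constant term, so composition with $\tA$ is legitimate as a formal power series, and that truncating $\tA$ at order $r$ introduces only an $O(n^{-r})$ error. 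The delicate factor is $(n-j)^{\alpha(n-j)}/n^{\alpha n}$: take logarithms to get $\alpha(n-j)\log(n-j) - \alpha n\log n = \alpha(n-j)\log(1 - j/n) - \alpha j \log n$. The $-\alpha j\log n$ term gives $n^{-\alpha j}$, matching $z^{\alpha j}$; so I would pull $z^{\alpha j}$ out and also move the $-\alpha j$-th power of $(1-jz)$ out of the remaining factor, since $(n-j)^{\alpha(n-j)} = (n-j)^{\alpha n} (n-j)^{-\alpha j}$ and $(n-j)^{-\alpha j} = n^{-\alpha j}(1-j/n)^{-\alpha j}$. What is left to handle is $\alpha n \log(1 - j/n) = \alpha n(\log(1-jn^{-1}))$; writing $w = n^{-1}$ this is $\alpha w^{-1}\log(1 - jw)$, which is \emph{not} a power series in $w$ (it has a $-\alpha j w^{-1}$ divergent part from the linear term of $\log$). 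But that divergent part is exactly cancelled: the quantity $\alpha w^{-1}(\log(1-jw) + jw)$ \emph{is} a genuine power series in $w$ (valuation $\geq 1$), hence $e^{\alpha w^{-1}(\log(1-jw)+jw)}$ is a well-defined formal power series; and the leftover $e^{-\alpha j w^{-1}\cdot wj \cdot w^{-1}}$... more carefully, $\alpha w^{-1}\log(1-jw) = \alpha w^{-1}(\log(1-jw)+jw) - \alpha j$, so $e^{\alpha n \log(1-j/n)} = e^{-\alpha j}\, e^{\alpha w^{-1}(\log(1-jw)+jw)}$ at $w = n^{-1}$, which supplies precisely the factor $e^{-\alpha j} e^{\alpha z^{-1}(\log(1-jz)+jz)}$ in $\tA_j$.

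\textbf{Assembling.} Multiplying the five contributions — $\beta^{-j}$, $z^{\alpha j}$, $(1-jz)^{\gamma}$, $(1-jz)^{-\alpha j}$, $e^{-\alpha j}e^{\alpha z^{-1}(\log(1-jz)+jz)}$, and $\tA(z/(1-jz))$ — reproduces the claimed $\tA_j(z)$, with the two powers of $(1-jz)$ combining to $(1-jz)^{\gamma - \alpha j}$. The only remaining care is the error analysis: for a fixed target order $r$, I must verify that each approximation step (Taylor-truncating $\log(1-jn^{-1})$ inside the exponent, truncating $\tA$, expanding $(1-jn^{-1})^{\gamma-\alpha j}$, and replacing $a_{n-j}$ by its order-$r'$ asymptotic expansion with $r'$ chosen large enough) introduces only an error that is $O(n^{-r})$ relative to the main term $n^{\alpha n}\beta^n n^\gamma$ — this works because the potentially problematic exponential factor $e^{\alpha w^{-1}(\log(1-jw)+jw)}$ is, for fixed $j$ and $n$ large, bounded (it tends to a constant as $n\to\infty$), so it does not degrade polynomial error orders.

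\textbf{Main obstacle.} The only genuinely subtle point is the bookkeeping around the factor $(n-j)^{\alpha(n-j)}$: recognizing that $\alpha w^{-1}\log(1-jw)$ must be split as $-\alpha j + \alpha w^{-1}(\log(1-jw)+jw)$ so that the divergent $w^{-1}$ part is isolated into a constant $e^{-\alpha j}$ while the rest is a legitimate formal power series, and then checking that this exponential factor, evaluated at $w = n^{-1}$, converges to a finite limit (so the asymptotic-expansion machinery, in particular the "exponentially small terms are invisible / bounded prefactors preserve error orders" principle from the preliminaries, applies cleanly). Everything else is routine manipulation of formal power series and standard $O$-estimates.
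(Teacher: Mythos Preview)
Your proposal is correct and follows essentially the same approach as the paper's proof, which is a terse three-line computation carrying out exactly the factorization you describe. The paper simply writes $a_{n-j} \approx n^{\alpha n}\beta^n n^{\gamma}\, n^{-\alpha j}(1-j/n)^{\alpha(n-j)+\gamma}\beta^{-j}\tA\!\big(\tfrac{n^{-1}}{1-j/n}\big)$ and then identifies this with $n^{\alpha n}\beta^n n^{\gamma}\tA_j(n^{-1})$; your version is just more explicit about the splitting $\alpha n\log(1-j/n) = -\alpha j + \alpha n(\log(1-j/n)+j/n)$ and the error bookkeeping.
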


\begin{proof}
\begin{align*}
    a_{n-j}
    &\approx
    (n-j)^{\alpha (n-j)}
    \beta^{n-j}
    (n-j)^{\gamma}
    \tA((n-j)^{-1})
\\
    &\approx
    n^{\alpha n}
    \beta^n
    n^{\gamma}
    n^{- \alpha j}
    \left( 1 - \frac{j}{n} \right)^{\alpha (n-j) + \gamma}
    \beta^{-j}
    \tA \left( \frac{1}{n} \frac{1}{1 - j/n} \right)
\\
    &\approx
    n^{\alpha n}
    \beta^n
    n^{\gamma}
    \tA_j(n^{-1}).
\end{align*}
\end{proof}

\begin{proposition*}[\ref{th:HA}]
Consider a function $H(z)$ analytic at $0$
and a formal power series
\[
    A(z) = \sum_{n > 0} a_n z^n
\]
whose coefficients satisfy
\[
    a_n \approx
    n^{\alpha n}
    \beta^n
    n^{\gamma}
    \tA(n^{-1})
\]
for some $\alpha \in \integers_{> 0}$,
$\beta \in \reals_{> 0}$,
$\gamma \in \reals$
and nonzero formal power series $\tA(z)$,
then
\[
    [z^n] H(A(z)) \approx
    n^{\alpha n}
    \beta^n
    n^{\gamma}
    \tA_H(n^{-1})
\]
where the formal power series $\tA_H(z)$
is defined as
\begin{align*}
    \tA_H(z) &=
    \sum_{j \geq 0}
    \tA_j(z)
    [x^j] H'(A(x)),
\\
    \tA_j(z) &=
    e^{- \alpha j}
    \beta^{-j}
    z^{\alpha j}
    (1 - j z)^{\gamma - \alpha j}
    e^{\alpha z^{-1} (\log(1 - j z) + j z)}
    \tA \left( \frac{z}{1 - j z} \right).
\end{align*}
\end{proposition*}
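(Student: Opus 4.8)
The plan is to verify the hypotheses of \cref{th:divergent:series} for the series $A(z)$ and then combine it with \cref{th:an_j}. Let $v$ be the valuation of $\tA$ and $f_v=[z^v]\tA(z)\neq 0$. From $a_n\approx n^{\alpha n}\beta^n n^{\gamma}\tA(n^{-1})$ we get $a_n=f_v\,n^{\alpha n}\beta^n n^{\gamma-v}(1+\smallo(1))$, so $a_n\neq 0$ for $n$ large, $|a_n|=\exactbigO(n^{\alpha n}\beta^n n^{\gamma-v})$, and, since $\alpha\geq 1$, $|a_{n-1}|/|a_n|\to 0$, i.e.\ $a_{n-1}=\smallo(a_n)$. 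The series $H'(A(z))$ is a well-defined formal power series because $A(0)=0$ and $H'$ is analytic at $0$, so the coefficients $c_j=[x^j]H'(A(x))$ make sense.

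For the remaining hypothesis, I would prove $\sum_{j=R}^{n-R}a_j a_{n-j}=\bigO(a_{n-R})$ for every fixed $R$ by applying \cref{th:alpha:beta:gamma} to a positive sequence $(b_n)$ that equals $|a_n|$ once $n$ is large and is $\exactbigO(n^{\alpha n}\beta^n n^{\gamma-v})$ for the finitely many small $n$. This yields $\sum_{j=R}^{n-R}b_j b_{n-j}=\bigO(b_{n-R})$; since two coefficients with both indices large have product $a_j a_{n-j}=b_j b_{n-j}$ (the signs cancel), and the finitely many boundary terms are each $\bigO(a_{n-R})$ by the elementary estimate $a_{n-j}=\exactbigO(a_n n^{-\alpha j})$ together with the eventual monotonicity of $m\mapsto m^{\alpha m}\beta^m m^{\gamma-v}$, the claimed bound follows. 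Thus \cref{th:divergent:series} applies and gives, for every fixed $R$,
\[
    [z^n]H(A(z))=\sum_{j=0}^{R-1}c_j\,a_{n-j}+\bigO(a_{n-R}).
\]

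To conclude, fix a target order $r$ and choose $R$ with $\alpha R\geq r$. By \cref{th:an_j}, $a_{n-j}\approx n^{\alpha n}\beta^n n^{\gamma}\tA_j(n^{-1})$ for each $j$; since $\tA_R(z)$ has valuation at least $\alpha R\geq r$, the remainder above is $n^{\alpha n}\beta^n n^{\gamma}\bigO(n^{-r})$. Substituting the order-$r$ expansion of each $a_{n-j}$ into the finite sum gives
\[
    [z^n]H(A(z))=n^{\alpha n}\beta^n n^{\gamma}\Bigg(\sum_{\ell=0}^{r-1}\bigg(\sum_{j=0}^{R-1}c_j\,[z^\ell]\tA_j(z)\bigg)n^{-\ell}+\bigO(n^{-r})\Bigg).
\]
Because $\tA_j(z)$ has valuation at least $\alpha j$, for $\ell<\alpha R$ the inner sum coincides with $\sum_{j\geq 0}c_j[z^\ell]\tA_j(z)=[z^\ell]\tA_H(z)$; the same observation shows each coefficient of $\tA_H$ is a finite sum, so $\tA_H$ is a well-defined formal power series. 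This is precisely the asserted asymptotic expansion.

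I expect the only real obstacle to be the convolution bound of the second paragraph, i.e.\ arranging to invoke \cref{th:alpha:beta:gamma} when only the full asymptotic expansion of $a_n$ — not a two-sided $\exactbigO$ estimate valid for every $n$ — is available; the rest is bookkeeping about valuations and truncation orders.
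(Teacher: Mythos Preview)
Your proof is correct and follows exactly the same route as the paper, whose proof is the single line ``Direct injection of \cref{th:alpha:beta:gamma} and \cref{th:an_j} into \cref{th:divergent:series}.'' You supply the bookkeeping the paper omits---in particular the care about the valuation of $\tA$, the adjustment needed so that \cref{th:alpha:beta:gamma} applies when the two-sided estimate holds only for large $n$, and the verification that the valuations $\alpha j$ of the $\tA_j$ make $\tA_H$ well defined and allow the truncation argument to close.
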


\begin{proof}
Direct injection of \cref{th:alpha:beta:gamma}
and \cref{th:an_j} into
\cref{th:divergent:series}.
\end{proof}

\begin{corollary*}[\ref{th:divergent:even}]
Consider a function $H(z)$ analytic at $0$
and a formal power series
\[
    B(z) = \sum_{n > 0} b_n z^n
\]
whose coefficients satisfy $b_n = 0$
for all odd $n$, and for $n$ even
\[
    b_n \approx
    n^{\alpha n}
    \beta^n
    n^{\gamma}
    \tB(n^{-1})
\]
for some $\alpha \in \frac{1}{2} \integers_{> 0}$,
$\beta \in \reals_{> 0}$,
$\gamma \in \reals$
and nonzero formal power series $\tB(z)$.
Then for $n$ even
\[
    [z^n] H(B(z)) \approx
    n^{\alpha n}
    \beta^n
    n^{\gamma}
    \tB_H(n^{-1})
\]
where the formal power series $\tB_H(z)$
is defined as
\begin{align*}
    \tB_H(z) &=
    \sum_{j \geq 0}
    \tB_{2j}(z)
    [x^{2j}] H'(B(x)),
\\
    \tB_j(z) &=
    e^{- \alpha j}
    \beta^{-j}
    z^{\alpha j}
    (1 - j z)^{\gamma - \alpha j}
    e^{\alpha z^{-1} (\log(1 - j z) + j z)}
    \tB \left( \frac{z}{1 - j z} \right).
\end{align*}
\end{corollary*}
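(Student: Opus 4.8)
The plan is to reduce to \cref{th:HA} via the substitution $w = z^2$. Since $b_n = 0$ for every odd $n$, the series $B(z)$ is even, so there is a unique formal power series $C(w) = \sum_{m>0} c_m w^m$ with $c_m = b_{2m}$ and $B(z) = C(z^2)$; consequently $H(B(z)) = H(C(z^2))$ and, for even $n = 2m$,
\[
    [z^n] H(B(z)) = [w^m] H(C(w)).
\]
(The odd coefficients of $H(B(z))$ vanish, which is why the statement only concerns even $n$.) First I would transfer the asymptotic expansion of $(b_n)$ to one for $(c_m)$: writing $n = 2m$ in $b_n \approx n^{\alpha n}\beta^n n^{\gamma}\tB(n^{-1})$ and collecting the powers of $2$ and $4$ gives
\[
    c_m \approx m^{\alpha' m}(\beta')^m m^{\gamma'}\,\tilde{C}(m^{-1}),
    \qquad
    \alpha' = 2\alpha,\quad \beta' = 4^{\alpha}\beta^2,\quad \gamma' = \gamma,\quad \tilde{C}(z) = 2^{\gamma}\tB(z/2),
\]
where the point of the hypothesis $\alpha \in \tfrac12\integers_{>0}$ is exactly that $\alpha' = 2\alpha \in \integers_{>0}$, and $\tilde{C}$ is nonzero because $\tB$ is, so \cref{th:HA} applies to $C$ and $H$. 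This yields $[w^m] H(C(w)) \approx m^{\alpha' m}(\beta')^m m^{\gamma'}\tilde{C}_H(m^{-1})$ with $\tilde{C}_H$, $\tilde{C}_j$ given by the formulas of \cref{th:HA}.

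Next I would substitute $m = n/2$ back. A short computation shows $m^{\alpha' m}(\beta')^m m^{\gamma'} = 2^{-\gamma}\, n^{\alpha n}\beta^n n^{\gamma}$, while $\tilde{C}_H(m^{-1})$, read as a series in $z = n^{-1}$, becomes $\tilde{C}_H(2z)$. It then remains to verify the bookkeeping identity $2^{-\gamma}\tilde{C}_H(2z) = \tB_H(z)$. This splits into three elementary checks: (i) plugging $\alpha' = 2\alpha$, $\beta' = 4^{\alpha}\beta^2$, $\gamma' = \gamma$, $\tilde{C}(z) = 2^{\gamma}\tB(z/2)$ into the definition of $\tilde{C}_j$ and simplifying the powers of $2$ and $4$ gives $\tilde{C}_j(2z) = 2^{\gamma}\tB_{2j}(z)$; (ii) since $H'(C(x)) = H'(B(\sqrt{x}))$ and $H'(B(z))$ is even, one has $[x^j] H'(C(x)) = [x^{2j}] H'(B(x))$; (iii) combining (i) and (ii) term by term in $\tilde{C}_H(z) = \sum_{j\ge 0}\tilde{C}_j(z)[x^j]H'(C(x))$ produces exactly $\tB_H(z) = \sum_{j\ge 0}\tB_{2j}(z)[x^{2j}]H'(B(x))$.

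The main (and essentially only non-mechanical) point is making the substitution $w = z^2$ rigorous at the level of asymptotic expansions: one must check that ``$b_n \approx n^{\alpha n}\beta^n n^{\gamma}\tB(n^{-1})$ for even $n$'' really restricts to a valid asymptotic expansion of $c_m = b_{2m}$ with respect to the scale $\big(m^{\alpha' m}(\beta')^m m^{\gamma'} m^{-\ell}\big)_{\ell \ge 0}$ — each truncation of order $r$ of the former yields a truncation of order $r$ of the latter, with remainder $\bigO((2m)^{-r}) = \bigO(m^{-r})$ — and that the side hypotheses needed to run \cref{th:HA} (hence \cref{th:divergent:series} through \cref{th:alpha:beta:gamma,th:an_j}), namely that $c_m$ is eventually nonzero and $c_{m-1} = \smallo(c_m)$, follow from this expansion of $(c_m)$ just as they do for $(a_n)$ inside the proof of \cref{th:HA}.
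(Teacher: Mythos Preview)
Your proposal is correct and follows essentially the same route as the paper: define $c_m = b_{2m}$ (the paper's $a_n$), rewrite the expansion with parameters $(2\alpha, 4^{\alpha}\beta^2, \gamma)$ and amplitude $2^{\gamma}\tB(z/2)$, apply \cref{th:HA}, then undo the substitution and match $\tilde{C}_j(2z)$ with $\tB_{2j}(z)$ and $[x^j]H'(C(x))$ with $[x^{2j}]H'(B(x))$. The only difference is notational.
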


\begin{proof}
Set $a_n = b_{2n}$, then
\[
    a_n \approx
    n^{2 \alpha n}
    (2^{2 \alpha} \beta^2)^n
    n^{\gamma}
    2^{\gamma}
    \tB \left( \frac{n^{-1}}{2} \right).
\]
The associated generating function is
\[
    A(z) = \sum_{n > 0} a_n z^n = B(\sqrt{z}).
\]
Observe that $2 \alpha \in \integers_{> 0}$
and $H(B(z))$ is a formal power series in $z^2$:
its only nonzero coefficients have even indices.
Applying \cref{th:HA} yields
\[
    [z^n] H(A(z)) \approx
    n^{2 \alpha n}
    (2^{2 \alpha} \beta^2)^n
    n^{\gamma}
    2^{\gamma}
    \tA_H(n^{-1})
\]
where
\begin{align*}
    \tA_H(z) &=
    \sum_{j \geq 0}
    \tA_j(z)
    [x^j] H'(A(x)),
\\
    \tA_j(z) &= %{2j}(z/2)
    e^{- 2 \alpha j}
    (2^{2 \alpha} \beta^2)^{-j}
    z^{2 \alpha j}
    (1 - j z)^{\gamma - 2 \alpha j}
    e^{2 \alpha z^{-1} (\log(1 - j z) + j z)}
    \tB \left( \frac{1}{2} \frac{z}{1 - j z} \right).
\end{align*}
We define
\[
    \tB_j(z) =
    e^{- \alpha j}
    \beta^{-j}
    z^{\alpha j}
    (1 - j z)^{\gamma - \alpha j}
    e^{2 \alpha z^{-1} (\log(1 - j z) + j z)}
    \tB \left( \frac{z}{1 - j z} \right)
\]
and observe
\[
    \tB_{2j}(z/2) = \tA_j(z).
\]
For $n$ even
\[
    [z^n] H(B(z)) =
    [z^{n/2}] H(A(z))
    \approx
    n^{\alpha n}
    \beta^n
    n^{\gamma}
    \tA_H(2 n^{-1}).
\]
Finally, we replace $\tA_H(2z)$ with
\[
    \tA_H(2z)
    =
    \sum_{j \geq 0}
    \tA_j(2z)
    [x^j] H'(A(x))
    =
    \sum_{j \geq 0}
    \tB_{2j}(z)
    [x^{2j}] H'(B(x)).
\]
\end{proof}

    \subsubsection{Proofs of \cref{sec:asympt:csgkn}}

\begin{theorem*}[\ref{th:csgkn}]
For any $k \geq 3$,
the number $\csgkn$ of connected $k$-regular graphs
on $n$ vertices has asymptotic expansion
\[
    \csgkn \approx
    \frac{(nk/e)^{nk/2}}{k!^n}
    \frac{e^{-(k^2-1)/4}}{\sqrt{2}}
    \tcsgk(n^{-1})
\]
where the formal power series $\tcsgk(z)$
is computed using the following equations
\begin{align*}
    \psi(t) &=
    \left( \frac{t - \log(1+t)}{t^2/2} \right)^{-1/2},
\\
    T(x) &= x \psi(T(x)),
\\
    \tS(z) &= e^{z x^2/2} \odot_{x=1} T'(x),
\\
    f_{k,j}(z) &=
    \begin{cases}
        1 & \text{if } j = 0,
        \\
        \sum_{\ell \geq 3}
        \indic_{k = \ell}
        \indic_{j \, \ell \text{ even}}
        z^{(\ell / 2 - 1) j}
        & \text{if } j > 0,
    \end{cases}
\\
    \tA^{(k)}(z) &= \frac{\tsgk(z)}{\tS(z)},
\\
    \tA^{(k)}_j(z) &=
    \left( \frac{k!}{k^{k/2}} \right)^j
    f_{k,j}(z)
    (1 - j z)^{-1/2 - (k/2 - 1) j}
    e^{(k/2 - 1) z^{-1} (\log(1 - j z) + j z)}
    \tA^{(k)} \left( \frac{z}{1 - j z} \right),
\\
    \tcsgk(z) &=
    \tS(z)
    \sum_{j \geq 0}
    \tA^{(k)}_j(z)
    [x^j]
    \frac{1}{\sgk(x)}.
\end{align*}
\end{theorem*}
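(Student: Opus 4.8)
The plan is to regard $\csgk(z)$ as the image of $\sgk(z)-1$ under the function $H(w)=\log(1+w)$, which is analytic at $0$, and then to feed the asymptotic expansion of $\sgkn$ from \cref{th:main:result} into \cref{th:HA} (when $k$ is even) or into \cref{th:divergent:even} (when $k$ is odd). Concretely, set $A(z)=\sgk(z)-1=\sum_{n\geq 1}\frac{\sgkn}{n!}z^n$. Since the constant term of $\sgk(z)$ is $1$, we have $A(0)=0$, so $H(A(z))$ is a well-defined formal power series, and \cref{eq:csg:log} becomes $\csgk(z)=H(A(z))$ with $H'(A(z))=\frac{1}{\sgk(z)}$.

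First I would convert the asymptotic expansion of $\sgkn$ into one for $a_n:=\sgkn/n!$. Combining \cref{th:main:result} with the Stirling expansion $n!\approx n^n e^{-n}\sqrt{2\pi n}\,\tS(n^{-1})$ established in \cref{sec:laplace} — a legitimate division since $\tS(0)=1$ — gives
\[
    a_n\approx n^{\alpha n}\beta^n n^{\gamma}\,\tA(n^{-1}),
    \qquad
    \alpha=\tfrac{k}{2}-1,
    \quad
    \beta=\frac{k^{k/2}}{e^{k/2-1}\,k!},
    \quad
    \gamma=-\tfrac12,
\]
where $\tA(z)=\frac{e^{-(k^2-1)/4}}{2\sqrt\pi}\,\tA^{(k)}(z)$ and $\tA^{(k)}(z)=\tsgk(z)/\tS(z)$, a nonzero series since $\tsgk(0)=2$. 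The two arithmetic identities driving the computation here and below are $n^n e^{-n}n^{\alpha n}\beta^n=(nk/e)^{nk/2}/k!^n$ and $\sqrt{2\pi n}\,n^{\gamma}=\sqrt{2\pi}$.

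Next I would split on the parity of $k$. If $k$ is even, then $k\geq 4$, $\alpha\in\integers_{>0}$, and $a_n>0$ for all large $n$, so \cref{th:HA} applies directly and gives $\csgkn/n!\approx n^{\alpha n}\beta^n n^{\gamma}\,\tA_H(n^{-1})$ with $\tA_H(z)=\sum_{j\geq 0}\tA_j(z)\,[x^j]\frac{1}{\sgk(x)}$ and $\tA_j$ as in \cref{th:HA}. If $k$ is odd, then $k\geq 3$, $\alpha\in\frac12\integers_{>0}$, and $a_n=0$ for odd $n$ (because $\sgkn=0$ when $nk$ is odd), so \cref{th:divergent:even} applies and yields the same formula with the sum restricted to even $j$; since $\sgk(x)$ is then an even series, $[x^j]\frac{1}{\sgk(x)}=0$ for odd $j$ as well, so the two cases may be written as one sum over $j\geq 0$. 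In both cases I would then multiply back by $n!\approx n^n e^{-n}\sqrt{2\pi n}\,\tS(n^{-1})$, use the two identities above to obtain $\csgkn\approx\frac{(nk/e)^{nk/2}}{k!^n}\sqrt{2\pi}\,\tS(n^{-1})\tA_H(n^{-1})$, and finally simplify the constants: $e^{-\alpha j}\beta^{-j}=(k!/k^{k/2})^j$; the factor $z^{\alpha j}$ together with the parity restriction for odd $k$ is exactly $f_{k,j}(z)$; the exponents $\gamma-\alpha j=-\frac12-(\frac{k}{2}-1)j$ and the weight $e^{\alpha z^{-1}(\log(1-jz)+jz)}$ match the definition of $\tA^{(k)}_j(z)$; and the stray factor $\frac{e^{-(k^2-1)/4}}{2\sqrt\pi}$ carried by $\tA$ cancels against the $\frac{2\sqrt\pi}{e^{-(k^2-1)/4}}$ produced when renormalising to $\frac{(nk/e)^{nk/2}}{k!^n}\frac{e^{-(k^2-1)/4}}{\sqrt2}$. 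What remains is exactly $\tcsgk(z)=\tS(z)\sum_{j\geq 0}\tA^{(k)}_j(z)\,[x^j]\frac{1}{\sgk(x)}$, and I would also note that this is a well-defined formal power series, since $\tA^{(k)}_j(z)$ has valuation equal to that of $f_{k,j}(z)$, which is at least $j/2$ when $k\geq 3$, so each of its coefficients is a finite sum.

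I expect the main obstacle to be bookkeeping rather than conceptual: one must track every constant and every half-integer power of $n$ coming from Stirling's formula and from the normalisation in \cref{th:main:result}, confirm that they collapse precisely into the stated closed form, and verify that the even/odd subtleties for odd $k$ are correctly absorbed by $f_{k,j}(z)$ (and by the vanishing of $[x^j]\frac{1}{\sgk(x)}$ at odd $j$), so that the single displayed formula covers both parities simultaneously.
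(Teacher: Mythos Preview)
Your proposal is correct and follows essentially the same route as the paper: set $H(w)=\log(1+w)$ and $A(z)=\sgk(z)-1$, divide the expansion of $\sgkn$ by Stirling's expansion to get $a_n\approx n^{\alpha n}\beta^n n^{\gamma}\tA(n^{-1})$ with the same $\alpha,\beta,\gamma$, apply \cref{th:HA} for $k$ even and \cref{th:divergent:even} for $k$ odd, then multiply back by $n!$ and repackage the constants into $f_{k,j}$ and $\tA^{(k)}_j$. Your additional remark that for odd $k$ the odd-$j$ coefficients $[x^j]\frac{1}{\sgk(x)}$ also vanish is a nice redundancy check, though the paper handles the unification solely via the indicator inside $f_{k,j}$.
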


\begin{proof}
Define
\[
    H(z) = \log(1 + z),
    \qquad
    A^{(k)}(z) = \sg^{(k)}(z) - 1.
\]
Then, as we saw at the beginning of this section,
\[
    \csg^{(k)}(z) = H(A^{(k)}(z)).
\]
When $k$ is even, for all sufficiently large $n$,
we have $\sgkn > 0$,
so the $n$th coefficients of $A^{(k)}(z)$ is nonzero,
as required by \cref{th:divergent:series}.
If $k$ is odd, $\sgkn = 0$ for all odd $n$
and $\sgkn > 0$ for all large enough even $n$,
as required by \cref{th:divergent:even}.
We have for all $n > 0$
\[
    a^{(k)}_n := [z^n] A^{(k)}(z)
    =
    [z^n] (\sg^{(k)}(z) - 1)
    =
    \frac{\sgkn}{n!}.
\]
We recalled Stirling asymptotic expansion
at the end of \cref{sec:laplace}.
Defining the formal power series
\[
    \psi(t) = \left( \frac{t - \log(1 + t)}{t^2/2} \right)^{-1/2},
    \quad
    T(x) = x \psi(T(x)),
    \quad \text{and} \quad
    \tS(z) =
    e^{z x^2/2} \odot_{x=1} T'(x),
\]
we have
\[
    n! \approx
    n^n e^{-n} \sqrt{2 \pi n} \tS(n^{-1}).
\]
Combined with the asymptotic expansion of $\sgkn$ from \cref{th:main:result},
we deduce, for $k$ fixed and $k n$ even,
\begin{align*}
    a^{(k)}_n
    &\approx
    \frac{1}{n^n e^{-n} \sqrt{2 \pi n} \tS(n^{-1})}
    \frac{(n k / e)^{n k / 2}}{k!^n}
    \frac{e^{-(k^2-1)/4}}{\sqrt{2}}
    \tsgk(n^{-1})
\\
    &\approx
    n^{n (k/2 - 1)}
    \left(
        \frac{e (k/e)^{k/2}}{k!}
    \right)^n
    n^{-1/2}
    \frac{e^{-(k^2-1)/4}}{2 \sqrt{\pi}}
    \frac{\tsgk(n^{-1})}{\tS(n^{-1})}
\end{align*}
We define
\[
    \alpha = \frac{k}{2} - 1,
    \quad
    \beta = \frac{e (k/e)^{k/2}}{k!},
    \quad
    \gamma = -\frac{1}{2},
    \quad
    \tA^{(k)}(z) =
    \frac{\tsgk(z)}{\tS(z)},
\]
so for $k n$ even
\[
    a^{(k)}_n \approx
    \frac{e^{-(k^2-1)/4}}{2 \sqrt{\pi}}
    n^{\alpha n}
    \beta^n
    n^{\gamma}
    \tA^{(k)}(n^{-1}).
\]
The assumption $k \geq 3$ ensures $\alpha > 0$.
For $k$ even, $\alpha$ is a positive integer,
while for $k$ odd, $\alpha \in \frac{1}{2} \integers_{>0}$.
Following \cref{th:HA,th:divergent:even}, we define
\begin{equation}
\label{eq:csg:tAkj}
    \tA^{(k)}_j(z) =
    e^{-\alpha j}
    \beta^{-j}
    \indic_{k j \text{ is even}}
    z^{\alpha j}
    (1 - j z)^{\gamma - \alpha j}
    e^{\alpha z^{-1} (\log(1 - j z) + j z)}
    \tA^{(k)} \left( \frac{z}{1 - j z} \right).
\end{equation}
The factor $\indic_{k j \text{ is even}}$
is always $1$ when $k$ is even
(case of application of \cref{th:HA}),
but is $0$ if $k$ and $j$ are odd
(case of application of \cref{th:divergent:even}).
We obtain
\begin{align*}
    \csg^{(k)}_n
    &=
    n! [z^n] \csg^{(k)}(z)
\\
    &=
    n! [z^n] \log(\sg^{(k)}(z))
\\
    &=
    n! [z^n] H(A^{(k)}(z))
\\
    &\approx
    n!
    \frac{e^{-(k^2-1)/4}}{2 \sqrt{\pi}}
    n^{\alpha n}
    \beta^n
    n^{\gamma}
    \tA^{(k)}_H(n^{-1})
\end{align*}
where the formal power series $\tA^{(k)}_H(z)$ is
\[
    \tA^{(k)}_H(z) =
    \sum_{j \geq 0}
    \tA^{(k)}_j(z)
    [x^j]
    \frac{1}{\sg^{(k)}(x)}.
\]
Defining
\[
    \tcsgk(z) =
    \tS(z) \tA^{(k)}_H(z),
\]
we conclude, after replacing $n!$ with its asymptotic expansion,
\[
    \csg^{(k)}_n
    \approx
    \frac{(n k / e)^{n k / 2}}{k!^n}
    \frac{e^{-(k^2-1)/4}}{\sqrt{2}}
    \tcsgk(n^{-1}).
\]
To ensure the coefficients of the asymptotic expansion
are functions of a formal $k$,
we inject, in the expression of $\tA^{(k)}_j(z)$
from \cref{eq:csg:tAkj},
\[
    \indic_{k j \text{ is even}} z^{\alpha j} =
    \begin{cases}
        1 & \text{if } j = 0,
        \\
        \sum_{\ell \geq 3}
        \indic_{k = \ell}
        \indic_{j\, \ell \text{ even}}
        z^{(\ell / 2 - 1) j}
        & \text{otherwise}
    \end{cases}
\]
for all $j > 0$
(assuming $k \geq 3$ as usual),
and replace $\alpha$, $\beta$ and $\gamma$ with their values,
to obtain the expression of $\tA^{(k)}_j(z)$
stated in the theorem.
\end{proof}

\begin{theorem*}[\ref{th:link:asympt}]
For any fixed $k \geq 3$,
the numbers $\sgkn$ and $\csgkn$
of $k$-regular graphs and connected $k$-regular graphs
on $n$ vertices are linked by the relation
\[
    \csgkn =
    \sgkn
    \left(1 + \exactbigO(n^{- (k+1)(k-2)/2}) \right).
\]
\end{theorem*}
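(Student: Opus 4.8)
The plan is to derive the statement from the two asymptotic expansions already established, \cref{th:main:result} for $\sgkn$ and \cref{th:csgkn} for $\csgkn$, which share the \emph{same} prefactor $P_n := \frac{(nk/e)^{nk/2}}{k!^n}\frac{e^{-(k^2-1)/4}}{\sqrt 2}$. Writing $m := (k+1)(k-2)/2$, the asserted identity is equivalent to $\csgkn - \sgkn = \exactbigO(\sgkn\, n^{-m})$, and since $\sgkn \approx 2 P_n$ this reduces to showing that the formal power series $D(z) := \tcsgk(z) - \tsgk(z)$ has valuation exactly $m$. Indeed, applying the definition of $\approx$ with $r = m+1$ error terms to both expansions and subtracting then gives $\csgkn - \sgkn = P_n\bigl([z^m]D\cdot n^{-m} + \bigO(n^{-m-1})\bigr)$, which is $\exactbigO(P_n n^{-m})$ precisely when $[z^m]D \ne 0$.

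So the first task is to pin down the valuation of $D(z)$. Starting from the formula for $\tcsgk(z)$ in \cref{th:csgkn} and using that $\sgk_n = 0$ for $1 \le n \le k$ (a simple $k$-regular graph has at least $k+1$ vertices), one gets $[x^j]\sgk(x)^{-1} = 0$ for $1 \le j \le k$ and $[x^0]\sgk(x)^{-1} = 1$, so the sum over $j$ collapses to
\[
    \tcsgk(z) = \tS(z)\tA^{(k)}_0(z) + \sum_{j \ge k+1}\tA^{(k)}_j(z)\,[x^j]\sgk(x)^{-1}.
\]
Since $f_{k,0}(z) = 1$, the $j=0$ term equals $\tS(z)\tA^{(k)}(z) = \tsgk(z)$, so $D(z) = \sum_{j \ge k+1}\tA^{(k)}_j(z)\,[x^j]\sgk(x)^{-1}$. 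For each $j \ge k+1$, the three extra factors in the definition of $\tA^{(k)}_j(z)$ have nonzero constant term (recall $\tA^{(k)}(0) = \tsgk(0)/\tS(0) = 2/1 = 2$), so $\tA^{(k)}_j(z)$ has the same valuation as $f_{k,j}(z)$, namely $(k/2-1)j$ when $kj$ is even; this equals $m$ for $j=k+1$ and is strictly larger for every bigger $j$ with $kj$ even. Hence only the $j=k+1$ summand reaches order $z^m$, where its coefficient is $\bigl(k!/k^{k/2}\bigr)^{k+1}\tA^{(k)}(0)\,[x^{k+1}]\sgk(x)^{-1} = 2\bigl(k!/k^{k/2}\bigr)^{k+1}\bigl(-\tfrac{\sgk_{k+1}}{(k+1)!}\bigr)$, and this is nonzero since $\sgk_{k+1} = 1$ ($K_{k+1}$ being the unique $k$-regular graph on $k+1$ vertices). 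Thus $[z^m]D \ne 0$ while $[z^j]D = 0$ for $j < m$.

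It then remains to assemble the pieces: from \cref{th:main:result}, $\sgkn = P_n(2 + \bigO(n^{-1})) = \exactbigO(P_n)$, and combining with the display from the first paragraph, $\csgkn - \sgkn = \exactbigO(P_n n^{-m}) = \exactbigO(\sgkn n^{-m})$, i.e. $\csgkn = \sgkn(1 + \exactbigO(n^{-m}))$ whenever $nk$ is even; when $nk$ is odd (which can only happen for odd $k$) both $\sgkn$ and $\csgkn$ vanish and the identity is trivial. Essentially everything here is routine bookkeeping with asymptotic expansions and formal power series; the one step carrying real content is the non-vanishing of $[z^m]D$ — that no cancellation occurs at the leading order of $D(z)$ — since this is exactly what upgrades an $\bigO$ estimate to the claimed $\exactbigO$, and by the computation above it comes down to the single fact $[x^{k+1}]\sgk(x)^{-1} = -1/(k+1)! \ne 0$.
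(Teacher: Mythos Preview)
Your proof is correct and follows essentially the same route as the paper's: split off the $j=0$ term of the sum defining $\tcsgk(z)$ to recover $\tsgk(z)$, use $\sgk_n=0$ for $1\le n\le k$ to kill the terms $1\le j\le k$, and read off the valuation of the remainder from $f_{k,j}(z)$. You are in fact slightly more careful than the paper, since you explicitly verify $[x^{k+1}]\sgk(x)^{-1}=-1/(k+1)!\ne 0$ (via $\sgk_{k+1}=1$ for the complete graph) to justify the $\exactbigO$ rather than merely $\bigO$; note only that your displayed decomposition of $\tcsgk(z)$ drops the overall factor $\tS(z)$ on the sum over $j\ge k+1$ (as does the paper's main text), but since $\tS(0)=1$ this does not affect the valuation or the nonvanishing argument.
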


\begin{proof}
A $k$-regular graph contains either $0$ vertex,
or at least $k+1$ vertices
(the equality case corresponds to the complete graph)
so $\sgkn = 0$ for all $n \in [1, k]$.
This implies for all $j \in [1, k]$
\[
    [x^j] \frac{1}{\sgk(x)} = 0
\]
and thus, using the notation $\tcsgk(z)$
from \cref{th:csgkn},
\[
    \tcsgk(z)
    =
    \sum_{j \geq 0}
    \tA^{(k)}_j(z)
    [x^j] \frac{1}{\sgk(x)}
    =
    \tS(z) \tA^{(k)}_0(z) +
    \sum_{j \geq k+1}
    \tA^{(k)}_j(z)
    [x^j] \frac{1}{\sgk(x)}.
\]
We inject
\[
    \tA^{(k)}_0(z)
    =
    \tA^{(k)}(z)
    =
    \frac{\tsg^{(k)}(z)}{\tS(z)}
\]
so
\[
    \tcsgk(z) =
    \tsg^{(k)}(z) +
    \sum_{j \geq k+1}
    \tA^{(k)}_j(z)
    [x^j] \frac{1}{\sgk(x)}.
\]
The \emph{valuation} of a formal power series
is defined as the smallest index corresponding
to a non-zero coefficient.
For example, the valuation of the polynomial $7 z^2 + z^5$
is $2$.
We seek the valuation of
\[
    \tA^{(k)}_j(z) =
    \left( \frac{k!}{k^{k/2}} \right)^j
    f_{k,j}(z)
    (1 - j z)^{-1/2 - (k/2 - 1) j}
    e^{(k/2 - 1) z^{-1} (\log(1 - j z) + j z)}
    \tA^{(k)} \left( \frac{z}{1 - j z} \right)
\]
for $j \geq k+1$.
We have
\[
    \tA^{(k)}(0) = \frac{\tsgk(0)}{\tS(0)} = 2
\]
of valuation $0$,
and
\[
    (1 - j z)^{-1/2 - (k/2 - 1) j}
    e^{(k/2 - 1) z^{-1} (\log(1 - j z) + j z)}
\]
of valuation $0$ as well,
so the valuation of $\tA^{(k)}_j(z)$
is equal to the valuation of
\[
    f_{k,j}(z) =
    \begin{cases}
        1 & \text{if } j = 0,
        \\
        \sum_{\ell \geq 3}
        \indic_{k = \ell}
        \indic_{j\, \ell \text{ even}}
        z^{(\ell / 2 - 1) j}
        & \text{otherwise.}
    \end{cases}
\]
For a fixed $k \geq 3$,
the valuation of $f_{k,j}(z)$ increases with $j$.
At $j = k+1$, we find
\[
    f_{k,j}(z) =
    z^{(k+1)(k-2)/2}.
\]
Thus, $\tA^{(k)}_{k+1}(z)$ has valuation $(k+1)(k-2)/2$
and for any $j > k+1$, the valuation of $\tA^{(k)}_j(z)$
is greater.
This implies
\[
    \tcsgk(z) =
    \tsg^{(k)}(z)
    + \exactbigO(z^{(k+1)(k-2)/2})
    =
    \tsg^{(k)}(z)
    \left(
    1 + \exactbigO(z^{(k+1)(k-2)/2})
    \right).
\]
By \cref{th:main:result,th:csgkn}, we deduce
for
\begin{align*}
    \sgkn
    &\approx
    \frac{(n k / e)^{n k / 2}}{k!^n}
    \frac{e^{-(k^2-1)/4}}{\sqrt{2}}
    \tsgk(n^{-1})
\\
    \csgkn
    &\approx
    \frac{(n k / e)^{n k / 2}}{k!^n}
    \frac{e^{-(k^2-1)/4}}{\sqrt{2}}
    \tcsgk(n^{-1})
\end{align*}
the link
\[
    \csgkn =
    \sgkn
    \left(
    1 + \exactbigO(n^{-(k+1)(k-2)/2})
    \right).
\]
\end{proof}

    \section{Conclusion}

We derived the asymptotic expansion of $k$-regular graphs
and connected $k$-regular graphs.
The expression we obtained for the error terms,
although explicit, are heavy.
We leave as an open problem the search
for simpler expressions
and a combinatorial interpretation,
possibly requiring a change of asymptotic scale.
%We hope that recent tools extracting
%the asymptotic expansion of coefficients of divergent series
%\cite{Bo16}, \cite{dovgal2023asymptotics}
%will help.

\paragraph{Acknowledgement.}

I thank Mireille Bousquet-M\'elou and Gilles Schaeffer
for encouraging me to explore the distinction
between an asymptotic expansion computable for any $k$,
and an explicit asymptotic expansion for a formal $k$
(see the discussion just before \cref{sec:def})
during the Alea24 meeting.

Fr\'ed\'eric Chyzak helped me improve the paper
with several constructive remarks,
and notably pointed out that
knowing a linear recurrence with polynomial coefficients
does not automatically translates to an asymptotic expansion.

I also thank Nick Wormald and Brendan McKay
for providing several references I had missed
and expressing interest at a previous publication
\cite{caizergues2023exact}.
Their emails encouraged me to build upon this previous paper
to derive the asymptotic expansion of regular graphs.
Brendan McKay also shared the observation that the expression
of the coefficients of the asymptotic expansion could be simplified,
improving the paper. Finally, he provided many terms of the asymptotic expansion of k-regular graphs that he had guessed, helping ensuring the correctness of the paper.

\bibliographystyle{abbrvnat}
\bibliography{biblio}

\end{document}